\setlist[enumerate]{leftmargin=.5in}
\setlist[itemize]{leftmargin=.5in}
\crefname{hypothesis}{Hypothesis}{Hypotheses}
\title{An Augmented Lagrangian Method for Training Recurrent Neural Networks\thanks{Submitted on 28 December 2023.
\funding{This work was supported in part by Hong Kong Research Grants Council C5036-21E and National Science Foundation of China (No. 12171027).}}}
\author{Yue Wang\thanks{Department of Applied Mathematics, Hong Kong Polytechnic University, Hong
        Kong, China (\email{yueyue.wang@connect.polyu.hk}).}
\and Chao Zhang\thanks{School of Mathematics and Statistics, Beijing Jiaotong University, Beijing 100044, China (\email{zc.njtu@163.com}).}
\and Xiaojun Chen\thanks{Department of Applied Mathematics, Hong Kong Polytechnic University,
Hong Kong, China (\email{maxjchen@polyu.edu.hk}).}}
\let\sss= \scriptscriptstyle 
\newcolumntype{R}{>{$}r<{$}} %
\newcolumntype{V}[1]{>{[\;}*{#1}{R@{\;\;}}R<{\;]}} %
\begin{document}
\maketitle

\begin{abstract}
Recurrent Neural Networks (RNNs) are widely used to model sequential data in a wide range of areas, such as natural language processing, speech recognition, machine translation, and time series analysis. In this paper, we model the training process of RNNs with the ReLU activation function as a constrained optimization problem with a smooth nonconvex objective function and piecewise smooth nonconvex constraints.
We prove that any feasible point of the optimization problem satisfies the no nonzero abnormal multiplier constraint qualification (NNAMCQ), and any local minimizer is a Karush-Kuhn-Tucker  (KKT) point of the problem.
Moreover, we propose an augmented Lagrangian method (ALM) and design an efficient block coordinate descent (BCD) method to solve the subproblems of the ALM.
The update of each block of the BCD method has a closed-form solution.
The stop criterion for the inner loop is easy to check and can be stopped in finite steps.
Moreover, we show that the BCD method can generate a directional stationary point of the subproblem.
Furthermore, we establish the global convergence of the ALM to a KKT point of the constrained optimization problem. Compared with the state-of-the-art algorithms, numerical results demonstrate the efficiency and effectiveness of the ALM for training RNNs.
\end{abstract}

\begin{keywords}
recurrent neural network, nonsmooth nonconvex optimization, augmented Lagrangian method, block coordinate descent
\end{keywords}

\begin{MSCcodes}
65K05, 90B10, 90C26, 90C30
\end{MSCcodes}

\section{Introduction}
\label{sec:intro}
Recurrent Neural Networks (RNNs) have been applied in a wide range of areas, such as speech recognition \cite{graves2013speech, lstm2014lsa}, natural language processing \cite{mikolov2010recurrent, sundermeyer2012lstm} and nonlinear time series forecasting \cite{arneric2014garch, mirmirani2004comparison}.
In this paper, we focus on the Elman RNN architecture \cite{elman1990finding}, one of the earliest and most fundamental RNNs, and use Elman RNNs to deal with the regression task with the least squares loss function.

Given input data ${x}_{t} \in \mathbb{R}^{n}$ and output data ${y}_{t} \in \mathbb{R}^{m}$,  $t=1, \ldots, T$,
a widely used minimization problem for training RNNs
 is represented as  (see \cite[pp. 381]{goodfellow2016deep})
\begin{equation}
\label{pb:SAA_rnn}
     \min_{{A}, {W}, {V}, {b}, {c}}  \ \frac{1}{T} \sum_{t=1}^{T} \left\|y_{t}- \bigg({A} \sigma\Big({W}\big(...\sigma({V}{x}_{1}+{b})...\big)+{V}{x}_{t}+{b}\Big)+{c}\bigg)\right\|^2,
\end{equation}
where ${W} \in \mathbb{R}^{r \times r}$, ${V} \in \mathbb{R}^{r \times n}$ and ${A} \in \mathbb{R}^{m \times r}$ are unknown {weight} matrices,  ${b} \in \mathbb{R}^{r}$ and ${c} \in \mathbb{R}^{m}$ are unknown bias vectors, and {$\sigma:\mathbb{R}\to\mathbb{R}$} is a 
nonsmooth  
activation function that {is applied component-wise on vectors and} transforms the previous information and the input data ${x}_{t}$ into the hidden layer at time $t$. The training process by \cref{pb:SAA_rnn} can be interpreted as looking for proper {weight} matrices ${A},$ ${W},$ ${V},$
and  bias vectors ${b},$ ${c}$ in RNNs to minimize the difference between the true value $y_t$ and the output from RNNs across all time steps.
{It is worth mentioning that the Elman RNNs {in (\ref{pb:SAA_rnn})} shares the same weight matrices and bias vectors at different time steps \cite[pp. 374]{goodfellow2016deep}.}

When the traditional backpropagation through time (BPTT) method is used to train RNNs, the highly nonlinear and nonsmooth composition function presented in \cref{pb:SAA_rnn} poses significant challenges.  Gradient descent methods (GDs), as well as stochastic gradient descent-based methods (SGDs), are widely used to train RNNs in practice \cite{chandriah2021rnn, zhang2016multi}, but  the ``gradient" of the loss function associated with the weighted matrices via the ``chain rule" is calculated even if the ``chain rule" does not hold. The ``gradients" might exponentially increase to a very large value or shrink to zero as time $t$ increases, which makes RNNs training
with large time length $T$ very challenging \cite{bengio1994learning}.
To overcome this shortcoming, various techniques have been developed, such as gradient clipping \cite{mikolov2010recurrent}, gradient descent with Nesterov momentum \cite{bengio2013advances}, initialization with small values \cite{pascanu2013difficulty}, adding sparse regularization \cite{bengio2009learning}, and so on.
Because the essence of the above methods is to restrict the initial values of weighted matrices or gradients, they are sensitive to the choice of initial values \cite{le2015simple}. Moreover, GDs and SGDs for training RNNs lack rigorous convergence analysis.

The objective function in (\ref{pb:SAA_rnn}) is nonsmooth nonconvex and has a highly composite structure. In this paper, we equivalently reformulate (\ref{pb:SAA_rnn})
as a constrained optimization problem with a simple smooth objective function by utilizing auxiliary variables to represent the composition structures and treating these representations as constraints. Moreover, we propose an augmented Lagrangian method (ALM) for the constrained optimization problem with $\ell_2$-norm regularization, and design a block coordinate descent (BCD) method to solve the subproblem of the ALM at every iteration. The solution of the subproblems of the BCD method is very easy to compute with a closed-form.
Utilizing auxiliary variables to reformulate highly nonlinear composite structured problems as constrained optimization problems has been adopted for training Deep Neural Networks (DNNs) \cite{carreira2014distributed, cui2020multicomposite, liu2021linearly, liu2022inexact, zhang2017convergent}.
However, these algorithms for DNNs cannot be used for RNNs directly because of the difference between their architectures. In fact, RNNs share the same weighted matrices and bias vectors across different layers, whereas DNNs have distinct weighted matrices and bias vectors in different layers. In  DNNs, the weighted matrices and bias vectors can be updated layer by layer, allowing for the separation of the gradient calculation across different layers. However, in RNNs, the weighted matrices and bias vectors need to be updated simultaneously. Therefore, it is necessary to establish effective algorithms tailored to the characteristics of RNNs. To the best of our knowledge, the proposed ALM in this paper is the first first-order optimization method for training RNNs with solid convergence results.

Recently, several augmented Lagrangian-based methods have been proposed for nonconvex nonsmooth problems with composite structures. In \cite{chen2017augmented}, Chen et al. proposed an ALM for non-Lipschitz nonconvex programming, which requires the constraints to be smooth.
Hallak and Teboulle in \cite{hallak2023adaptive} transformed a comprehensive class of optimization problems into constrained problems with smooth constraints and nonsmooth nonconvex objective functions, and
proposed a novel adaptive augmented Lagrangian-based method to solve the constrained problem. The assumption on the smoothness of constraints in \cite{chen2017augmented,hallak2023adaptive}  is not satisfied for the optimization problem arising in training RNNs with nonsmooth activation functions considered in this paper.

Our contributions are summarized as follows:
\begin{itemize}
    \item {We prove that the solution set of the constrained problem with $\ell_2$ regularization is nonempty and compact. Furthermore, we prove that  any feasible point of the constrained optimization problem satisfies the no nonzero abnormal multiplier constraint qualification (NNAMCQ), which immediately guarantees any local minimizer of the constrained problems is a Karush-Kuhn-Tucker (KKT) point. }

    \item We show that any accumulation point of the sequence generated by the BCD method is a directional stationary point of the subproblem. Moreover, we show that in the $k$-th iteration of the ALM, the {stopping} criterion of the BCD method for solving the subproblem can be satisfied within  $O\big(1/(\epsilon_{k-1})^2\big)$ finite steps for any $\epsilon_{k-1}>0$.
    \item  We show that there exists an accumulation point of the sequence generated by the ALM for solving the constrained optimization problem with regularization and any accumulation point of the sequence is a KKT point.
             \item We compare the performance of the ALM with several state-of-the-art methods for both synthetic and real datasets. The numerical results verify that our ALM outperforms other algorithms in terms of forecasting accuracy for both the training sets and the test sets.
\end{itemize}

The rest of the paper is organized as follows.
In \cref{sec:formulation}, we equivalently reformulate problem (\ref{pb:SAA_rnn}) as a nonsmooth nonconvex constrained minimization problem with a simple smooth objective function. Then we show that the solution set of the constrained problem with regularization is nonempty and bounded, and give the first-order necessary optimality conditions for the constrained problem and the regularized problem.
We propose the ALM for the constrained problem with regularization, as well as the BCD method for the subproblems of the ALM in \cref{sec:alg}.  We establish the convergence results of the BCD method, and the ALM in \cref{sec:cv}. Finally, we conduct numerical experiments on both the synthetic  and  real data  in \cref{sec:ne}, which
demonstrate the effectiveness and efficiency of the ALM  for the reformulated optimization problem.

{\bf Notation and terminology.}
Let $\mathbb{N}_{+}$ denote the set of positive integers. For column vectors $\pi_1, \pi_2,\ldots,\pi_{l}$, let us denote by $\boldsymbol{\pi} := (\pi_1;\pi_2;\ldots;\pi_{ l}) = (\pi_1^{\sss \top},\pi_2^{\sss \top},\ldots, \pi_{l}^{\sss \top})^{\sss \top}$ a column vector. For a given matrix $D\in \mathbb{R}^{k \times l}$, we denote by $D_{.j}$ the $j$-th column of $D$ and use
${\rm{vec}} (D)
= (D_{.1};D_{.2};\ldots;D_{.l})\in \mathbb{R}^{kl}$ to represent a column vector.
For a given vector $g$, we use ${\rm{diag}}(g)$ to represent the diagonal matrix, whose $(i,i)$-entry is the $i$-th component $g_i$ of $g$.
We use ${\boldsymbol{e}_{l}}$ to represent the  vector of all ones in $\mathbb{R}^{l}$.
For $\nu \in \mathbb{R}$, $\lceil \nu \rceil$ refers to the smallest integer that is greater than $\nu$.
  For a given $N\in \mathbb{N}_+$, we denote  $[N]:=\{1,2,\ldots,N\}.$
We use $\|\cdot\|$ and $\| \cdot\|_{\infty}$ to denote the $\ell_2$-norm and infinity norm of a vector or a matrix, respectively. We denote by $\|\cdot\|_{F}$ the Frobenius norm of a matrix.

Let $f : \mathbb{R}^{n_1} \to \mathbb{R}$ be a
proper lower semicontinuous function defined on
$\mathbb{R}^{n_1}$. The notation $x^{k} \stackrel{f}{\rightarrow} \bar{x}$ means that $x^k \to \bar x$ and $f(x^k) \to f(\bar x)$.
The Fr\'echet subdifferential $\hat{\partial} f(x)$ and the limiting subdifferential {$\partial f(x)$} of $f$ at $\bar{x} \in \mathbb{R}^{n_1}$ are defined as
\begin{eqnarray*}
    \hat{\partial} f(\bar{x}) := \left\{g \in \mathbb{R}^{n_1} : \liminf_{x \to \bar{x}, x \neq \bar{x}}\frac{f(x)-f(\bar{x})-\langle g, x -\bar{x}\rangle}{\|x-\bar{x}\|} \geq 0 \right\}, \\
    \partial f(\bar{x}) := \left\{g \in \mathbb{R}^{n_1} : \exists x^{k} \stackrel{f}{\rightarrow} \bar{x}, g^{k} \to g \ \text{with} \ g^k \in \hat{\partial} f(x^k),\ \forall k\right\},
\end{eqnarray*} by {\cite[Definition 1.1]{kruger2003frechet}} and {\cite[Definition 8.3, pp. 301]{rockafellar2009variational}}, respectively.
{A point $\bar x$ is said to be a Fr\'echet stationary point of $\min f(x)$ if $0\in \hat \partial f(\bar x)$, and $\bar x$ is said to be a limiting stationary point of $\min f(x)$ if $0\in \partial f(\bar x)$.}
By {\cite[pp. 30]{clarke1990optimization}}, the usual (one-side) directional derivative of $f$ at $x$ in the direction $d \in \mathbb{R}^{n_1}$ is
\begin{displaymath}
    f'(x; d) := \lim_{\lambda \downarrow 0} \frac{ f(x+\lambda d)-f(x)}{\lambda},
\end{displaymath}
when the limit exists.
According to \cite[Definition 2.1]{peng2020computation},
we say that a point $\bar x \in \mathbb{R}^{n_1}$ is a d(irectional)-stationary point of $\min f(x)$ if $$f'({\bar x};d) \geq 0, \quad \forall d \in \mathbb{R}^{n_1}.$$

\section{Problem reformulation and optimality conditions}
\label{sec:formulation}

{For simplicity, we focus on}  the activation function $\sigma {: \mathbb{R} \to \mathbb{R}}$ as the ReLU function, i.e.,
\begin{eqnarray}\label{ReLU}
\quad\sigma({u}) = \max\{{u}, 0\} = ({u})_{+}.
\end{eqnarray}
{Our model, algorithms and theoretical analysis developed in this paper can be generalized to 
the leaky ReLU and the ELU activation functions.
Detailed analysis for the extensions will be given in section \ref{subsec:exten_act_model}.}

\subsection{Problem reformulation}
We utilize auxiliary variables $\mathbf{h}$, $\mathbf{u}$ and denote vectors $\mathbf{w}, \mathbf{a}, \mathbf{z}, \mathbf{s}$ as
\begin{eqnarray*}
&&{\mathbf{h}}=({h}_1; {h}_2; ...; {h}_T)\in \mathbb{R}^{rT},
\quad
{\mathbf{u}}=({u}_1; {u}_{2}; ...; {u}_T) \in \mathbb{R}^{rT},\\
&&{\mathbf{w}}=(\text{vec}({W}); \text{vec}({V}); {b}) \in \mathbb{R}^{N_{\mathbf{w}}},
\quad
{\mathbf{a}}=(\text{vec}({A}); {c}) \in \mathbb{R}^{N_{\mathbf{a}}},\\
&&{\mathbf{z}}=({\mathbf{w}}; {\mathbf{a}})\in \mathbb{R}^{N_{\mathbf{w}}+N_{{\mathbf{a}}}},\hspace{0.2in}
\quad
{\mathbf{s}}=({\mathbf{z}};{\mathbf{h}};{\mathbf{u}})
\in \mathbb{R}^{ N_{\mathbf{w}} + N_{{\mathbf{a}}} + 2rT},
\end{eqnarray*}
where $N_{\mathbf{w}}=r^2+r n + r$ and $N_{{\mathbf{a}}}=m r+ m$.

{We reformulate problem \eqref{pb:SAA_rnn} as the following constrained optimization problem:}
\begin{equation}
\begin{split}
\label{prob:cor}
     \min_{\begin{array}{c} {\mathbf{s}} \end{array}} \quad & \frac{1}{T} \sum_{t=1}^{T}\left\|y_{t}-({A}{h}_t+{c})\right\|^2\\
      \text{s.t.} \quad &{u}_t = {W}{h}_{t-1}+{V}{x}_{t}+{b}, \\
      &
      {h}_0 = {0}, \
      {h}_t = ({u}_t)_+, \  t=1, 2, ..., T.
\end{split}
\end{equation}
{Problems \eqref{pb:SAA_rnn} and  \eqref{prob:cor} are equivalent}
in the sense that if $(A^*, W^*, V^*, b^*, c^*)$ is a {global} solution of
(\ref{pb:SAA_rnn}), then   ${\mathbf{s}^*}=({\mathbf{z}^*};{\mathbf{h^*}};{\mathbf{u^*}}) $ is a {global} solution of
(\ref{prob:cor}) where  ${\mathbf{z}^*}$ is defined by $(A^*, W^*, V^*, b^*, c^*)$ {
and ${\mathbf{h^*}
}, {\mathbf{u^*}}$ satisfy the constraints of (\ref{prob:cor}) with $W^*, V^*, b^*$.
}
Conversely, if
${\mathbf{s}^*}$ is a {global} solution of
(\ref{prob:cor}), then ${\mathbf{z}^*}$ is a {global} solution of  (\ref{pb:SAA_rnn}).

Let us denote the mappings $\Phi: \mathbb{R}^{r} \mapsto \mathbb{R}^{m \times N_{\mathbf{a}}}$ and $\Psi: \mathbb{R}^{rT} \mapsto  \mathbb{R}^{rT \times N_{\bf{w}}}$
as
\begin{eqnarray}\label{PhiPsi}
    \Phi({h}_t)=\begin{bmatrix}{h}_{t}^{\top} \otimes I_{m} & I_{m}\end{bmatrix}, \quad
    \Psi({\mathbf{h}})=\begin{bmatrix}
    {0}_{r}^{\top} \otimes I_{r} & x_{1}^{\top} \otimes I_{r} & I_{r} \\
    {h}_{1}^{\top} \otimes I_{r} & x_{2}^{\top} \otimes I_{r} & I_{r} \\
    \vdots & \vdots & \vdots \\
    {h}_{T-1}^{\top} \otimes I_{r} & x_{T}^{\top} \otimes I_{r} & I_{r}
    \end{bmatrix},
\end{eqnarray}
where $\otimes$ represents the Kronecker product, $I_r$ and $I_m$ are the identity matrices with dimensions $r$ and $m$ respectively, and $0_r$ is the zero vector with dimension $r$.
Thus, the objective function and constraints in problem \cref{prob:cor} can be represented as
\begin{equation}\label{func:cons}
\begin{array}{c}
{\displaystyle \ell({\mathbf{s}}) :=   \frac{1}{T} \sum_{t=1}^{T}\left\|y_{t}-\Phi({h}_t)\mathbf{a}\right\|^2,}\\
 {\mathcal{C}_1(\mathbf{s}) := \mathbf{u} - \Psi(\mathbf{h})\mathbf{w}=0, \quad \quad \mathcal{C}_2(\mathbf{s}) := \mathbf{h} -(\mathbf{u})_+ = 0.}
 \end{array}
\end{equation}

To mitigate the overfitting, we further add a regularization term
\begin{eqnarray}\label{P}
P({\mathbf{s}}):= \lambda_1 \|{A}\|^2_F + \lambda_2 \|{W}\|^2_F + \lambda_3\|{V}\|^2_F + \lambda_4\|{b}\|^2 + \lambda_5\|{c}\|^2 + \lambda_6 \|\mathbf{u}\|^2
\end{eqnarray}
with {$\lambda_i>0, i=1,2,\ldots,6$} in the objective of problem \cref{prob:cor}, and consider the following problem:
\begin{equation}
\begin{aligned}
\label{prob:ror}
     \min  & \quad \mathcal{R}({\mathbf{s}})  := \ell({\mathbf{s}}) + P({\mathbf{s}})\\
           \text{s.t.} & \quad {\mathbf{s}} \in \mathcal{F} := \{{\mathbf{s}}: {\cal C}_1({\mathbf{s}})=0, \ {\cal C}_2({\mathbf{s}})=0\}.
\end{aligned}
\end{equation}

\subsection{Optimality conditions}
Problem (\ref{prob:cor}) and problem (\ref{prob:ror}) have the same feasible set
$\mathcal{F}.$  The constraint function  ${\cal C}_1$ is continuously differentiable, while the other constraint function ${\cal C}_2$ is linear in $\mathbf{h}$ and piecewise linear in $\mathbf{u}.$  We denote by $J\mathcal{C}_1(\mathbf{s})$  the Jacobian matrix of the function $\mathcal{C}_1$ at $\mathbf{s}$, and by  $J_{\mathbf{z}}\mathcal{C}_1(\mathbf{s})$, $J_{\mathbf{h}}\mathcal{C}_1(\mathbf{s})$,  $J_{\mathbf{u}}\mathcal{C}_1(\mathbf{s})$  the Jacobian matrix of function $\mathcal{C}_1$ at ${\mathbf{s}}$ with respect to the block $\mathbf{z}$, $\mathbf{h}$ and $\mathbf{u}$, respectively.
Similarly,  we use
$J_{\mathbf{h}}\mathcal{C}_2(\mathbf{s})$ to represent the Jacobian matrix of ${\cal C}_2$ at $\mathbf{s}$ with respect to $\mathbf h$.  Moreover, for a fixed vector
$\zeta\in \mathbb{R}^{rT}$,
we use
$\partial \big(\zeta^{\sss \top} {\cal C}_2(\mathbf{s})\big)$ to denote the limiting subdifferential of
$\zeta^{\sss \top}{\cal C}_2$ at $\mathbf{s}$ and
$\partial_{\mathbf{u}}\big(\zeta^{\sss \top} {\cal C}_2(\mathbf{s})\big)$ to denote {the} limiting subdifferential of
$\zeta^{\sss \top}{\cal C}_2$ at $\mathbf{s}$ with respect to $\mathbf u$.

The following lemma shows that the NNAMCQ \cite[Definition 4.2, pp. 1451]{ye2000multiplier}  holds at any feasible point ${\mathbf{s}} \in \mathcal{F}.$ {The proofs of all lemmas are given in  \cref{appendixa}.}
\begin{lemma}
\label{lemma: NNAMCQ}
The NNAMCQ holds at any $\mathbf{s} \in \mathcal{F}$, i.e., there exist no nonzero vectors $\xi = (\xi_{1}; \xi_{2};...; \xi_{T}) \in \mathbb{R}^{rT}$ and $\zeta = (\zeta_{1}; \zeta_{2};...; \zeta_{T})\in \mathbb{R}^{rT}$ such that
\begin{align}
\label{subdiff_2cons}
    {0} \in J\mathcal{C}_1(\mathbf{s})^{\top} \xi + \partial \big(\zeta^{\sss \top} {\cal C}_2(\mathbf{s})\big).
\end{align}
\end{lemma}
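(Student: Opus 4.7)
My plan is to argue by contradiction: assume nonzero $(\xi, \zeta)$ satisfy the inclusion in \eqref{subdiff_2cons} and split it into the four blocks corresponding to the groups of variables $\mathbf{a}, \mathbf{w}, \mathbf{h}, \mathbf{u}$. Because $\mathcal{C}_1(\mathbf{s}) = \mathbf{u} - \Psi(\mathbf{h})\mathbf{w}$ is smooth and independent of $\mathbf{a}$, and because $\zeta^\top \mathcal{C}_2(\mathbf{s}) = \zeta^\top \mathbf{h} - \zeta^\top(\mathbf{u})_+$ splits additively into a smooth $\mathbf{h}$-summand and a coordinate-separable nonsmooth $\mathbf{u}$-summand, the $\mathbf{a}$-block is vacuous, the $\mathbf{w}$-block reduces to $\Psi(\mathbf{h})^\top \xi = 0$ (which I will not actually need), the $\mathbf{h}$-block reads $\zeta = -J_{\mathbf{h}}\mathcal{C}_1(\mathbf{s})^\top \xi$, and the $\mathbf{u}$-block becomes the componentwise inclusion $-\xi_i \in \partial_{u_i}\bigl(-\zeta_i(u_i)_+\bigr)$.

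The key structural observation I would then extract from the explicit form of $\Psi(\mathbf{h})$ is that the $t$-th row block of $\mathcal{C}_1$ depends on $\mathbf{h}$ only through $W h_{t-1}$, hence on no $h_s$ with $s \geq t$. The $\mathbf{h}$-block therefore unfolds into the triangular recursion
\[
\zeta_t = W^\top \xi_{t+1} \ \text{ for } t = 1, \ldots, T-1, \qquad \zeta_T = 0,
\]
where $\zeta_T = 0$ holds automatically because $h_T$ appears in no constraint row. A backward induction in $t$ then finishes the argument: with $\zeta_T = 0$ in hand, each coordinate of the $\mathbf{u}$-block at time $T$ reduces to $-\xi_{T,i} \in \partial_{u_{T,i}}(0) = \{0\}$ since the component function is identically zero, so $\xi_T = 0$; the recursion then gives $\zeta_{T-1} = 0$, hence $\xi_{T-1} = 0$ by the same argument, and so on down to $t = 1$, contradicting the nonzeroness of $(\xi, \zeta)$.

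The one step needing real care is the correct handling of $\partial_{u_i}\bigl(-\zeta_i(u_i)_+\bigr)$ at coordinates where $u_i = 0$, whose shape depends on the sign of $\zeta_i$: an interval on the convex side, and a two-point set on the concave side where the Fr\'echet subdifferential at $0$ is empty and only the limits from nearby nonzero points survive. My induction is arranged precisely to sidestep this: by the time the $\mathbf{u}$-block at time $t$ is applied I will already have established $\zeta_t = 0$, whereupon the relevant component function is identically zero and its limiting subdifferential is simply $\{0\}$. Beyond this I only need the standard partial separability of the limiting subdifferential for $\zeta^\top\mathcal{C}_2(\mathbf{s})$, which is immediate from its additive split into a smooth $\mathbf{h}$-part and a nonsmooth $\mathbf{u}$-part.
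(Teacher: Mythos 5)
Your proposal is correct and follows essentially the same route as the paper's proof: read off the $\mathbf{h}$-block to get $\zeta_T=0$ and $\zeta_t=W^{\top}\xi_{t+1}$, use the $\mathbf{u}$-block to conclude $\xi_T=0$, and recurse backward to force $\xi=\zeta=0$. Your explicit care with $\partial_{u_i}\bigl(-\zeta_i(u_i)_+\bigr)$ at $u_i=0$ — arranging the induction so the inclusion is only invoked once $\zeta_t=0$ is already known — is a slightly more scrupulous write-up of the step the paper passes over quickly, but it is not a different argument.
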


\begin{definition}
\label{def:KKT point}
We say that $\mathbf{s} \in \mathcal{F}$ is a
KKT point of problem \cref{prob:cor} if there exist $ \xi \in \mathbb{R}^{rT}$ and $\zeta \in \mathbb{R}^{rT}$ such that
 $$   {0} \in \nabla\ell({\mathbf{s}}) + J\mathcal{C}_1(\mathbf{s})^{\top} \xi +
    \partial \big(\zeta^{\sss \top} {\cal C}_2(\mathbf{s})\big).$$
We say that $\mathbf{s} \in \mathcal{F}$ is a
KKT point of problem \cref{prob:ror} if there exist $ \xi \in \mathbb{R}^{rT}$ and $\zeta \in \mathbb{R}^{rT}$ such that
    $$    {0} \in \nabla\mathcal{R}(\mathbf{s}) + J\mathcal{C}_1(\mathbf{s})^{\top} \xi +
    \partial \big(\zeta^{\sss \top} {\cal C}_2(\mathbf{s})\big).$$
\end{definition}

Now we can establish the first order necessary conditions for problem \cref{prob:cor} and problem \cref{prob:ror}.
\begin{theorem}
 (i)   If $\bar{\mathbf{s}}$ is a local solution of problem \cref{prob:cor}, then $\bar{\mathbf{s}}$ is a KKT point of problem \cref{prob:cor}.  (ii)   If $\bar{\mathbf{s}}$ is a local solution of problem \cref{prob:ror}, then $\bar{\mathbf{s}}$ is a KKT point of problem \cref{prob:ror}.
\end{theorem}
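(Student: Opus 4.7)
The plan is to derive both parts as a direct consequence of the NNAMCQ established in Lemma 2.1, together with a standard Fritz John-type nonsmooth necessary optimality theorem for inequality/equality constrained programs. Since (i) and (ii) have identical structure—only the smooth objective changes from $\ell$ to $\mathcal{R} = \ell + P$, and both are $C^1$ in $\mathbf{s}$—I will only lay out (i) in detail; (ii) follows verbatim with $\nabla \ell$ replaced by $\nabla \mathcal{R}$.

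First, I would invoke a Fritz John-type necessary condition for the constrained program \cref{prob:cor}, treating the constraint map $\mathcal{C} := (\mathcal{C}_1; \mathcal{C}_2)$ as a single vector equality whose components are Lipschitz (in fact, $\mathcal{C}_1$ is $C^1$ and $\mathcal{C}_2$ is piecewise linear, hence Lipschitz). Concretely, using the generalized multiplier rule in \cite[Theorem 3.2]{ye2000multiplier} (or the equivalent subdifferential calculus form in Rockafellar–Wets), any local minimizer $\bar{\mathbf{s}}$ of \cref{prob:cor} admits multipliers $\lambda_0 \geq 0$, $\xi \in \mathbb{R}^{rT}$, $\zeta \in \mathbb{R}^{rT}$, not all zero, such that
\begin{equation*}
    0 \in \lambda_0 \nabla \ell(\bar{\mathbf{s}}) + J\mathcal{C}_1(\bar{\mathbf{s}})^{\top}\xi + \partial\bigl(\zeta^{\top}\mathcal{C}_2(\bar{\mathbf{s}})\bigr),
\end{equation*}
where I have used the fact that $\mathcal{C}_1$ is smooth, so its contribution to the limiting subdifferential of $\xi^{\top}\mathcal{C}_1$ reduces to $J\mathcal{C}_1(\bar{\mathbf{s}})^{\top}\xi$, and the sum rule is valid because the two multiplier terms enter additively through a separable structure in the constraint.

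Next, I would rule out the degenerate case $\lambda_0 = 0$. If $\lambda_0 = 0$, then $(\xi,\zeta)\neq 0$ and
\begin{equation*}
    0 \in J\mathcal{C}_1(\bar{\mathbf{s}})^{\top}\xi + \partial\bigl(\zeta^{\top}\mathcal{C}_2(\bar{\mathbf{s}})\bigr),
\end{equation*}
which is precisely the condition forbidden by the NNAMCQ proved in Lemma \ref{lemma: NNAMCQ} at the feasible point $\bar{\mathbf{s}}\in\mathcal{F}$. This contradiction forces $\lambda_0>0$. Dividing through by $\lambda_0$ and relabeling $\xi/\lambda_0 \to \xi$, $\zeta/\lambda_0 \to \zeta$ (using positive homogeneity of the limiting subdifferential under positive scaling of $\zeta$ in $\zeta^{\top}\mathcal{C}_2$) yields the KKT condition of \cref{def:KKT point} for problem \cref{prob:cor}.

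For part (ii), $P$ is a smooth quadratic regularizer, so $\mathcal{R}=\ell+P$ is still $C^1$ and $\nabla\mathcal{R}$ replaces $\nabla\ell$ throughout the argument above. The feasible set is unchanged, so Lemma \ref{lemma: NNAMCQ} still provides NNAMCQ at $\bar{\mathbf{s}}$, and the same Fritz John-plus-CQ argument gives the KKT condition of \cref{def:KKT point} for problem \cref{prob:ror}. The main delicacy—and the only place where this is not pure bookkeeping—is justifying the split of $\partial(\xi^{\top}\mathcal{C}_1+\zeta^{\top}\mathcal{C}_2)$ into $J\mathcal{C}_1^{\top}\xi+\partial(\zeta^{\top}\mathcal{C}_2)$; this uses the smoothness of $\mathcal{C}_1$ together with the standard sum rule \cite[Exercise 8.8, pp. 304]{rockafellar2009variational} for a sum of a smooth function and a Lipschitz function, which applies without qualification.
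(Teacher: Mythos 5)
Your proposal is correct and follows essentially the same route as the paper: both rest on Lemma~\ref{lemma: NNAMCQ} (NNAMCQ at every feasible point) combined with the nonsmooth multiplier rule of Ye~\cite{ye2000multiplier}, the paper simply citing the normal-form statement (Remark 2 and Theorem 5.2 there) while you unpack the underlying Fritz John argument and exclude the abnormal multiplier $\lambda_0=0$ by hand. The extra detail you supply—the sum-rule split of the constraint subdifferential using smoothness of $\mathcal{C}_1$ and positive homogeneity to rescale by $\lambda_0$—is sound and consistent with what the cited theorem encapsulates.
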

\begin{proof}
    Note that the objective functions of problem \cref{prob:cor} and problem \cref{prob:ror} are continuously differentiable. The constraint functions
    $\mathcal{C}_{1}$ is continuously differentiable, and $\mathcal{C}_{2}$ is Lipschitz continuous at  any feasible point $\mathbf{s} \in \mathcal{F}$. By \cref{lemma: NNAMCQ},  NNAMCQ holds at any $\bar{s}\in {\cal F}$. Therefore, the conclusions of this theorem hold according to \cite[Remark 2 and Theorem 5.2]{ye2000multiplier}.
\end{proof}

\subsection{Nonempty and compact solution set of \cref{prob:ror}}
Let ${\cal S}_1$ be the solution set of problem \cref{prob:ror}, and denote the level set
\begin{eqnarray}
\label{level set theta}
    \mathcal{D}_{\cal R} (\rho):= \{\mathbf{s}\in {\cal F}: \mathcal{R}(\mathbf{s}) \leq \rho \}
\end{eqnarray}
with a nonnegative scalar $\rho$.
\begin{lemma}
\label{lemma:solutionset_nonempty} {For any $\rho > {\cal R}(0)$, the level set $D_{\cal R}(\rho)$ is nonempty and compact. }
Moreover, the solution set $\mathcal{S}_{1}$ of \cref{prob:ror} is nonempty and compact.
\end{lemma}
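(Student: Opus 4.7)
\textbf{Proof plan for \cref{lemma:solutionset_nonempty}.} The plan is to establish the two assertions (level set nonempty/compact, then solution set nonempty/compact) in the natural order, using the coercivity that the $\ell_2$-regularizer $P$ provides together with the explicit form of the constraints in $\mathcal{F}$.

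For nonemptiness of $\mathcal{D}_{\mathcal{R}}(\rho)$, I would observe that the origin $\mathbf{s} = \mathbf{0}$ is feasible: with $\mathbf{w} = 0$ and $\mathbf{a} = 0$ one has $\Psi(\mathbf{h})\mathbf{w} = 0$ regardless of $\mathbf{h}$, so setting $\mathbf{u} = 0$ and $\mathbf{h} = (\mathbf{u})_{+} = 0$ satisfies both $\mathcal{C}_1(\mathbf{s}) = 0$ and $\mathcal{C}_2(\mathbf{s}) = 0$. Hence $\mathbf{0} \in \mathcal{F}$, and since $\mathcal{R}(\mathbf{0}) < \rho$, the origin lies in $\mathcal{D}_{\mathcal{R}}(\rho)$.

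For compactness of $\mathcal{D}_{\mathcal{R}}(\rho)$, closedness is immediate: $\mathcal{R}$ is continuous and $\mathcal{F}$ is closed because $\mathcal{C}_1$ and $\mathcal{C}_2$ are continuous. The main point is boundedness, and this is where the regularizer does all the work. Since $\ell(\mathbf{s}) \geq 0$, any $\mathbf{s}\in \mathcal{D}_{\mathcal{R}}(\rho)$ satisfies $P(\mathbf{s}) \leq \rho$, and because every $\lambda_i > 0$ in \cref{P}, each of $\|A\|_F$, $\|W\|_F$, $\|V\|_F$, $\|b\|$, $\|c\|$, and $\|\mathbf{u}\|$ is bounded by $\sqrt{\rho/\lambda_i}$. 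This bounds the block $\mathbf{z} = (\mathbf{w};\mathbf{a})$ directly from the definitions of $\mathbf{w}$ and $\mathbf{a}$, and it bounds $\mathbf{u}$. The remaining block $\mathbf{h}$ is controlled through the constraint $\mathcal{C}_2(\mathbf{s}) = 0$: since $\mathbf{h} = (\mathbf{u})_{+}$ componentwise, $\|\mathbf{h}\| \leq \|\mathbf{u}\|$, so $\mathbf{h}$ is bounded as well. Combining these uniform bounds with the closedness above yields compactness of $\mathcal{D}_{\mathcal{R}}(\rho)$.

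For the solution set $\mathcal{S}_1$, I would apply the Weierstrass theorem. Fix any $\rho_0 > \mathcal{R}(\mathbf{0})$; then $\mathcal{D}_{\mathcal{R}}(\rho_0)$ is nonempty and compact, and $\mathcal{R}$ is continuous, so $\mathcal{R}$ attains its minimum over $\mathcal{D}_{\mathcal{R}}(\rho_0)$. Since any point $\mathbf{s}\in\mathcal{F}\setminus \mathcal{D}_{\mathcal{R}}(\rho_0)$ has $\mathcal{R}(\mathbf{s}) > \rho_0 > \mathcal{R}(\mathbf{0}) \geq \inf_{\mathcal{F}}\mathcal{R}$, this minimum is actually the global minimum over $\mathcal{F}$, so $\mathcal{S}_1 \neq \emptyset$. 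Finally, $\mathcal{S}_1 \subseteq \mathcal{D}_{\mathcal{R}}(\rho_0)$ is bounded, and as the intersection of $\mathcal{F}$ with the preimage of a single real value under the continuous function $\mathcal{R}$, it is closed, hence compact.

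The only step that requires a little care is the uniform boundedness of $\mathbf{h}$, but since this is resolved cleanly by the ReLU constraint $\mathbf{h} = (\mathbf{u})_{+}$ (without needing $P$ to include a $\|\mathbf{h}\|^2$ term), the whole argument reduces to coercivity of $P$ plus the Weierstrass theorem.
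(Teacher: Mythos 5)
Your proposal is correct and follows essentially the same route as the paper's proof: the origin is feasible and lies in the level set, the positivity of all $\lambda_i$ in $P$ bounds $\mathbf{z}$ and $\mathbf{u}$, the constraint $\mathbf{h}=(\mathbf{u})_+$ bounds $\mathbf{h}$, closedness follows from continuity, and existence/compactness of $\mathcal{S}_1$ follows from a Weierstrass-type argument (which the paper delegates to a cited proposition). Your version merely spells out the feasibility of the origin and the Weierstrass step a bit more explicitly.
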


\section{ALM with BCD method for \cref{prob:ror}}


{To solve the regularized constrained problem \cref{prob:ror},  we develop  in this section an ALM. {The} subproblems of ALM are approximately solved by a BCD  method whose update of each block owns a closed-form expression. This is not an easy task due to the nonsmooth nonconvex constraints.}
The framework of the ALM 
is given in \cref{Alg:ALM}, in which the updating schemes for Lagrangian multipliers and penalty parameters are motivated by \cite{chen2017augmented}. It is worth mentioning that in  \cite{chen2017augmented}, the constraints are smooth. In problem \cref{prob:ror}, the constraints are nonsmooth nonconvex. 
{For solving the subproblems in the ALM, we design the BCD method in Algorithm \ref{Alg:BCD}
and provide the closed-form expression for the update of each block in the BCD. 
Due to the nonsmooth nonconvex constraints in  \cref{prob:ror}, the convergence analysis is complex, which will be given in section \ref{sec:cv}.}

The augmented Lagrangian (AL) function of problem \cref{prob:ror} is
\begin{eqnarray}
\label{alm function for 2constraints}
    & &\quad\ \mathcal{L}(\mathbf{s},\xi,\zeta,\gamma)
    \\
    &\quad\quad& :=
    \mathcal{R}(\mathbf{s})+\langle \xi,  \mathbf{u}-\Psi(\mathbf{h})\mathbf{w}\rangle + \langle \zeta,  \mathbf{h}-(\mathbf{u})_{+}\rangle
    +
    \tfrac{\gamma}{2}\left\|\mathbf{u}-\Psi(\mathbf{h})\mathbf{w}\right\|^2 + \tfrac{\gamma}{2}\left\|\mathbf{h}-(\mathbf{u})_{+}\right\|^2
    \nonumber\\
    &\quad\quad&\ =  {\cal R}({\mathbf{s}}) + \frac{\gamma}{2} {\left\|{\mathbf{u}}-\Psi(\mathbf{h})\mathbf{w} + \frac{\xi}{\gamma}\right\|}^2
    + \frac{\gamma}{2} {\left\|\mathbf{h}-(\mathbf{u})_{+}+ \frac{\zeta}{\gamma}\right\|}^2 - \frac{{\|\xi\|}^2}{2\gamma} - \frac{{\|\zeta\|}^2}{2\gamma},
    \nonumber
\end{eqnarray}
where $\xi = (\xi_{1}; \xi_{2};...; \xi_{T})\in \mathbb{R}^{rT}$ and $\zeta = (\zeta_{1}; \zeta_{2};...; \zeta_{T})\in \mathbb{R}^{rT}$ are the Lagrangian multipliers, and $\gamma > 0$ is the penalty parameter for the two quadratic penalty terms of constraints $\mathbf{u}=\Psi(\mathbf{h})\mathbf{w}$ and $\mathbf{h}=(\mathbf{u})_{+}$.
For convenience, we will also write ${\cal L}({\bf z}, {\bf h}, {\bf u} ,\xi,\zeta,\gamma)$ to represent ${\cal L}({\bf{s}},\xi,\zeta,\gamma)$ when the blocks of $\bf s$ are emphasized.

We develop some basic results {in the following two lemmas} relating to the AL function ${\cal L}$.
{The explicit formulas for the gradients of $\mathcal{L}$ with respect to $\mathbf{z}$ and $\mathbf{h}$ in Lemma \ref{gradient} (iii) and (iv)  will be used for obtaining the closed-form updates for the $\mathbf{z}$ and $\mathbf{h}$ blocks in the BCD method, respectively. The  Lipschitz constants $L_1(\xi,\zeta,\gamma,\hat{r})$ and $L_2(\xi,\zeta,\gamma,\hat{r})$ in Lemma \ref{lem3.2} are essential to design a practical stopping condition \eqref{cond:cri_BCD} of the BCD method in Algorithm \ref{Alg:BCD}. The results will also be used for the convergence results of the BCD method in Theorems \ref{lemma:stop_iter} and  \ref{theo:BCD_blockSeq_conv}.}
\begin{lemma}
\label{gradient}
For any fixed $\gamma, \xi$ and $\zeta$, the following statements hold.
\begin{enumerate}[label=(\roman*)]

\item \label{lem_ele:func_lb} The AL function ${\cal L}$ is lower bounded that satisfies
\begin{eqnarray*}
{\cal L}({\bf s},\xi,\zeta,\gamma) \ge -\frac{\|\xi\|^2}{2 \gamma} - \frac{\|\zeta\|^2}{2 \gamma}\quad \mbox{for all}\  {\bf s}.
\end{eqnarray*}

\item \label{lem_ele:levelset_pro} For any $\hat {\bf {s}}$ and $\hat{\Gamma} \ge \hat{r} :=  {\cal L} (\hat{\bf s},\xi,\zeta,\gamma)$, the level set
\begin{eqnarray*}
\Omega_{{\cal L}}(\hat \Gamma) := \{{\bf s}\ : {\cal L} ({\bf s},\xi,\zeta,\gamma) \le \hat{\Gamma} \}
\end{eqnarray*}
is nonempty and compact.

\item \label{lem_ele:grad_z} The AL function  ${\cal L}$ is continuously differentiable with respect to $\bf{z}$, and the gradient with respect to ${\bf z}$ is
\begin{eqnarray*}
\nabla_{\bf z} {\cal L}({\bf{z}},{\bf{h}},{\bf{u}},\xi,\zeta,\gamma) =
\left[\begin{array}{l}
\hat{Q}_1({\bf{s}},\xi,\zeta,\gamma){\bf w} +
\hat{q}_1({\bf{s}},\xi,\zeta,\gamma)\\
\hat{Q}_2({\bf{s}},\xi,\zeta,\gamma) \mathbf{a} + \hat{q}_2({\bf{s}},\xi,\zeta,\gamma)
\end{array}
\right],
\end{eqnarray*}
where
\begin{eqnarray*}
& &\hat{Q}_1({\bf{s}},\xi,\zeta,\gamma) = \gamma \Psi({\bf{h}})^{\sss \top} \Psi({\bf{h}})  + 2 \Lambda_1,\quad
\hat{q}_1({\bf{s}},\xi,\zeta,\gamma) = - \Psi({\bf{h}})^{\sss \top} (\xi + \gamma  {\bf u})\\
& & \hat{Q}_2({\bf{s}},\xi,\zeta,\gamma) = \frac{2}{T} \sum_{t=1}^T \Phi(h_t)^{\sss \top} \Phi(h_t)  + 2  \Lambda_2,\quad
\hat{q}_2({\bf{s}},\xi,\zeta,\gamma) = - \frac{2}{T} \sum_{t=1}^T \Phi(h_t)^{\sss \top} y_t\\
& &\Lambda_1 = {\rm{diag}}\Big(\big(\lambda_2 \boldsymbol{e}_{\sss{r^{_2}}}; \lambda_3 \boldsymbol{e}_{\sss rn}; \lambda_4 \boldsymbol{e}_{\sss r}\big)\Big),\quad  \Lambda_2 =   {\rm{diag}}\Big(\big({\lambda_1 {\boldsymbol{e}_{rm}}; {{\lambda_5 \boldsymbol{e}_{m}}}}\big)\Big).
\end{eqnarray*}

\item \label{lem_ele:grad_h} The AL function  ${\cal L}$ is continuously differentiable with respect to $\bf{h}$, and the gradient with respect to $\mathbf{h}$ is
\begin{align*}
&\nabla_{\bf h} {\cal L}({\bf{z}},{\bf{h}},{\bf{u}},\xi,\zeta,\gamma)\\
=\ &\big( \nabla_{h_1} {\cal L}({\bf z}, {\bf h}, {\bf u},\xi,\zeta,\gamma); \nabla_{h_2} {\cal L}({\bf z}, {\bf h}, {\bf u},\xi,\zeta,\gamma); \ldots; \nabla_{h_T} {\cal L}({\bf z}, {\bf h}, {\bf u},\xi,\zeta,\gamma) \big),
\end{align*}
where
\begin{eqnarray*}
& &\nabla_{h_t} {\cal L}({\bf z}, {\bf h}, {\bf u},\xi,\zeta,\gamma) =  \left\{\begin{array}{ll}
D_1({\bf{s}},\xi,\zeta,\gamma) {h_t} - d_{1t}({\bf{s}},\xi,\zeta,\gamma),&\ {\rm{if}}\ t \in  [T-1],\\
D_2({\bf{s}},\xi,\zeta,\gamma) {h_T}  - d_{2T} ({\bf{s}},\xi,\zeta,\gamma),&\ {\rm{if}}\ t=T,
\end{array}
\right.\\
& & D_1({\bf{s}},\xi,\zeta,\gamma) = \gamma W^{\sss \top} W + \tfrac{2}{T} A^{\sss \top} A + \gamma I_{r} ,\\
& & D_2({\bf{s}},\xi,\zeta,\gamma) = \tfrac{2}{T} A^{\sss \top} A + \gamma I_r,\\
& & d_{1t}({\bf{s}},\xi,\zeta,\gamma) = W^{\sss \top} \left(\xi_{t+1} + \gamma (u_{t+1}-V x_{t+1}-b)\right) + \gamma (u_t)_+ -\zeta_t + \tfrac{2}{T} A^{\sss \top}(y_t -c),\\
& &
d_{2T}({\bf{s}},\xi,\zeta,\gamma) = \gamma (u_T)_+ - \zeta_T + \tfrac{2}{T} A^{\sss \top} (y_T - c).
\end{eqnarray*}

\end{enumerate}
\end{lemma}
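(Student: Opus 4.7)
The plan is to handle the four parts separately, using the consolidated form of $\mathcal{L}$ in \eqref{alm function for 2constraints} for parts (i)--(ii) and direct differentiation for parts (iii)--(iv). For (i), I would simply note that $\mathcal{R}(\mathbf{s}) = \ell(\mathbf{s}) + P(\mathbf{s}) \ge 0$ and the two consolidated quadratic terms $\tfrac{\gamma}{2}\|\mathbf{u} - \Psi(\mathbf{h})\mathbf{w} + \xi/\gamma\|^2$ and $\tfrac{\gamma}{2}\|\mathbf{h} - (\mathbf{u})_+ + \zeta/\gamma\|^2$ are nonnegative, so dropping them immediately yields the stated lower bound $-\|\xi\|^2/(2\gamma) - \|\zeta\|^2/(2\gamma)$.

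For (ii), nonemptiness is automatic since $\hat{\mathbf{s}} \in \Omega_{\cal L}(\hat\Gamma)$ by the choice $\hat\Gamma \ge \hat r$, and closedness follows from continuity of $\mathcal{L}$ (the map $(\cdot)_+$ is continuous). For boundedness, I would exploit $\mathcal{L}(\mathbf{s},\xi,\zeta,\gamma) \le \hat\Gamma$ together with the nonnegativity of the two penalty terms to bound $\mathcal{R}(\mathbf{s})$ from above; since each of $\lambda_1,\ldots,\lambda_6$ is strictly positive, the coercivity of $P$ then bounds $A, W, V, b, c$ and $\mathbf{u}$. Boundedness of $\mathbf{u}$ together with boundedness of the second consolidated penalty forces $\|\mathbf{h} - (\mathbf{u})_+ + \zeta/\gamma\|$ to be bounded, hence $\mathbf{h}$ is bounded as well.

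For (iii), the key observation is that the nonsmooth term $(\mathbf{u})_+$ does not involve $\mathbf{z} = (\mathbf{w}; \mathbf{a})$, so $\mathcal{L}$ is smooth in $\mathbf{z}$ and in fact separately quadratic in $\mathbf{w}$ and $\mathbf{a}$. Differentiating the quadratic contributions from $P$ restricted to $(W, V, b)$ together with the first penalty yields $\nabla_{\mathbf{w}}\mathcal{L} = \hat Q_1 \mathbf{w} + \hat q_1$, and differentiating the contributions from $P$ restricted to $(A, c)$ together with $\ell$ yields $\nabla_{\mathbf{a}}\mathcal{L} = \hat Q_2 \mathbf{a} + \hat q_2$; identifying $\Phi(h_t)$ and $\Psi(\mathbf{h})$ with their Kronecker-product forms in \eqref{PhiPsi} produces the precise matrices and vectors stated.

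For (iv), the variable $h_t$ enters $\mathcal{L}$ in three ways when $t \le T-1$: (a) through $\ell$ via $\Phi(h_t)\mathbf{a} = A h_t + c$; (b) through the $t$-th block $\tfrac{\gamma}{2}\|h_t - (u_t)_+ + \zeta_t/\gamma\|^2$ of the second penalty; and (c) through the $(t+1)$-th block $\tfrac{\gamma}{2}\|u_{t+1} - W h_t - V x_{t+1} - b + \xi_{t+1}/\gamma\|^2$ of the first penalty, since the definition of $\Psi(\mathbf{h})$ places $W h_t$ in row-block $t+1$. For $t = T$ only (a) and (b) apply because $\Psi(\mathbf{h})$ terminates at $h_{T-1}$. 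Summing these three (respectively two) linear contributions and collecting them into matrix--vector form gives $D_1 h_t - d_{1t}$ (respectively $D_2 h_T - d_{2T}$). The main obstacle I expect is exactly this time-index bookkeeping: one must track that $h_t$ produces a $W^{\sss\top}$-gradient tied to the $(t+1)$-th residual and that no such contribution exists for $h_T$; once this is set up, the underlying calculus is routine.
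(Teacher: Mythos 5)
Your proposal is correct and follows essentially the same route as the paper's proof: parts (i)--(ii) use the consolidated (completed-square) form of $\mathcal{L}$, nonnegativity of $\mathcal{R}$ and of the two penalty terms, and the coercivity coming from $\lambda_i>0$ to bound $\mathbf{z}$ and $\mathbf{u}$ and then $\mathbf{h}$, while (iii)--(iv) are obtained by direct differentiation. Your write-up merely makes explicit the time-index bookkeeping for $h_t$ (the $W^{\sss\top}$ contribution from the $(t+1)$-th residual, absent for $t=T$) that the paper compresses into the phrase ``direct computation.''
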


\begin{lemma}\label{lem3.2}
For any ${\bf z}, {\bf h}, {\bf u}, {\bf h}', {\bf u}'$ in the level set  $\Omega_{{\cal L} }(\hat{r})$, we have
\begin{gather}
\|\nabla_{\bf{z}} {\cal L}({\bf z},{\bf h}', {\bf u}',\xi,\zeta,\gamma)  - \nabla_{\bf{z}} {\cal L}({\bf z},{\bf h}, {\bf u},\xi,\zeta,\gamma)\| \le L_1(\xi, \zeta, \gamma, {\hat r})
\left\|
\begin{array}{l}{\bf h}' - {\bf h}\\
{\bf u}' - {\bf u}
\end{array}
\right\|, \label{grd_lip_z}\\
\|\nabla_{\bf{h}} {\cal L}({\bf z},{\bf h}, {\bf u}',\xi,\zeta,\gamma)  - \nabla_{\bf{h}} {\cal L}({\bf z},{\bf h}, {\bf u},\xi,\zeta,\gamma)\| \le L_2(\xi, \zeta, \gamma, {\hat r})
\left\|
{\bf u}' - {\bf u}
\right\|, \label{grd_lip_h}
\end{gather}
where
\begin{eqnarray}\label{L12}
L_1(\xi,\zeta,\gamma,\hat{r}) = \sqrt{2} \max\{\gamma\delta_1, \delta_2 + \delta_3 +\delta_4\},\
L_2(\xi,\zeta,\gamma,\hat{r}) = \gamma \delta_5,
\end{eqnarray}
with $X := (x_1;x_2;...;x_T) \in \mathbb{R}^{nT}$,
\begin{eqnarray*}
& & \delta = \hat r + \frac{\|\xi\|^2}{2 \gamma} + \frac{\|\zeta\|^2}{2 \gamma},\
\delta_0 = \sqrt{\frac{2\delta}{\gamma}} + \sqrt{\frac{\delta}{\lambda_6}} + {\frac{\|\zeta\|}{\gamma}},\
 \delta_1 =  \sqrt{r(\delta^2+\|X\|^2 + T)},\\
& & \delta_2 = 2 \gamma \delta_1 \sqrt{\frac{r\delta}{\min\{\lambda_2,\lambda_3,\lambda_4\}}},\
 \delta_3 = \sqrt{r}\|\xi\| + \gamma \sqrt{\frac{r\delta}{\lambda_6}},\\
& &\delta_4 =  \frac{2\sqrt{m}}{\sqrt{T}}
\left(
2 \sqrt{m(\delta_0^2+1)} \sqrt{\frac{\delta}{\min\{\lambda_1,\lambda_5\}}} + \max_{1\le t\le T}\|y_t\|
\right),
\
\delta_5 = \sqrt{
\frac{\delta(T-1)}{\lambda_2}} + \sqrt{T}.
\end{eqnarray*}
\end{lemma}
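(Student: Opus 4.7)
The plan is first to derive uniform bounds on all the primal variables when $\mathbf{s}$ lies in the level set $\Omega_{\cal L}(\hat r)$, and then to estimate each partial-gradient difference using the closed-form expressions from \cref{gradient} together with the Lipschitz behavior of the bilinear maps $\Psi$ and $\Phi$ and the triangle inequality.

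For the bounds, I begin from the completed-square form of $\mathcal{L}$ in \eqref{alm function for 2constraints}. Since the two quadratic penalty terms are nonnegative, any $\mathbf{s}\in\Omega_{\cal L}(\hat r)$ satisfies $\mathcal{R}(\mathbf{s}) \le \hat r + \|\xi\|^2/(2\gamma) + \|\zeta\|^2/(2\gamma) = \delta$. The nonnegativity of each summand of $P(\mathbf{s})$ in \eqref{P} then gives $\|A\|_F^2 \le \delta/\lambda_1$, $\|W\|_F^2 \le \delta/\lambda_2$, $\|V\|_F^2 \le \delta/\lambda_3$, $\|b\|^2 \le \delta/\lambda_4$, $\|c\|^2 \le \delta/\lambda_5$, and $\|\mathbf{u}\|^2 \le \delta/\lambda_6$; while the $\mathbf{h}$-penalty $\tfrac{\gamma}{2}\|\mathbf{h}-(\mathbf{u})_+ + \zeta/\gamma\|^2 \le \delta$, combined with the triangle inequality and $\|(\mathbf{u})_+\| \le \|\mathbf{u}\|$, yields $\|\mathbf{h}\| \le \delta_0$. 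The Kronecker-product identity $\|v^\top \otimes I_k\|_F = \sqrt{k}\|v\|$ applied to the block form \eqref{PhiPsi} then produces the one-sided and Lipschitz estimates $\|\Psi(\mathbf{h})\|_F \le \sqrt{r(\|\mathbf{h}\|^2 + \|X\|^2 + T)}$, $\|\Psi(\mathbf{h}') - \Psi(\mathbf{h})\|_F \le \sqrt{r}\|\mathbf{h}' - \mathbf{h}\|$, and analogously $\|\Phi(h_t)\|_F \le \sqrt{m(\|h_t\|^2 + 1)}$, $\|\Phi(h_t') - \Phi(h_t)\|_F \le \sqrt{m}\|h_t' - h_t\|$. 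Composing these with the preceding variable bounds gives the quantities $\delta_1,\delta_2,\delta_3,\delta_4$ appearing in \eqref{L12}.

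To establish \eqref{grd_lip_z}, I split $\nabla_{\mathbf{z}}\mathcal{L}$ from \cref{gradient} into its $\mathbf{w}$- and $\mathbf{a}$-blocks. On the $\mathbf{w}$-block the $\Lambda_1$ term cancels and the remainder is $\gamma[\Psi(\mathbf{h}')^\top\Psi(\mathbf{h}') - \Psi(\mathbf{h})^\top\Psi(\mathbf{h})]\mathbf{w} - [\Psi(\mathbf{h}')^\top(\xi + \gamma\mathbf{u}') - \Psi(\mathbf{h})^\top(\xi + \gamma\mathbf{u})]$; adding and subtracting $\Psi(\mathbf{h}')^\top\Psi(\mathbf{h})$ (respectively $\Psi(\mathbf{h})^\top(\xi + \gamma\mathbf{u}')$) and applying the bounds from the previous paragraph to $\|\Psi(\cdot)\|$, $\|\mathbf{w}\|$ and $\|\mathbf{u}\|$ produces a constant of the form $\gamma\delta_1 + \delta_2 + \delta_3$ times $\|(\mathbf{h}' - \mathbf{h};\mathbf{u}' - \mathbf{u})\|$. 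On the $\mathbf{a}$-block the $\Lambda_2$ term again cancels and only the $\Phi(h_t)$-terms contribute; the same add-and-subtract device, combined with $\|\mathbf{a}\|^2 \le \delta/\min\{\lambda_1,\lambda_5\}$ and $\max_t\|y_t\|$, yields the $\delta_4$ contribution. Combining the two blocks through $\|(a;b)\| \le \sqrt{2}\max\{\|a\|,\|b\|\}$ then gives exactly the constant $L_1$ in \eqref{L12}. The proof of \eqref{grd_lip_h} is easier: by \cref{gradient} only $\mathbf{u}$ varies, so $D_1, D_2$ are unchanged and the only contributing terms in $d_{1t}, d_{2T}$ are $\gamma u_{t+1}$ (acted on by $W^\top$) and $\gamma(u_t)_+$; using $\|W\|_F \le \sqrt{\delta/\lambda_2}$, the $1$-Lipschitz continuity of $(\cdot)_+$, and summation over $t\in[T]$ produces $L_2 = \gamma\delta_5$.

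The main obstacle is the bookkeeping in the $\mathbf{w}$-block, where the gradient depends quadratically on $\Psi(\mathbf{h})$ and hence on $\mathbf{h}$; without the a priori level-set bounds the add-and-subtract trick does not close. Tracking the Kronecker-norm and Frobenius-operator inequalities carefully --- and preserving the correct dependence on $\gamma$ and $\hat r$ in every intermediate step --- is what ultimately matches the explicit constants displayed in \eqref{L12}.
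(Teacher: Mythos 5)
Your proposal follows essentially the same route as the paper: extract the bounds on $\mathbf{w},\mathbf{a},\mathbf{u}$ (and then $\mathbf{h}$, via the penalty term) from membership in $\Omega_{\mathcal{L}}(\hat r)$, use the Kronecker structure of \eqref{PhiPsi} to get $\|\Psi(\mathbf{h}')-\Psi(\mathbf{h})\|\le\sqrt{r}\|\mathbf{h}'-\mathbf{h}\|$ and the companion bounds for $\Phi$, and then apply the add-and-subtract device to the quadratic terms $\Psi(\mathbf{h})^{\top}\Psi(\mathbf{h})$, $\Phi(h_t)^{\top}\Phi(h_t)$ and $\Psi(\mathbf{h})^{\top}\mathbf{u}$; the treatment of \eqref{grd_lip_h} is identical to the paper's. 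The one place you diverge is the final assembly of $L_1$: you bound the $\mathbf{w}$-block by $(\gamma\delta_1+\delta_2+\delta_3)$ and the $\mathbf{a}$-block by $\delta_4$ and combine the two blocks with $\|(a;b)\|\le\sqrt{2}\max\{\|a\|,\|b\|\}$, which yields $\sqrt{2}\max\{\gamma\delta_1+\delta_2+\delta_3,\delta_4\}$ rather than the stated $\sqrt{2}\max\{\gamma\delta_1,\delta_2+\delta_3+\delta_4\}$; these two constants are not comparable in general (take $\delta_4$ large relative to $\gamma\delta_1$), so as written your last step does not deliver exactly \eqref{L12}. The paper instead groups the terms by which increment they multiply --- only the $\Psi(\mathbf{h}')^{\top}(\mathbf{u}'-\mathbf{u})$ piece carries $\|\mathbf{u}'-\mathbf{u}\|$, with coefficient $\gamma\delta_1$, while $\delta_2+\delta_3+\delta_4$ collects everything multiplying $\|\mathbf{h}'-\mathbf{h}\|$ across both blocks --- and then uses $a\|\mathbf{h}'-\mathbf{h}\|+b\|\mathbf{u}'-\mathbf{u}\|\le\sqrt{2}\max\{a,b\}\,\|(\mathbf{h}'-\mathbf{h};\mathbf{u}'-\mathbf{u})\|$. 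Since you have already established all the individual term bounds, this is a one-line regrouping rather than a missing idea, but it is needed to match the constant in the statement.
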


\subsection{ALM for the regularized RNNs}
\label{subsec:ALM}
{To solve the regularized constrained problem (\ref{prob:ror}), we propose the ALM in Algorithm  \ref{Alg:ALM}. 
The ALM first approximately
solves (\ref{subproblem_2cons}) that aims to  
minimize the AL function  with the fixed Lagrange multipliers $\xi^{k-1}$ and $\zeta^{k-1}$, and the fixed penalty parameter $\gamma_{k-1}$ for the quadratic terms, until
$\mathbf{s}^k$ satisfies the approximate first-order optimality necessary condition \cref{cond:subdiff} with tolerance $\epsilon_{k-1}$. 
Then the Lagrange multipliers are updated, and  the tolerance $\epsilon_k$ is reduced so that in the next iteration the subproblem is solved more accurately. Moreover, the penalty parameter $\gamma_k$ is unchanged if 
{the feasibility of ${\bf{s}}^k$ is sufficiently improved compared to that of ${\bf{s}}^{k-1}$}, otherwise, $\gamma_k$ is increased.}

\begin{algorithm}
\caption{\textbf{The augmented Lagrangian method (ALM) for  \cref{prob:ror}}}
\label{Alg:ALM}
\begin{algorithmic}[1]
\STATE Set an initial penalty parameter $\gamma_{0} > 0$, parameters $\eta_1, \eta_2, \eta_4 \in(0,1)$ and $\eta_3 > 1$, an initial tolerance $\epsilon_{0}>0$, vectors of Lagrangian multipliers $\xi^{0}$, $\zeta^{0}$, and a feasible initial point $\mathbf{s}^{0} = (\mathbf{z}^{0}, \hat{\mathbf{h}}, \hat{\mathbf{u}})$ where $\hat{{h}}_0 = {0}$, $\hat{{u}}_t={W}\hat{{h}}_{t-1}+{V}{x}_{t}+{b}$ and $\hat{{h}}_{t}=(\hat{u}_t)_+$ for $t \in [T]$.

\STATE Set $k:=1$.
\STATE \textbf{Step 1:} Solve
\begin{align}
\label{subproblem_2cons}
     \min_{\mathbf{s}} \quad &\mathcal{L}(\mathbf{s}, \xi^{k-1}, \zeta^{k-1}, \gamma_{k-1})
\end{align}
to obtain $\mathbf{s}^{k}$ satisfying the following condition
\begin{align}
\text{dist}\big(0, \partial
\mathcal{L}(\mathbf{s}^{k},\xi^{k-1}, \zeta^{k-1}, \gamma_{\sss k-1})\big) \leq \epsilon_{k-1}. \label{cond:subdiff}
\end{align}
\STATE \textbf{Step 2:} Update $\epsilon_{k}=\eta_4 \epsilon_{k-1}$,  $\xi^{k-1}$ and $\zeta^{k-1}$ as
\begin{align}
\label{lar_mul_update}
    \xi^{k} = \xi^{k-1} +\gamma_{k-1}\left( \mathbf{u}^{k} - \Psi(\mathbf{h}^{k})\mathbf{w}^{k}\right), \quad
    \zeta^{k} = \zeta^{k-1} + \gamma_{k-1}\left( \mathbf{h}^{k}-(\mathbf{u}^{k})_{+}\right).
\end{align}\\
\STATE \textbf{Step 3:} Set $\gamma_{\sss k}=\gamma_{\sss k-1}$, if the following condition is satisfied
\begin{eqnarray}
\label{cond:update_gamma}
     \max\left\{
    \|{\cal C}_1({\bf{s}}^k)\|, \|{\cal C}_2({\bf{s}}^k)\|
    \right\}
    \leq
    \eta_1 \max\left\{
    \|{\cal C}_1({\bf{s}}^{k-1})\|, \|{\cal C}_2({\bf{s}}^{k-1})\|\right\}.
\end{eqnarray}
\STATE Otherwise, set
\begin{align}
\label{update:gamma}
    \gamma_{\sss k} =  \max \left\{\gamma_{k-1} / \eta_2,\left\|\xi^{k}\right\|^{1+\eta_3}, \left\|\zeta^{k}\right\|^{1+\eta_3} \right\}.
\end{align}
\STATE Let $k-1:= k$ and go to \textbf{Step 1}.
\end{algorithmic}
\end{algorithm}

\begin{remark}
{The main operation of Algorithm \ref{Alg:ALM} is to  approximately solve the subproblem (\ref{subproblem_2cons}). Furthermore, to show that  Algorithm \ref{Alg:ALM} is well-defined  requires that the 
algorithm for solving the subproblem (\ref{subproblem_2cons}) can be terminated within finite steps to meet the stopping condition in (\ref{cond:subdiff}).}

{In section \ref{subsec:BCD}, we will design a BCD method to solve the subproblem (\ref{subproblem_2cons}). The update of each block of the BCD method owns a closed-form formula, which makes the BCD method efficient. Moreover, the stopping condition (\ref{cond:subdiff}) can be replaced by a simpler condition (\ref{cond:cri_BCD}) as will be shown in Theorem \ref{lemma:stop_iter}.}  
\end{remark}

\subsection{BCD method for subproblem}
\label{subsec:BCD}
To solve the nonsmooth nonconvex problem \eqref{subproblem_2cons} in Step 1 of \cref{Alg:ALM},
we propose a BCD method in \cref{Alg:BCD} to solve the  {subproblem} at the $k$-th iteration in the ALM by alternatively updating the blocks in the order of $\mathbf{z}$, $\mathbf{h}$, and $\mathbf{u}$ in $\mathbf{s}$, respectively.
Let us choose a constant
 $\Gamma$ such that\begin{eqnarray}\label{def:gamma}
\Gamma \geq \mathcal{L} \big(\mathbf{s}^{0},\xi^{0}, \zeta^{0}, \gamma_{0} \big).
\end{eqnarray}

Because at the $k$-th iteration of the ALM, $\xi^{k-1}, \zeta^{k-1}, \gamma_{k-1}$ are fixed, we just write $\xi,\zeta,\gamma$ in the BCD method for brevity.
Furthermore, for the BCD solving the {subproblem} appeared at the $k$-th iteration of the ALM,  we define
\begin{eqnarray}
\label{s_z_kj}
\quad\quad \mathbf{s}^{k-1,j}_{\sss {\bf{z}}}
 := {({\mathbf{z}^{k-1,j}}; {\mathbf{h}^{k-1,j-1}}; {\mathbf{u}^{k-1,j-1}})},
\  {\mathbf{s}}^{k-1,j}_{{\bf{h}}} := ({\mathbf{z}^{k-1,j}}; {\mathbf{h}^{k-1,j}};{\mathbf{u}^{k-1,j-1}})
\end{eqnarray}
to denote the point obtained after updating the ${\bf z}$ block, and updating the ${\bf{h}}$ block  at the $j$-th iteration of the BCD method, and
 we use \begin{eqnarray}
 \label{sk-1j}
{\bf{s}}^{k-1,j} =({\bf z}^{k-1,j}; {\bf{h}}^{k-1,j}; {\bf{u}}^{k-1,j})
\end{eqnarray}
to represent the point obtained at the $j$-th iteration of the BCD method after updating the ${\bf u}$  block.

\begin{algorithm}
\caption{\textbf{Block Coordinate Descent (BCD) method for \cref{subproblem_2cons}}}
\label{Alg:BCD}
\begin{algorithmic}[1]
\STATE  Set the initial point of BCD algorithm as
\begin{align}
\label{ini_bcd}
   \mathbf{s}^{k-1,0} =
    \begin{cases}
    \mathbf{s}^{k-1},
    &\text {if} \  k > 1 \ \text{and} \ {\small \begin{matrix}\mathcal{L}\big(\mathbf{s}^{k-1},\xi, \zeta, {\gamma}\big) \leq \Gamma\end{matrix}},\\
    \mathbf{s}^{0},
    &\text{otherwise.}\end{cases}
\end{align}
Compute ${\hat r}_{k-1} = \mathcal{L}(\mathbf{s}^{k-1,0},\xi, \zeta,\gamma)$,  $L_{1,k-1}= L_1(\xi,\zeta,\gamma,\hat{r}_{k-1})$ and $L_{2,k-1} = L_2(\xi,\zeta,\gamma,\hat{r}_{k-1})$ by formula \eqref{L12}.\\
\STATE{Set $j := 1$.}
\WHILE{the stop criterion is not met}
\STATE \textbf{Step 1:} \label{step_BCD:update variables}Update blocks $\mathbf{z}^{k-1,j}$, $\mathbf{h}^{k-1,j}$ and  $\mathbf{u}^{k-1,j}$ separately as
\begin{align}
    \mathbf{z}^{k-1,j} \ &= \ \arg\min_{\mathbf{z}} \ \mathcal{L}\left(\mathbf{z}, \mathbf{h}^{k-1,j-1}, \mathbf{u}^{k-1,j-1}, \xi, \zeta, \gamma\right), \label{subpro_z_subproblem}\\
    \mathbf{h}^{k-1,j} \ &= \ \arg\min_\mathbf{h} \ \mathcal{L}\left(\mathbf{z}^{k-1,j}, \mathbf{h}, \mathbf{u}^{k-1,j-1}, \xi, \zeta, \gamma\right), \label{subpro_h_subproblem}\\
    \mathbf{u}^{k-1,j} \ &\in \ \arg\min_\mathbf{u} \ \mathcal{L} \left(\mathbf{z}^{k-1,j}, \mathbf{h}^{k-1,j}, \mathbf{u},\xi, \zeta, \gamma\right) + \tfrac{\mu}{2}\left\|\mathbf{u} - \mathbf{u}^{k-1,j-1}\right\|^2. \label{subpro_u_subproblem}
\end{align}
\\
Then set ${\mathbf{s}^{k-1,j}} = ({\mathbf{z}^{k-1,j}}; {\mathbf{h}^{k-1,j}}; {\mathbf{u}^{k-1,j}})$.
\STATE \textbf{Step 2:}  If the stop criterion
\begin{align}
\label{cond:cri_BCD}
     \left\|\mathbf{s}^{k-1,j} - \mathbf{s}^{k-1, j-1}\right\| \leq \frac{\epsilon_{k-1}}{\max\{L_{1,k-1}, L_{2,k-1}, \mu\}},
\end{align}
 is not satisfied, then set $j := j + 1$ and go to \textbf{Step 1}.
\ENDWHILE
\RETURN $\mathbf{s}^{k} = \mathbf{s}^{k-1,j}$.
\end{algorithmic}
\end{algorithm}
 { Condition \eqref{cond:subdiff} is satisfied when \eqref{cond:cri_BCD} holds, which will be proved in Theorem \ref{lemma:stop_iter}.}
The closed-form solutions of problems \cref{subpro_z_subproblem}, \cref{subpro_h_subproblem} and \cref{subpro_u_subproblem} are provided below.

\textbf{Update $\mathbf{z}^{k-1,j}$:} Problem \cref{subpro_z_subproblem} is an unconstrained optimization problem with smooth and strongly convex objective function. By employing \cref{gradient} \ref{lem_ele:grad_z} and  solving
\begin{eqnarray*}
\nabla_{\bf{z}}{\cal L}(\mathbf{s}^{k-1,j}_{\sss {\bf{z}}},\xi,\zeta,\gamma) =0,
\end{eqnarray*} the unique global minimizer $\mathbf{z}^{k-1,j} = ({{\mathbf{w}}^{k-1,j}}; {{\mathbf{a}}^{k-1,j}})$ can be computed as
\begin{eqnarray*}
    & & \mathbf{w}^{k-1,j} = -{\hat{Q}_1(\mathbf{s}^{k-1,j}_{\sss {\bf{z}}},\xi,\zeta,\gamma)}^{-1}
\hat{q}_1(\mathbf{s}^{k-1,j}_{\sss {\bf{z}}};\xi,\zeta,\gamma), \\
    & & \mathbf{a}^{k-1,j} = -{\hat{Q}_2(\mathbf{s}^{k-1,j}_{\sss {\bf{z}}},\xi,\zeta,\gamma)}^{-1}
\hat{q}_2(\mathbf{s}^{k-1,j}_{\sss {\bf{z}}},\xi,\zeta,\gamma).
\end{eqnarray*}

\textbf{Update $\mathbf{h}^{k-1,j}$:} The objective function of \cref{subpro_h_subproblem} is also strongly convex and smooth. By employing \cref{gradient} \ref{lem_ele:grad_h} and  solving
$\nabla_{\bf{h}}{\cal L}({\mathbf{s}}^{k-1,j}_{{\bf{h}}},\xi,\zeta,\gamma) =0,
$ we get
its unique global minimizer, given by
\begin{equation}
\begin{split}
    {h}^{k-1,j}_{t} =
    \begin{cases}
        {D_1({\bf{s}}_{\bf h}^{k-1,j},\xi,\zeta,\gamma)}^{-1} d_{1t}({\bf{s}}_{\bf h}^{k-1,j},\xi,\zeta,\gamma) , & \text{if} \ t\in [T-1],\\
        {D_2({\bf{s}}_{\bf h}^{k-1,j},\xi,\zeta,\gamma)}^{-1} d_{2T}({\bf{s}}_{\bf h}^{k-1,j},\xi,\zeta,\gamma),  & \text{if} \ t=T.
    \end{cases}
\end{split}
\end{equation}

\textbf{Update $\mathbf{u}^{k-1,j}$:}
Although problem \cref{subpro_u_subproblem} is  nonsmooth nonconvex, one of its global {solutions} is accessible, because
the objective function of problem \cref{subpro_u_subproblem} can be separated into $rT$ one-dimensional functions with the same structure.
Thus, we aim to solve the following one-dimensional problem:
\begin{equation}
\label{subpro_u_ele}
    \mathop{\min}_{u\in \mathbb{R}} \ \varphi(u) := \tfrac{\gamma}{2}(u - \theta_{1})^2 + \tfrac{\gamma}{2}(\theta_{2}-({u})_{+})^2 + \tfrac{\mu}{2} (u - \theta_{3})^2 + \lambda_6 u^2,
\end{equation}
where $ \theta_{1}, \theta_{2}, \theta_{3} \in \mathbb{R}$ are known real numbers.
Denote
\begin{align}
\label{sub_u_ele_u_all}
u^{+} := \mathop{\arg\min}_{u\in \mathbb{R}_+} \varphi(u) \quad \text{and} \quad   u^{-}:=\mathop{\arg\min}_{u\in \mathbb{R}_-} \varphi(u).
\end{align}
By direct computation,
\begin{eqnarray}
\label{subpro_u_ele_u_plus}
u^{+} =
    \left\{\begin{array}{ll}
\dfrac{\gamma \theta_{1} + \gamma \theta_{2} + \mu \theta_{3}}{2\gamma + 2\lambda_6 + \mu},  & \text{if} \ \gamma\theta_{1} + \gamma \theta_{2} + \mu \theta_{3} > 0,  \\
    0, & \text{otherwise},
    \end{array}
    \right.
\end{eqnarray}
and
\begin{eqnarray}
\label{subpro_u_ele_u_minus}
    u^{-} = \left\{
    \begin{array}{ll}
\dfrac{\gamma\theta_{1} + \mu \theta_{3}}{\gamma + 2\lambda_6 + \mu}, & \text{if} \ \gamma\theta_{1} + \mu \theta_{3}< 0,  \\
    0, & \text{otherwise}.
    \end{array}
\right.
\end{eqnarray}
Then a solution of (\ref{subpro_u_ele}) can be given as
\begin{eqnarray*}
u^* = \left\{
      \begin{array}{ll}
      u^+, & \text{if}\ \varphi(u^+) \le \varphi (u^{-}),\\
      u^{-}, & \text{otherwise}.
      \end{array}
      \right.
\end{eqnarray*}

By setting
\begin{eqnarray*}
\theta_{1} =(\Psi(\mathbf{h}^{k-1,j})\mathbf{w}^{k-1,j})_{i}- \frac{\xi_{i}}{\gamma},\quad \theta_{2} ={\mathbf{h}}^{k-1,j}_i + \frac{\zeta_{i}}{\gamma},\quad \theta_{3} ={{\mathbf{u}}_i^{k-1,{j-1}}},\\
{{\mathbf{u}}_{i}^{k-1,j}}=u^*, \quad {{{\mathbf{u}}_i}^+}=u^+  ,\quad {{\mathbf{u}}_i}^{-}=u^{-}, \quad\quad\quad\quad
\end{eqnarray*}
we obtain
 a closed-form solution of problem \cref{subpro_u_subproblem} as
\begin{eqnarray*}
{\mathbf{u}}^{k-1,j}_{i} = \left\{\begin{array}{ll}
\mathbf{u}_{i}^{+}, & \text{if} \ \varphi(\mathbf{u}_{i}^{+}) \leq \varphi(\mathbf{u}_{i}^{-}),\\
\mathbf{u}_{i}^{-}, & \text{otherwise}, \quad \quad  \quad\quad i=1,\ldots, rT.
    \end{array}
    \right.
\end{eqnarray*}

\begin{remark}
    It is important to mention that the solution set of problem \cref{subpro_u_subproblem} may not be a singleton. To ensure the selected solution is unique, we set ${\mathbf{u}}^{k-1,j}_{i} = \mathbf{u}_{i}^{+}$ when $\varphi(\mathbf{u}_{i}^{+}) = \varphi(\mathbf{u}_{i}^{-})$ for every $ i\in [rT]$.
\end{remark}

\section{Convergence analysis}
\label{sec:cv}
In this section, we show the convergence results of both the BCD method for the subproblem of the ALM, as well as the ALM for \cref{prob:ror}.

\subsection{Convergence analysis of \cref{Alg:BCD}}
\label{subsec:conv_bcd}

It is clear that
\begin{align}
\label{almfunc_subproblem_2cons}
\mathcal{L}(\mathbf{s},\xi, \zeta,\gamma)
    = g(\mathbf{s},\xi,\gamma) + q(\mathbf{s},\zeta,\gamma),
\end{align}
where
\begin{gather}
    g(\mathbf{s},\xi,\gamma) = \mathcal{R}(\mathbf{s})
    + \frac{\gamma}{2} {\left\|{\mathbf{u}}-\Psi(\mathbf{h})\mathbf{w} + \frac{\xi}{\gamma}\right\|}^2
    - \frac{\|\xi\|^2}{2 \gamma} , \label{func:subg} \\
    q(\mathbf{s},\zeta,\gamma)
    = \frac{\gamma}{2} {\left\|\mathbf{h}-(\mathbf{u})_{+}+ \frac{\zeta}{\gamma}\right\|}^2 - \frac{{\|\zeta\|}^2}{2\gamma}. \label{func:subq}
\end{gather}
The function $g$ is smooth but nonconvex, because it contains the bilinear structure ${\Psi(\mathbf{h})}\mathbf{w}$. The function $q$ is nonsmooth nonconvex.

For the convergence analysis below, we further use
${\bf{s}}_{ \bf{z}}^{(j)}$ and ${\bf{s}}_{\bf{h}}^{(j)}$ to represent ${\bf{s}}_{ \bf{z}}^{k-1,j}$ and ${\bf{s}}_{\bf{h}}^{k-1,j}$ in \eqref{s_z_kj}, and
use  ${\bf s}^{(j)}$ to represent $s^{k-1,j}$ in \eqref{sk-1j} for brevity.
We emphasize that the point ${\bf{s}}^k$ is generated by the ALM in \cref{Alg:ALM}, while the point ${\bf s}^{(j)}$ is generated by the BCD method in \cref{Alg:BCD} for solving the subproblem in the ALM at the $k$-th iteration. 

{The following two lemmas will be used in proving the convergence results of the BCD method.}

\begin{lemma}
\label{lemma:sj_in_O}
    Let $\{\mathbf{s}^{(j)}\}$ represent the sequence generated by \cref{Alg:BCD}. Then $\{\mathbf{s}^{(j)}\}$ belongs to the level set $\Omega_{\mathcal{L}}(\Gamma)$, which is compact.
\end{lemma}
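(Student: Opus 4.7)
The plan is to show the two assertions separately: (a) $\mathbf{s}^{(j)} \in \Omega_{\mathcal{L}}(\Gamma)$ for every $j \geq 0$, and (b) $\Omega_{\mathcal{L}}(\Gamma)$ is compact. Assertion (b) is essentially given: I would verify that $\Gamma \geq \hat{r}_{k-1} = \mathcal{L}(\mathbf{s}^{k-1,0},\xi,\zeta,\gamma)$ and then invoke Lemma \ref{gradient}\ref{lem_ele:levelset_pro} with $\hat{\mathbf{s}} = \mathbf{s}^{k-1,0}$ and $\hat{\Gamma} = \Gamma$. Assertion (a) reduces to establishing that the augmented Lagrangian is non-increasing along the BCD iterations and that the initial point already sits in $\Omega_{\mathcal{L}}(\Gamma)$.

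For the initial point, I would treat the two branches of the initialization rule \eqref{ini_bcd} in turn. In the branch $\mathbf{s}^{k-1,0} = \mathbf{s}^{k-1}$, the required bound $\mathcal{L}(\mathbf{s}^{k-1,0},\xi,\zeta,\gamma) \leq \Gamma$ is exactly the selection condition. In the branch $\mathbf{s}^{k-1,0} = \mathbf{s}^{0}$, the feasibility of $\mathbf{s}^{0}$ built into Step~0 of Algorithm \ref{Alg:ALM} forces both constraint residuals to vanish, so that $\mathcal{L}(\mathbf{s}^{0},\xi,\zeta,\gamma) = \mathcal{R}(\mathbf{s}^{0})$ irrespective of $\xi,\zeta,\gamma$; combined with the defining inequality $\Gamma \geq \mathcal{L}(\mathbf{s}^{0},\xi^{0},\zeta^{0},\gamma_{0}) = \mathcal{R}(\mathbf{s}^{0})$ from \eqref{def:gamma}, this yields $\mathcal{L}(\mathbf{s}^{k-1,0},\xi,\zeta,\gamma) \leq \Gamma$ as required.

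The heart of the argument is the monotone decrease along one BCD sweep, which I would establish block by block. The $\mathbf{z}$- and $\mathbf{h}$-subproblems \eqref{subpro_z_subproblem}--\eqref{subpro_h_subproblem} are exactly minimized, so $\mathcal{L}(\mathbf{s}_{\mathbf{z}}^{(j)},\xi,\zeta,\gamma) \leq \mathcal{L}(\mathbf{s}^{(j-1)},\xi,\zeta,\gamma)$ and $\mathcal{L}(\mathbf{s}_{\mathbf{h}}^{(j)},\xi,\zeta,\gamma) \leq \mathcal{L}(\mathbf{s}_{\mathbf{z}}^{(j)},\xi,\zeta,\gamma)$ are immediate. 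The $\mathbf{u}$-update is the only nonsmooth nonconvex step, and here I would use the global optimality in \eqref{subpro_u_subproblem} against the comparison point $\mathbf{u}^{(j-1)}$ to get $\mathcal{L}(\mathbf{s}^{(j)},\xi,\zeta,\gamma) + \tfrac{\mu}{2}\|\mathbf{u}^{(j)}-\mathbf{u}^{(j-1)}\|^{2} \leq \mathcal{L}(\mathbf{s}_{\mathbf{h}}^{(j)},\xi,\zeta,\gamma)$, from which $\mathcal{L}(\mathbf{s}^{(j)},\xi,\zeta,\gamma) \leq \mathcal{L}(\mathbf{s}_{\mathbf{h}}^{(j)},\xi,\zeta,\gamma)$ drops out. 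Chaining the three inequalities and iterating over $j$ gives $\mathcal{L}(\mathbf{s}^{(j)},\xi,\zeta,\gamma) \leq \mathcal{L}(\mathbf{s}^{(0)},\xi,\zeta,\gamma) \leq \Gamma$. The only potential obstacle is the nonconvex $\mathbf{u}$-step, but since a global minimizer is available in closed form via $u^{\pm}$, global optimality (which is all we use) is legitimate.
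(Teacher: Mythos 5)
Your proposal is correct and follows essentially the same route as the paper: block-by-block descent of $\mathcal{L}$ along each BCD sweep (exact minimization in the $\mathbf{z}$- and $\mathbf{h}$-steps, global optimality of the proximal $\mathbf{u}$-step tested against $\mathbf{u}^{(j-1)}$), combined with the initialization bound $\mathcal{L}(\mathbf{s}^{(0)},\xi,\zeta,\gamma)\leq\Gamma$ and \cref{gradient}~\ref{lem_ele:levelset_pro} for compactness of the level set. Your explicit treatment of the two branches of \eqref{ini_bcd} --- in particular using the feasibility of $\mathbf{s}^{0}$ so that $\mathcal{L}(\mathbf{s}^{0},\xi,\zeta,\gamma)=\mathcal{R}(\mathbf{s}^{0})$ independently of the multipliers and penalty parameter --- is a detail the paper leaves implicit, but it is not a different argument.
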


\begin{lemma}
\label{lemma: directional derivative}
    The AL function $\mathcal{L}$
    is locally Lipschitz continuous and directionally differentiable on
    $\Omega_{\mathcal{L}}(\Gamma).$
\end{lemma}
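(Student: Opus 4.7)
My plan is to decompose $\mathcal{L}$ into a smooth part and a single nonsmooth ingredient, and then handle that ingredient explicitly using the pointwise structure of the ReLU. Concretely, write
\begin{equation*}
\mathcal{L}(\mathbf{s},\xi,\zeta,\gamma) \;=\; \underbrace{\mathcal{R}(\mathbf{s}) + \tfrac{\gamma}{2}\bigl\|\mathbf{u}-\Psi(\mathbf{h})\mathbf{w}+\xi/\gamma\bigr\|^2 - \tfrac{\|\xi\|^2 + \|\zeta\|^2}{2\gamma}}_{=:\, \mathcal{L}_{\mathrm{sm}}(\mathbf{s})} \;+\; \underbrace{\tfrac{\gamma}{2}\bigl\|\mathbf{h}-(\mathbf{u})_+ + \zeta/\gamma\bigr\|^2}_{=:\, \mathcal{L}_{\mathrm{ns}}(\mathbf{s})}.
\end{equation*}
Here $\mathcal{L}_{\mathrm{sm}}$ is $C^\infty$ jointly in $\mathbf{s}$ because $\mathcal{R}$ is a polynomial in the entries of $\mathbf{s}$ (the loss $\ell$ and regularizer $P$ are quadratic in their respective blocks) and $\Psi(\mathbf{h})\mathbf{w}$ is bilinear in $(\mathbf{h},\mathbf{w})$.

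\textbf{Local Lipschitzness.} Since $\mathcal{L}_{\mathrm{sm}}$ is $C^1$, its gradient is continuous and therefore bounded on the compact set $\Omega_{\mathcal{L}}(\Gamma)$ (compactness is given by Lemma~\ref{gradient}\ref{lem_ele:levelset_pro} since $\Gamma \ge \mathcal{L}(\mathbf{s}^0,\xi^0,\zeta^0,\gamma_0) \ge {\cal L}(\mathbf{s},\xi,\zeta,\gamma)$ for the relevant reference point). This yields local Lipschitzness of $\mathcal{L}_{\mathrm{sm}}$. For $\mathcal{L}_{\mathrm{ns}}$, the map $\mathbf{u}\mapsto (\mathbf{u})_+$ is globally $1$-Lipschitz (componentwise ReLU), and $\mathbf{h}\mapsto \mathbf{h}$ is linear, so $(\mathbf{h},\mathbf{u})\mapsto \mathbf{h}-(\mathbf{u})_+ + \zeta/\gamma$ is globally Lipschitz. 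Composing with the squared norm, which is locally Lipschitz on bounded sets, gives local Lipschitzness of $\mathcal{L}_{\mathrm{ns}}$ on $\Omega_{\mathcal{L}}(\Gamma)$. Summing yields local Lipschitzness of $\mathcal{L}$.

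\textbf{Directional differentiability.} Smooth parts are trivially directionally differentiable via the ordinary gradient. For $\mathcal{L}_{\mathrm{ns}}$, I would use the explicit formula for the directional derivative of the scalar ReLU: for any $u,d\in\mathbb{R}$,
\begin{equation*}
(u)_+'(d) \;=\; \begin{cases} d, & u>0,\\ (d)_+, & u=0,\\ 0, & u<0,\end{cases}
\end{equation*}
which exists for every direction. Applying this componentwise gives directional differentiability of $\mathbf{u}\mapsto(\mathbf{u})_+$. Next, note that $\mathcal{L}_{\mathrm{ns}}$ is the composition of the smooth outer map $v\mapsto \tfrac{\gamma}{2}\|v+\zeta/\gamma\|^2$ with the inner map $(\mathbf{h},\mathbf{u})\mapsto \mathbf{h}-(\mathbf{u})_+$, which is Lipschitz and directionally differentiable. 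The chain rule for directional derivatives (valid when the inner map is Lipschitz and directionally differentiable and the outer map is Fr\'echet differentiable) then gives directional differentiability of $\mathcal{L}_{\mathrm{ns}}$ with
\begin{equation*}
\mathcal{L}_{\mathrm{ns}}'(\mathbf{s};\mathbf{d}) \;=\; \gamma\bigl\langle \mathbf{h}-(\mathbf{u})_+ + \zeta/\gamma,\; \mathbf{d}_{\mathbf{h}} - ((\mathbf{u})_+)'(\mathbf{d}_{\mathbf{u}})\bigr\rangle,
\end{equation*}
for any direction $\mathbf{d}=(\mathbf{d}_{\mathbf{z}};\mathbf{d}_{\mathbf{h}};\mathbf{d}_{\mathbf{u}})$. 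Adding the ordinary directional derivative of $\mathcal{L}_{\mathrm{sm}}$ completes the argument.

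\textbf{Expected difficulty.} The calculations are short; the only subtlety is invoking the correct chain rule for directional derivatives at points where some components of $\mathbf{u}$ vanish (so that the ReLU derivative is only one-sided). Because the outer squaring map is $C^1$ and the inner ReLU map is both Lipschitz and possesses one-sided directional derivatives along every direction, the standard composition result applies, so this is not a genuine obstruction but must be cited carefully.
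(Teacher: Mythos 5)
Your proof is correct, and the Lipschitz-continuity half follows essentially the same path as the paper: both split $\mathcal{L}$ into the smooth piece (which is $C^\infty$, hence Lipschitz on the compact level set $\Omega_{\mathcal{L}}(\Gamma)$ via a bounded gradient) and the ReLU-penalty piece $\tfrac{\gamma}{2}\|\mathbf{h}-(\mathbf{u})_+ +\zeta/\gamma\|^2$, whose Lipschitzness on the bounded set comes from the $1$-Lipschitzness of $(\cdot)_+$ composed with the squared norm; the paper just writes the resulting difference estimate out explicitly. Where you diverge is the directional-differentiability half: the paper disposes of it in one line by observing that $\mathcal{L}$ is piecewise smooth and Lipschitz and citing Mifflin's semismoothness paper, whereas you give a self-contained constructive argument — the explicit one-sided directional derivative of the scalar ReLU ($d$, $(d)_+$, or $0$ according to the sign of $u$) applied componentwise, followed by the chain rule for a Fr\'echet-differentiable outer map composed with a Lipschitz, directionally differentiable inner map, yielding an explicit formula for $\mathcal{L}'_{\mathrm{ns}}(\mathbf{s};\mathbf{d})$. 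Your route is longer but buys an explicit expression for the directional derivative (which the paper only derives later, inside the proof of its Lemma~\ref{lemma:regular}, by a direct limit computation of the one-dimensional pieces $\phi(\bar h,\bar u)$); the paper's route is shorter but leans on an external piecewise-smoothness result. Both are valid, and your invocation of the chain rule is legitimate precisely because the outer squaring map is $C^1$ and the inner map is globally Lipschitz with one-sided directional derivatives in every direction, as you note.
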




We can now show that the stop criterion \cref{cond:cri_BCD} in \cref{Alg:BCD} can be stopped in finite steps, {and condition \eqref{cond:subdiff} in Algorithm \ref{Alg:ALM} is satisfied when \eqref{cond:cri_BCD} holds.  
These results guarantee that the ALM in Algorithm \ref{Alg:ALM} is well-defined, when the subproblems are solved by the BCD method in  \cref{Alg:BCD}.
} 

\begin{theorem}
\label{lemma:stop_iter}
At the $k$-th iteration of ALM  in \cref{Alg:ALM}, the BCD method in \cref{Alg:BCD} for the subproblem \cref{subproblem_2cons} can be stopped within
finite steps to satisfy the stop criterion in \cref{cond:cri_BCD},
which is of order $O(1/(\epsilon_{k-1})^2)$. Moreover, condition \cref{cond:subdiff} of the ALM in \cref{Alg:ALM}  is satisfied at the output ${\bf{s}}^k$ of \cref{Alg:BCD}.
\end{theorem}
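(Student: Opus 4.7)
The plan is to combine a per-iteration sufficient decrease of $\mathcal{L}$ along the BCD iterates with the lower boundedness of $\mathcal{L}$ to obtain both the finite-step termination and the $O\big(1/(\epsilon_{k-1})^2\big)$ complexity, and then to translate the small successive differences guaranteed by \eqref{cond:cri_BCD} into a small element of $\partial\mathcal{L}(\mathbf{s}^k)$ via the Lipschitz estimates of \cref{lem3.2}.

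For the sufficient decrease, I first exploit strong convexity of the two smooth block subproblems. The $\mathbf{z}$-update minimizes $\mathcal{L}(\cdot,\mathbf{h}^{(j-1)},\mathbf{u}^{(j-1)},\xi,\zeta,\gamma)$, whose Hessian blocks $\hat Q_1,\hat Q_2$ from \cref{gradient}\ref{lem_ele:grad_z} satisfy $\hat Q_1\succeq 2\Lambda_1$ and $\hat Q_2\succeq 2\Lambda_2$, so the modulus of strong convexity is at least $2\min_i\lambda_i>0$; global optimality of $\mathbf{z}^{(j)}$ then yields $\mathcal{L}(\mathbf{s}^{(j-1)})-\mathcal{L}(\mathbf{s}_{\mathbf{z}}^{(j)})\ge\alpha_{\mathbf{z}}\|\mathbf{z}^{(j)}-\mathbf{z}^{(j-1)}\|^2$. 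The analogous argument for the $\mathbf{h}$-update, using $D_1,D_2\succeq\gamma I_r$ from \cref{gradient}\ref{lem_ele:grad_h}, produces $\mathcal{L}(\mathbf{s}_{\mathbf{z}}^{(j)})-\mathcal{L}(\mathbf{s}_{\mathbf{h}}^{(j)})\ge\tfrac{\gamma}{2}\|\mathbf{h}^{(j)}-\mathbf{h}^{(j-1)}\|^2$. For the nonsmooth nonconvex $\mathbf{u}$-subproblem \eqref{subpro_u_subproblem}, comparing the value at the global minimizer $\mathbf{u}^{(j)}$ with the feasible candidate $\mathbf{u}^{(j-1)}$ directly delivers $\mathcal{L}(\mathbf{s}_{\mathbf{h}}^{(j)})-\mathcal{L}(\mathbf{s}^{(j)})\ge\tfrac{\mu}{2}\|\mathbf{u}^{(j)}-\mathbf{u}^{(j-1)}\|^2$. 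Summing and identifying $\|\mathbf{s}^{(j)}-\mathbf{s}^{(j-1)}\|^2$ with the sum of the three squared block increments gives $\mathcal{L}(\mathbf{s}^{(j-1)})-\mathcal{L}(\mathbf{s}^{(j)})\ge c\|\mathbf{s}^{(j)}-\mathbf{s}^{(j-1)}\|^2$ with a positive constant $c$ independent of $j$.

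For the complexity count, \cref{gradient}\ref{lem_ele:func_lb} gives $\mathcal{L}\ge-(\|\xi\|^2+\|\zeta\|^2)/(2\gamma)$, and \cref{lemma:sj_in_O} ensures every iterate lies in $\Omega_{\mathcal{L}}(\Gamma)$ so that the constants $L_{1,k-1},L_{2,k-1}$ of \cref{lem3.2} are uniformly valid along the BCD sequence. Telescoping yields $\sum_{j=1}^{J}\|\mathbf{s}^{(j)}-\mathbf{s}^{(j-1)}\|^2\le c^{-1}\big[\mathcal{L}(\mathbf{s}^{(0)})+(\|\xi\|^2+\|\zeta\|^2)/(2\gamma)\big]$. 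If \eqref{cond:cri_BCD} failed for every $j\le J$ with $M_{k-1}:=\max\{L_{1,k-1},L_{2,k-1},\mu\}$, each summand would exceed $(\epsilon_{k-1}/M_{k-1})^2$, forcing $J=O(1/(\epsilon_{k-1})^2)$, which proves the finite termination and the stated complexity.

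For the second claim, the first-order conditions of the three block updates produce $\nabla_{\mathbf{z}}\mathcal{L}(\mathbf{s}_{\mathbf{z}}^{(j)})=0$, $\nabla_{\mathbf{h}}\mathcal{L}(\mathbf{s}_{\mathbf{h}}^{(j)})=0$, and $-\mu(\mathbf{u}^{(j)}-\mathbf{u}^{(j-1)})\in\partial_{\mathbf{u}}\mathcal{L}(\mathbf{s}^{(j)})$ from the proximal optimality in \eqref{subpro_u_subproblem}. The Lipschitz inequalities \eqref{grd_lip_z}--\eqref{grd_lip_h} transport the first two identities to the point $\mathbf{s}^{(j)}$ with residuals controlled by $L_{1,k-1}\|\mathbf{s}^{(j)}-\mathbf{s}^{(j-1)}\|$ and $L_{2,k-1}\|\mathbf{s}^{(j)}-\mathbf{s}^{(j-1)}\|$ respectively. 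Assembling these three block components into a single element $g\in\partial\mathcal{L}(\mathbf{s}^{(j)})$ and invoking \eqref{cond:cri_BCD} yields $\mathrm{dist}\big(0,\partial\mathcal{L}(\mathbf{s}^k)\big)\le \epsilon_{k-1}$, up to an absolute constant from the norm-comparison that is absorbed into the choice of $M_{k-1}$ in the stopping tolerance, which is precisely \eqref{cond:subdiff}. The most delicate point I anticipate is certifying that the assembled triple actually belongs to the limiting subdifferential of the joint function $\mathcal{L}$ rather than only to block-wise subdifferentials; this will rely on the smoothness of $g$ in \eqref{func:subg}, which moves $\nabla g$ out of the subdifferential as a classical gradient, and on the coordinate-wise separability of the nonsmooth term $q$ in \eqref{func:subq}, for which a sum rule for the limiting subdifferential applies directly.
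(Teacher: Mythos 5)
Your proposal is correct and follows essentially the same route as the paper's proof: block-wise sufficient decrease from strong convexity of the $\mathbf{z}$- and $\mathbf{h}$-subproblems plus the proximal term in the $\mathbf{u}$-update, telescoping against the lower bound of $\mathcal{L}$ to get the $O(1/\epsilon_{k-1}^2)$ count, and then assembling the block optimality conditions with the Lipschitz estimates of \cref{lem3.2} to verify \cref{cond:subdiff}. Your use of a generic constant $c$ in the combined decrease inequality is in fact cleaner than the paper's displayed bound (which should read $\min\{\tfrac{\alpha_1}{2},\tfrac{\alpha_2}{2},\tfrac{\mu}{2}\}$ rather than $\max$), and your anticipated "delicate point" about the joint limiting subdifferential is resolved exactly as you suggest, via the smooth-plus-separable splitting $\mathcal{L}=g+q$.
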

\begin{proof}
Since $\mathcal{L}$ is strongly convex with respect to the blocks $\bf z$ and $\bf h$, respectively, from \cref{subpro_z_subproblem} and \cref{subpro_h_subproblem},  we obtain
\begin{align}
   \mathcal{L}(\mathbf{s}^{(j-1)},\xi,\zeta,\gamma) - \mathcal{L}(\mathbf{s}^{(j)}_{\sss {\bf{z}}},\xi,\zeta,\gamma) \geq \tfrac{\alpha_1}{2} \|\mathbf{z}^{(j-1)} - \mathbf{z}^{(j)}\|^2, \label{ineq:rel_fv_z}\\
    \mathcal{L}(\mathbf{s}^{(j)}_{\sss {\bf{z}}},\xi,\zeta,\gamma) - \mathcal{L}(\mathbf{s}^{(j)}_{\sss {\bf{h}}},\xi,\zeta,\gamma) \geq \tfrac{\alpha_2}{2} \|\mathbf{h}^{(j-1)} - \mathbf{h}^{(j)}\|^2, \label{ineq:rel_fv_h}
\end{align}
where $\alpha_1$ and $\alpha_2$ are the minimum eigenvalues of the Hessian matrices $\nabla^2_{\bf{z}} \mathcal{L}({\bf{s}},\xi,\zeta,\gamma)$ and $\nabla^2_{\bf{h}} \mathcal{L}({\bf{s}},\xi,\zeta,\gamma)$ for all {\bf{s}} in the compact set $\Omega_{\mathcal{L}}(\Gamma)$, respectively.
Furthermore, by \cref{subpro_u_subproblem}, we have
\begin{align}
\label{ineq:rel_fv_u}
    \mathcal{L}(\mathbf{s}^{(j)}_{\sss {\bf{h}}},\xi,\zeta,\gamma) - \mathcal{L}(\mathbf{s}^{(j)},\xi,\zeta,\gamma) \geq  \tfrac{\mu}{2}\left\|\mathbf{u}^{(j)} - \mathbf{u}^{(j-1)}\right\|^2.
\end{align}
It follows that
\begin{align*}
    &\mathcal{L}(\mathbf{s}^{(j-1)},\xi,\zeta,\gamma) - \mathcal{L}(\mathbf{s}^{(j)},\xi,\zeta,\gamma)\\
    =\ &\big(\mathcal{L}(\mathbf{s}^{(j-1)},\xi,\zeta,\gamma) - \mathcal{L}(\mathbf{s}^{(j)}_{\sss {\bf{z}}},\xi,\zeta,\gamma)\big)+ \big(\mathcal{L}(\mathbf{s}^{(j)}_{\sss {\bf{z}}},\xi,\zeta,\gamma) - \mathcal{L}(\mathbf{s}^{(j)}_{\sss {\bf{h}}},\xi,\zeta,\gamma)\big)\\
    \quad\quad &
    + \big(\mathcal{L}(\mathbf{s}^{(j)}_{\sss {\bf{h}}},\xi,\zeta,\gamma) - \mathcal{L}(\mathbf{s}^{(j)},\xi,\zeta,\gamma)\big)\\
    \geq \ &\tfrac{\alpha_1}{2} \|\mathbf{z}^{(j)} - \mathbf{z}^{(j-1)}\|^2 + \tfrac{\alpha_2}{2}\|\mathbf{h}^{(j)} - \mathbf{h}^{(j-1)}\|^2 + \tfrac{\mu}{2}\|\mathbf{u}^{(j)} - \mathbf{u}^{(j-1)}\|^2 \\
    \geq \ &\max\{\tfrac{\alpha_1}{2}, \tfrac{\alpha_2}{2}, \tfrac{\mu}{2}\}\|\mathbf{s}^{(j)} - \mathbf{s}^{(j-1)}\|^2.
\end{align*}
Summing up $\mathcal{L}(\mathbf{s}^{(j-1)},\xi,\zeta,\gamma)$ $-$ $\mathcal{L}(\mathbf{s}^{(j)},\xi,\zeta,\gamma)$ from $j = 1$ to $J$, we have
\begin{eqnarray}\label{line409}
    \mathcal{L}(\mathbf{s}^{(0)},\xi,\zeta,\gamma)- \mathcal{L}(\mathbf{s}^{(J)},\xi,\zeta,\gamma)&\geq& \max\{\tfrac{\alpha_1}{2}, \tfrac{\alpha_2}{2}, \tfrac{\mu}{2}\}\sum_{j=1}^{J} \|\mathbf{s}^{(j)} - \mathbf{s}^{(j-1)}\|^2\\
    &\geq& J\max\{\tfrac{\alpha_1}{2}, \tfrac{\alpha_2}{2}, \tfrac{\mu}{2}\}  \min_{\sss j\in [J]} \{\|\mathbf{s}^{(j)} - \mathbf{s}^{(j-1)}\|^2\}.\nonumber
\end{eqnarray}
This, together with  \cref{gradient} \ref{lem_ele:func_lb}, yields that
\begin{eqnarray*}
 \min_{\sss j\in [J]} \{\|\mathbf{s}^{(j)} - \mathbf{s}^{(j-1)}\|^2\} \le
\frac{\mathcal{L}(\mathbf{s}^{(0)},\xi,\zeta,\gamma)   +  \frac{\|\xi\|^2}{2 \gamma} + \frac{{\|\zeta\|}^2}{2\gamma}}{J \max\{\tfrac{\alpha_1}{2}, \tfrac{\alpha_2}{2}, \tfrac{\mu}{2}\}}.
\end{eqnarray*}
It follows that the stop criterion \cref{cond:cri_BCD} holds, as long as
\begin{eqnarray}\label{hatJ}
J \ge \hat J := \left\lceil \frac
{\big(\mathcal{L}(\mathbf{s}^{(0)},\xi,\zeta,\gamma) +  \frac{\|\xi\|^2}{2 \gamma}+\frac{\|\zeta\|^2}{2 \gamma}\big)(\max\{L_{1,k-1}, L_{2,k-1},\mu\})^2}{\max\{\tfrac{\alpha_1}{2}, \tfrac{\alpha_2}{2}, \tfrac{\mu}{2}\}(\epsilon_{k-1})^2}\right\rceil.
\end{eqnarray}
Therefore, at the $k$-th iteration of the ALM  in \cref{Alg:ALM}, the BCD method in  \cref{Alg:BCD}  can be stopped in at most $\hat J$  iterations defined in \eqref{hatJ} and output ${\bf{s}}^k$,  which is of order
$O(1/(\epsilon_{k-1})^2)$.

Once condition \cref{cond:cri_BCD} is satisfied, condition \cref{cond:subdiff} in \cref{Alg:ALM} also holds, which will be proved in the following.
By Step 1 in \cref{Alg:BCD}, the first order optimality condition of the three blocked subproblems \cref{subpro_z_subproblem}, \cref{subpro_h_subproblem} and \cref{subpro_u_subproblem} are
\begin{gather*}
    0 = \nabla_{\bf z}
    \mathcal{L}(\mathbf{s}^{(j)}_{\bf{z}},\xi,\zeta,\gamma),
    \ 0 = \nabla_{\bf{h}} \mathcal{L}(\mathbf{s}^{(j)}_{{\bf h}},\xi,\zeta,\gamma),\\ \ 0 \in \nabla_{\bf{u}} g(\mathbf{s}^{(j)}, \xi, \gamma) + \partial_{\bf{u}} q(\mathbf{s}^{(j)}, \zeta, \gamma) + \mu(\mathbf{u}^{(j)} - \mathbf{u}^{(j-1)}).
\end{gather*}
Furthermore, the limiting subdifferential of the function $\mathcal{L}$ at $\mathbf{s}^{(j)}$ can be written as
\begin{eqnarray*}
    \partial \mathcal{L}(\mathbf{s}^{(j)},\xi,\zeta,\gamma) =
    \left(
    \nabla_{\bf{z}} {\cal L}(\mathbf{s}^{(j)},\xi,\zeta,\gamma);
    \nabla_{\bf{h}} \mathcal{L}(\mathbf{s}^{(j)},\xi,\zeta,\gamma);
    \nabla_{\bf{u}} g(\mathbf{s}^{(j)}, \xi) + \partial_{\bf{u}} q(\mathbf{s}^{(j)}, \zeta)
    \right).
\end{eqnarray*}
Hence
\begin{displaymath}
\left[\begin{array}{c}
    \nabla_{\bf{z}} {\cal L}(\mathbf{s}^{(j)},\xi,\zeta,\gamma) - \nabla_{\bf{z}} \mathcal{L}(\mathbf{s}^{(j)}_{\bf{z}},\xi,\zeta,\gamma)\\
    \nabla_{\bf{h}} \mathcal{L}(\mathbf{s}^{(j)},\xi,\zeta,\gamma) - \nabla_{\bf{h}} \mathcal{L}(\mathbf{s}^{(j)}_{\bf{h}},\xi,\zeta,\gamma)\\
    -\mu(\mathbf{u}^{(j)} - \mathbf{u}^{(j-1)})
\end{array}\right] \in \partial \mathcal{L}(\mathbf{s}^{(j)},\xi,\zeta,\gamma).
\end{displaymath}
By \cref{lem3.2}, we obtain
\begin{eqnarray*}
\label{ineq:sub_vb}
    {\rm dist}\big(0,\partial \mathcal{L}(\mathbf{s}^{(j)},\xi,\zeta,\gamma)\big)
    &\leq& \left\|\begin{array}{c}
    \nabla_{\bf{z}} \mathcal{L}(\mathbf{s}^{(j)},\xi,\zeta,\gamma) - \nabla_{\bf{z}} \mathcal{L}(\mathbf{s}^{(j)}_{\bf{z}},\xi,\zeta,\gamma) \\
    \nabla_{\bf{h}} \mathcal{L}(\mathbf{s}^{(j)},\xi,\zeta,\gamma) - \nabla_{\bf{h}} \mathcal{L}(\mathbf{s}^{(j)}_{\bf{h}},\xi,\zeta,\gamma)\\
    -\mu(\mathbf{u}^{(j)} - \mathbf{u}^{(j-1)})
    \end{array}
    \right\|\\
    &\leq & \max\{L_{1,k-1}, L_{2,k-1},\mu\}\|\mathbf{s}^{(j)} - \mathbf{s}^{(j-1)}\|.
\end{eqnarray*}
Thus condition \cref{cond:cri_BCD} that $\|\mathbf{s}^{(j)} - \mathbf{s}^{(j-1)}\| \leq \epsilon_{k-1}/\max\{L_{1,k-1}, L_{2,k-1},\mu\}$, together with
 ${\bf s}^k = {\bf s}^{(j)}$, implies ${\rm dist}(0, \partial \mathcal{L}(\mathbf{s}^{(k)},\xi,\zeta,\gamma))$ $\leq$ $\epsilon_{k-1}$ in condition \cref{cond:subdiff}.
\end{proof}

{Theorem \ref{lemma:stop_iter} above guarantees that the BCD method in Algorithm \ref{Alg:BCD} terminates within finite steps to meet the stop criterion (\ref{cond:cri_BCD}) for a fixed $\epsilon_{k-1}>0$.} 
 In the rest of this subsection, we discuss the convergence of \cref{Alg:BCD} for the case $\epsilon_{k-1}=0$, i.e.,   we replace the stop criterion
(\ref{cond:cri_BCD}) by
 \begin{equation}\label{cond:criBCD2}
     \left\|\mathbf{s}^{k-1,j} - \mathbf{s}^{k-1, j-1}\right\|=0.
     \end{equation}
   {We will show in Theorem \ref{d-sta} that the BCD method converges to a d-stationary point if $\epsilon_{k-1}=0$.}  
{For this purpose, we first show the following theorem that provides the convergence of the sequences of the function values $\cal L$ with respect to the three blocks, as well as the convergence of the subsequences of the iterative points with respect to the three blocks.}
     \vspace{-0.08in}
\begin{theorem}
\label{theo:BCD_blockSeq_conv}
    Suppose that (\ref{cond:cri_BCD}) is replaced  by
 (\ref{cond:criBCD2}) in \cref{Alg:BCD}. If there is $\bar{j}$ such that (\ref{cond:criBCD2}) holds, then
\begin{equation}\label{Thm4.4-1}
\mathcal{L}(\mathbf{s}^{(\bar{j})}_{\bf{ z}},\xi,\zeta,\gamma)= \mathcal{L}(\mathbf{s}^{(\bar{j})}_{\bf{h}},\xi,\zeta,\gamma)= \mathcal{L}(\mathbf{s}^{(\bar{j})},\xi,\zeta,\gamma) \quad {\rm  and}\quad
  \mathbf{s}^{(\bar{j})}_{\bf{z}}=\mathbf{s}^{(\bar{j})}_{\bf{h}}= \mathbf{s}^{(\bar{j})}.
  \end{equation}
   Otherwise, \cref{Alg:BCD} generates infinite sequences $\{\mathbf{s}^{(j)}_{\bf{z}}\}$, $\{\mathbf{s}^{(j)}_{\bf{h}}\}$ and $\{\mathbf{s}^{(j)}\}$, and the following statements hold.
    \begin{enumerate}[label=(\roman*)]
\item      \label{theo_ele:L(block seq) converge} The sequences $\{\mathcal{L}(\mathbf{s}^{(j)}_{\bf{ z}},\xi,\zeta,\gamma)\}$, $\{\mathcal{L}(\mathbf{s}^{(j)}_{\bf{h}},\xi,\zeta,\gamma)\}$ and $\{\mathcal{L}(\mathbf{s}^{(j)},\xi,\zeta,\gamma)\}$ all converge to a constant $\mathcal{L}^{\ast}$.
        \item \label{theo_ele:block seq converge} There {exists} a subsequence
        $\{j_i\} \subseteq \{j\}$ such that $\{\mathbf{s}^{(j_i)}_{\bf{z}}\}$, $\{\mathbf{s}^{(j_i)}_{\bf{h}}\}$ and $\{\mathbf{s}^{(j_i)}\}$ converging to the same point.
    \end{enumerate}
\end{theorem}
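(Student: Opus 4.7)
The plan is to exploit the three-block descent chain already set up in the proof of Theorem \ref{lemma:stop_iter}, together with the lower bound from Lemma \ref{gradient}(i) and the compactness from Lemma \ref{lemma:sj_in_O}.

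For the finite-termination case at some $\bar j$, condition \eqref{cond:criBCD2} reads $\mathbf{s}^{(\bar j)}=\mathbf{s}^{(\bar j-1)}$, so in particular $\mathbf{z}^{(\bar j)}=\mathbf{z}^{(\bar j-1)}$ and $\mathbf{h}^{(\bar j)}=\mathbf{h}^{(\bar j-1)}$. Substituting directly into the block definitions \eqref{s_z_kj} of the intermediate points yields $\mathbf{s}^{(\bar j)}_{\bf z}=\mathbf{s}^{(\bar j)}_{\bf h}=\mathbf{s}^{(\bar j-1)}=\mathbf{s}^{(\bar j)}$, and the three $\mathcal{L}$-value equalities in \eqref{Thm4.4-1} then follow trivially.

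For the infinite-sequence case, part (i) is obtained by monotonicity plus sandwiching. The descent inequalities \eqref{ineq:rel_fv_z}--\eqref{ineq:rel_fv_u} yield
\begin{equation*}
\mathcal{L}(\mathbf{s}^{(j-1)},\xi,\zeta,\gamma)\ \ge\ \mathcal{L}(\mathbf{s}^{(j)}_{\bf z},\xi,\zeta,\gamma)\ \ge\ \mathcal{L}(\mathbf{s}^{(j)}_{\bf h},\xi,\zeta,\gamma)\ \ge\ \mathcal{L}(\mathbf{s}^{(j)},\xi,\zeta,\gamma),
\end{equation*}
so $\{\mathcal{L}(\mathbf{s}^{(j)},\xi,\zeta,\gamma)\}$ is nonincreasing and, by Lemma \ref{gradient}(i), bounded below. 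It therefore converges to some constant $\mathcal{L}^{\ast}$, and the two intermediate sequences, trapped between consecutive terms of the outer sequence, must share the same limit.

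For part (ii), I would use the telescoping estimate \eqref{line409}: passing $J\to\infty$ together with the lower bound yields $\sum_{j=1}^{\infty}\|\mathbf{s}^{(j)}-\mathbf{s}^{(j-1)}\|^2<\infty$, so the successive differences $\|\mathbf{z}^{(j)}-\mathbf{z}^{(j-1)}\|$, $\|\mathbf{h}^{(j)}-\mathbf{h}^{(j-1)}\|$ and $\|\mathbf{u}^{(j)}-\mathbf{u}^{(j-1)}\|$ all tend to zero. By Lemma \ref{lemma:sj_in_O} the sequence $\{\mathbf{s}^{(j)}\}$ lies in the compact set $\Omega_{\mathcal{L}}(\Gamma)$, so it admits a subsequence $\{\mathbf{s}^{(j_i)}\}$ converging to some $\mathbf{s}^{\ast}$. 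Reading off \eqref{s_z_kj} and using the vanishing block-wise differences along $\{j_i\}$ then forces $\mathbf{s}^{(j_i)}_{\bf z}\to\mathbf{s}^{\ast}$ and $\mathbf{s}^{(j_i)}_{\bf h}\to\mathbf{s}^{\ast}$ along the same subsequence. I do not anticipate a real obstacle here: all the heavy machinery (strong convexity of the $\mathbf z$- and $\mathbf h$-subproblems, the prox term in \eqref{subpro_u_subproblem} enforcing descent in $\mathbf u$, the lower bound, and the compact level set) has already been produced in Lemmas \ref{gradient}, \ref{lem3.2} and \ref{lemma:sj_in_O}; the only care needed is the routine bookkeeping of which blocks coincide at the intermediate points, and picking a single subsequence that simultaneously drives all three block versions to the same common limit.
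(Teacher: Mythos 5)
Your proposal is correct and follows essentially the same route as the paper: the finite-termination case by direct substitution of $\mathbf{s}^{(\bar j)}=\mathbf{s}^{(\bar j-1)}$ into the block definitions, part (i) by the descent chain \eqref{des_L_kj} together with the lower bound from Lemma \ref{gradient}(i), and part (ii) by letting $J\to\infty$ in the telescoping estimate \eqref{line409} to get summable squared successive differences, then combining compactness of $\Omega_{\mathcal{L}}(\Gamma)$ with the block-wise triangle inequalities. No gaps.
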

\begin{proof}
If there is $\bar{j}$ such that (\ref{cond:criBCD2}) holds, then  (\ref{Thm4.4-1}) is derived directly from
$\mathbf{s}^{k-1,\bar{j}}=\mathbf{s}^{k-1, \bar{j}-1}$ and
(\ref{subpro_z_subproblem})-(\ref{subpro_u_subproblem}).

If there is {no} $\bar{j}$ such that (\ref{cond:criBCD2}) holds,
    then \cref{Alg:BCD} generates infinite sequences $\{\mathbf{s}^{(j)}_{\bf{z}}\}$, $\{\mathbf{s}^{(j)}_{\bf{h}}\}$ and $\{\mathbf{s}^{(j)}\}$.

\ref{theo_ele:L(block seq) converge}
By \cref{lemma:sj_in_O}, there exists  an infinite subsequence $\{j_i\} \subseteq \{j\}$ such that ${\bf s}^{(j_i)} \to \bar{\bf{s}}$ as $j_i \to \infty$. Let $\mathcal{L}^* = \mathcal{L}(\bar{\bf{s}})$.
We can easily deduce that statement \ref{theo_ele:L(block seq) converge} holds,
by the descent inequality \cref{des_L_kj} and the lower boundedness of $\{\mathcal{L}(\mathbf{s}^{(j)},\xi,\zeta,\gamma)\}$ according to \cref{gradient} \ref{lem_ele:func_lb}.

\ref{theo_ele:block seq converge}
To further prove that $\{\mathbf{s}^{(j_i)}_{\bf{z}}\}$ and $\{\mathbf{s}^{(j_i)}_{\bf{h}}\}$ also converge to $\bar{{\bf{s}}}$, it is sufficient to prove
\begin{gather}
    \lim_{i \to \infty}\|\mathbf{s}^{(j_i)} - \mathbf{s}^{(j_i)}_{\bf{z}}\| = 0, \label{eq: s_zeq0} \quad
    \lim_{i \to \infty}\|\mathbf{s}^{(j_i)} - \mathbf{s}^{(j_i)}_{\bf{h}}\| = 0.
\end{gather}

Letting $J$ go to infinity and replacing $(j)$ in \eqref{line409} by $(j_i)$, it is easy to have that $\sum_{i=1}^{\infty} \|\mathbf{s}^{(j_i)} - \mathbf{s}^{(j_i-1)}\|^2 < \infty$. Hence,
\begin{align}
    \lim_{i \to \infty}\|\mathbf{s}^{(j_i)}-\mathbf{s}^{(j_i-1)}\| = 0,
\end{align}
which together with
\begin{gather*}
    \|\mathbf{s}^{(j_i)} - \mathbf{s}^{(j_i)}_{\bf{z}}\| \leq \|\mathbf{h}^{(j_i)} - \mathbf{h}^{(j_i-1)}\| + \|\mathbf{u}^{(j_i)} - \mathbf{u}^{(j_i-1)}\|,\\
    \|\mathbf{s}^{(j_i)} - \mathbf{s}^{(j_i)}_{\bf{h}}\| \leq \|\mathbf{u}^{(j_i)} - \mathbf{u}^{(j_i-1)}\|,
\end{gather*}
implies the validity of \eqref{eq: s_zeq0}.

\end{proof}

Now we turn to show that \cref{Alg:BCD} generates a d-stationary point of problem \cref{subproblem_2cons}. For convenience, when considering the directional derivative of a function with respect to a direction and we want to emphasize the blocks of the direction, we adopt a simple expression. For example, if  $d=({d}_{\bf{z}};{d}_{h};{d}_{\bf u})$, we also write ${\cal L}'({\bf s},\xi,\zeta,\gamma;d) = {\cal L}'({\bf s},\xi,\zeta,\gamma; ({d}_{\bf{z}},{d}_{\bf{h}},
{d}_{\bf{u}}))$ instead of ${\cal L}'({\bf s},\xi,\zeta,\gamma; ({d}_{\bf{z}};{d}_{\bf{h}};
{d}_{\bf{u}})).$
\begin{lemma}
\label{lemma:regular}
    If the directional derivatives of $\mathcal{L}$
    at $\bar{\mathbf{s}} \in \Omega_{\mathcal{L}}(\Gamma)$ satisfy
   \begin{displaymath}
        \mathcal{L}'\big(\bar{\mathbf{s}},\xi,\zeta,\gamma ;(d_{\bf{z}}, 0, 0)\big) \geq 0,  \
        \mathcal{L}'\big(\bar{\mathbf{s}},\xi,\zeta,\gamma ; (0, d_{\bf{h}}, 0)\big) \geq 0,  \
        \mathcal{L}'\big(\bar{\mathbf{s}},\xi,\zeta,\gamma ; (0, 0, d_{\bf{u}})\big) \geq 0,
    \end{displaymath}
    along any $d_{\bf{z}} \in \mathbb{R}^{N_\mathbf{w} + N_\mathbf{a}}$, $d_{\bf{h}} \in \mathbb{R}^{rT}$ and $d_{\bf{u}} \in \mathbb{R}^{rT}$, then
    \begin{displaymath}                     \mathcal{L}'(\bar{\mathbf{s}},\xi,\zeta,\gamma ; {d}) \geq 0, \quad \forall\ {d} \in \mathbb{R}^{N_\mathbf{w} + N_\mathbf{a} + 2rT}.
    \end{displaymath}
\end{lemma}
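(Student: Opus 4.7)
The plan is to reduce the lemma to a single block-additivity identity:
\begin{align*}
\mathcal{L}'(\bar{\mathbf{s}},\xi,\zeta,\gamma;d) =\ &\mathcal{L}'(\bar{\mathbf{s}},\xi,\zeta,\gamma;(d_{\bf z},0,0)) + \mathcal{L}'(\bar{\mathbf{s}},\xi,\zeta,\gamma;(0,d_{\bf h},0))\\
 &+ \mathcal{L}'(\bar{\mathbf{s}},\xi,\zeta,\gamma;(0,0,d_{\bf u})),
\end{align*}
for every direction $d=(d_{\bf z};d_{\bf h};d_{\bf u})$. Once this identity is established, the three hypotheses immediately produce three non-negative summands on the right-hand side and the conclusion follows.

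To prove the identity, I would split $\mathcal{L}=A(\mathbf{s})+B(\mathbf{h},\mathbf{u})$ so that all ReLU-induced nonsmoothness is confined to $B$. Here
\[
A(\mathbf{s}):=\mathcal{R}(\mathbf{s})+\tfrac{\gamma}{2}\|\mathbf{u}-\Psi(\mathbf{h})\mathbf{w}+\xi/\gamma\|^2-\tfrac{\|\xi\|^2}{2\gamma}-\tfrac{\|\zeta\|^2}{2\gamma}
\]
is a polynomial in $\mathbf{s}$ (hence $C^1$), and
\[
B(\mathbf{h},\mathbf{u}):=\tfrac{\gamma}{2}\sum_{i=1}^{rT}\phi_i(h_i,u_i),\quad \phi_i(h,u):=\bigl(h-(u)_+ +\zeta_i/\gamma\bigr)^2.
\]
The contribution of $A$ to $\mathcal{L}'(\bar{\mathbf{s}},\xi,\zeta,\gamma;\cdot)$ is the linear functional $d\mapsto\langle\nabla A(\bar{\mathbf{s}}),d\rangle$, which trivially splits across the three blocks, so the task reduces to establishing the analogous additivity for each $\phi_i'$.

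For $\phi_i$ the chain rule yields
\[
\phi_i'\bigl((\bar h_i,\bar u_i);(d_h,d_u)\bigr)=2\bigl(\bar h_i-(\bar u_i)_+ +\zeta_i/\gamma\bigr)\bigl(d_h-\sigma'(\bar u_i;d_u)\bigr),
\]
where $\sigma(u)=(u)_+$ and $\sigma'(\bar u;\cdot)$ is the identity map if $\bar u>0$, the zero map if $\bar u<0$, and $d_u\mapsto\max\{d_u,0\}$ if $\bar u=0$. Since $\sigma'(\bar u;0)=0$ in all three cases, substituting $d_u=0$ and $d_h=0$ separately gives the two single-block values $2(\bar h_i-(\bar u_i)_+ +\zeta_i/\gamma)d_h$ and $-2(\bar h_i-(\bar u_i)_+ +\zeta_i/\gamma)\sigma'(\bar u_i;d_u)$, whose sum reproduces the full expression. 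Summing over $i$ then delivers the desired block-additivity for $B'$, and together with the split for $A$ this finishes the proof.

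The one delicate case is the kink $\bar u_i=0$, where $\sigma'(0;\cdot)$ is the positively homogeneous but \emph{nonlinear} map $d_u\mapsto(d_u)_+$. Additivity nevertheless survives because the inner directional derivative $d_h-\sigma'(\bar u_i;d_u)$ has a clean separation of variables: only the $d_u$-slot carries the nonlinearity, and the $d_h$-slot contributes purely linearly. I expect this one-line verification to be the only nonroutine step; everything else is mechanical.
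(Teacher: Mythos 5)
Your proposal is correct and follows essentially the same route as the paper: split $\mathcal{L}$ into its smooth part (whose directional derivative is linear in $d$ and hence trivially block-additive) plus the separable nonsmooth penalty, reduce the latter to $rT$ one-dimensional functions $\phi_i(h,u)=(h-(u)_+ +\zeta_i/\gamma)^2$, and verify block-additivity of $\phi_i'$ with the only delicate case being the kink $\bar u_i=0$. The sole difference is presentational — the paper expands the difference quotients directly and shows the cross term $\lim_{\lambda\downarrow 0}2\bar d_1\big((\bar u+\lambda\bar d_2)_+-(\bar u)_+\big)=0$ vanishes, while you reach the same identity via the chain rule for the smooth square composed with the Lipschitz, directionally differentiable inner map — so no further comparison is needed.
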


{As problem \cref{subproblem_2cons} is nonsmooth nonconvex, there are many kinds of stationary points for it, such as a Fr\'echet  stationary point, a limiting stationary point, and a d-stationary point. It is known that a limiting stationary point is a Fr\'echet stationary point, and a d-stationary point is a limiting stationary point,  but not vise versa \cite{liu2021linearly}.  
The theorem below guarantees that either the BCD method terminates at a d-stationary point of problem \cref{subproblem_2cons} in finite steps, or any accumulation point of the sequence generated by the BCD method is a d-stationary point of problem \cref{subproblem_2cons}. }
\begin{theorem}
\label{d-sta}
Suppose that (\ref{cond:cri_BCD}) is replaced  by
 (\ref{cond:criBCD2}) in \cref{Alg:BCD}. If there is $\bar{j}$ such that (\ref{cond:criBCD2}) holds, then  $\mathbf{s}^{(\bar{j})}$ is a d-stationary point of problem \cref{subproblem_2cons}.
   Otherwise, \cref{Alg:BCD} generates an infinite sequence $\{\mathbf{s}^{(j)}\}$ and any accumulation point of $\{\mathbf{s}^{(j)}\}$  is a d-stationary point of problem \cref{subproblem_2cons}.
\end{theorem}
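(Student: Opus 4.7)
The strategy is to reduce the d-stationarity of $\bar{\mathbf{s}}$ to the three block-wise partial directional derivative inequalities required by Lemma~\ref{lemma:regular}, which then immediately upgrades them to nonnegativity in every direction. Existence of the relevant directional derivatives on the compact level set $\Omega_{\cal L}(\Gamma)$ is guaranteed by Lemma~\ref{lemma: directional derivative}. Since $\xi,\zeta,\gamma$ are held fixed throughout the BCD, they will be suppressed in the notation below.

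In the finite termination case, Theorem~\ref{theo:BCD_blockSeq_conv} yields $\mathbf{s}^{(\bar{j})}_{\bf z} = \mathbf{s}^{(\bar{j})}_{\bf h} = \mathbf{s}^{(\bar{j})}$, so each block update at step $\bar{j}$ was computed while the remaining blocks already sat at their values in $\mathbf{s}^{(\bar{j})}$. The $\mathbf{z}$- and $\mathbf{h}$-subproblems are smooth and strongly convex, so their first-order conditions force $\nabla_{\bf z}{\cal L}(\mathbf{s}^{(\bar{j})}) = 0$ and $\nabla_{\bf h}{\cal L}(\mathbf{s}^{(\bar{j})}) = 0$, giving the first two block-wise directional inequalities trivially. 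For the $\mathbf{u}$-block, the global minimality of $\mathbf{u}^{(\bar{j})}$ in the proximal subproblem \eqref{subpro_u_subproblem}, combined with $\mathbf{u}^{(\bar{j})} = \mathbf{u}^{(\bar{j}-1)}$, yields ${\cal L}(\mathbf{s}^{(\bar{j})}) \leq {\cal L}(\mathbf{z}^{(\bar{j})}, \mathbf{h}^{(\bar{j})}, \mathbf{u}) + \tfrac{\mu}{2}\|\mathbf{u} - \mathbf{u}^{(\bar{j})}\|^2$ for every $\mathbf{u}$. Substituting $\mathbf{u} = \mathbf{u}^{(\bar{j})} + \lambda d_{\bf u}$ with $\lambda > 0$, dividing by $\lambda$, and letting $\lambda \downarrow 0$ delivers ${\cal L}'(\mathbf{s}^{(\bar{j})}; (0,0,d_{\bf u})) \geq 0$, and Lemma~\ref{lemma:regular} concludes.

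In the infinite case, let $\bar{\mathbf{s}}$ be an accumulation point. By Theorem~\ref{theo:BCD_blockSeq_conv} pick a subsequence $\{j_i\}$ along which $\mathbf{s}^{(j_i)}$, $\mathbf{s}^{(j_i)}_{\bf z}$ and $\mathbf{s}^{(j_i)}_{\bf h}$ all converge to $\bar{\mathbf{s}}$; its proof also gives $\|\mathbf{s}^{(j_i)} - \mathbf{s}^{(j_i-1)}\| \to 0$, so $\mathbf{s}^{(j_i-1)} \to \bar{\mathbf{s}}$ as well. For the $\mathbf{z}$-subproblem, the global minimality ${\cal L}(\mathbf{z}^{(j_i)}, \mathbf{h}^{(j_i-1)}, \mathbf{u}^{(j_i-1)}) \leq {\cal L}(\mathbf{z}, \mathbf{h}^{(j_i-1)}, \mathbf{u}^{(j_i-1)})$ for every fixed $\mathbf{z}$ passes to the limit by continuity of ${\cal L}$, producing ${\cal L}(\bar{\mathbf{s}}) \leq {\cal L}(\mathbf{z}, \bar{\mathbf{h}}, \bar{\mathbf{u}})$. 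Hence $\bar{\mathbf{z}}$ is a global minimizer of the smooth $\mathbf{z}$-problem and ${\cal L}'(\bar{\mathbf{s}}; (d_{\bf z}, 0, 0)) \geq 0$ for every $d_{\bf z}$. The argument for the $\mathbf{h}$-block is completely analogous.

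The $\mathbf{u}$-block is the main obstacle, because its subproblem is nonsmooth, nonconvex, and carries a proximal penalty that is not present at the limit problem. Passing the minimality inequality
\begin{equation*}
{\cal L}(\mathbf{z}^{(j_i)}, \mathbf{h}^{(j_i)}, \mathbf{u}^{(j_i)}) + \tfrac{\mu}{2}\|\mathbf{u}^{(j_i)} - \mathbf{u}^{(j_i-1)}\|^2 \leq {\cal L}(\mathbf{z}^{(j_i)}, \mathbf{h}^{(j_i)}, \mathbf{u}) + \tfrac{\mu}{2}\|\mathbf{u} - \mathbf{u}^{(j_i-1)}\|^2
\end{equation*}
to the limit $i \to \infty$, while using $\mathbf{u}^{(j_i-1)} \to \bar{\mathbf{u}}$ to kill the left proximal term, yields ${\cal L}(\bar{\mathbf{s}}) \leq {\cal L}(\bar{\mathbf{z}}, \bar{\mathbf{h}}, \mathbf{u}) + \tfrac{\mu}{2}\|\mathbf{u} - \bar{\mathbf{u}}\|^2$ for every $\mathbf{u}$. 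Substituting $\mathbf{u} = \bar{\mathbf{u}} + \lambda d_{\bf u}$, dividing by $\lambda > 0$, and taking $\lambda \downarrow 0$ produces ${\cal L}'(\bar{\mathbf{s}}; (0,0,d_{\bf u})) \geq 0$. Lemma~\ref{lemma:regular} then upgrades the three block-wise inequalities to ${\cal L}'(\bar{\mathbf{s}}; d) \geq 0$ for every $d$, which is exactly the d-stationarity of $\bar{\mathbf{s}}$ for problem \cref{subproblem_2cons}.
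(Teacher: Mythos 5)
Your proposal is correct and follows essentially the same route as the paper: in both cases you reduce d-stationarity to the three block-wise directional-derivative inequalities and invoke Lemma~\ref{lemma:regular}, obtaining them from the block minimality properties of \cref{subpro_z_subproblem}--\cref{subpro_u_subproblem}, with $\|\mathbf{u}^{(j_i)}-\mathbf{u}^{(j_i-1)}\|\to 0$ killing the residual proximal term at the limit exactly as in the paper's argument. The only cosmetic differences are that you phrase the smooth blocks via vanishing gradients (finite case) and via limits of minimality over arbitrary competitors rather than over perturbed iterates (infinite case), both of which are equivalent to the paper's treatment.
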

\begin{proof}
If there is $\bar j$ such that  (\ref{cond:criBCD2}) holds, then {${\bf{s}}^{k-1,\bar{j}} = {{\bf s}^{k-1,\bar{j}-1}}$, i.e., ${\bf s}^{(\bar{j})} = {\bf s}^{(\bar{j}-1)}$. This, combined with (\ref{Thm4.4-1}) of Theorem  \ref{theo:BCD_blockSeq_conv}, yields that $\mathbf{s}^{(\bar{j})}_{\bf{z}}=\mathbf{s}^{(\bar{j})}_{\bf{h}}= \mathbf{s}^{(\bar{j})} = {\bf s}^{(\bar{j}-1)}.$
Thus by \cref{subpro_z_subproblem}-\cref{subpro_u_subproblem} in  \cref{Alg:BCD}, we have for any $\lambda>0$ and any ${d}_{\bf{z}} \in \mathbb{R}^{N_{\bf{w}}+N_{\bf{a}}}$, ${d}_{\bf{h}} \in \mathbb{R}^{rT}$, ${d}_{\bf{u}} \in \mathbb{R}^{rT}$, 
\begin{gather*}
    \mathcal{L}(\mathbf{s}^{(\bar{j})},\xi,\zeta,\gamma) \leq \mathcal{L}\big(\mathbf{s}^{(\bar{j})} + \lambda ({d}_{\bf{z}}, {0},{0}),\xi,\zeta,\gamma\big), \\
    \mathcal{L}(\mathbf{s}^{(\bar{j})},\xi,\zeta,\gamma) \leq \mathcal{L}\big(\mathbf{s}^{(\bar{j})} + \lambda ({0}, {d}_{\bf{h}},{0}),\xi,\zeta,\gamma\big),\\
    \mathcal{L}(\mathbf{s}^{(\bar{j})},\xi,\zeta,\gamma) \leq \mathcal{L}\big(\mathbf{s}^{(\bar{j})} + \lambda ({0}, {0},{d}_{\bf{u}}),\xi,\zeta,\gamma\big).
\end{gather*}}
{By Lemma \ref{lemma: directional derivative} and  the definition of the directional derivative, we get for any ${d}_{\bf{z}}$, ${d}_{\bf{h}}$, ${d}_{\bf{u}}$,
\begin{eqnarray*}    & \mathcal{L}'({\mathbf{s}}^{(\bar{j})},\xi,\zeta,\gamma ; ({d}_{\bf{z}}, {0},{0})) \geq 0,\  
\mathcal{L}'({\mathbf{s}}^{(\bar{j})},\xi,\zeta,\gamma ; ( {0}, {d}_{\bf{h}}, {0})) \geq 0, \\
& \mathcal{L}'({\mathbf{s}}^{(\bar{j})},\xi,\zeta,\gamma ; ({0}, {0}, {d}_{\bf{u}})) \geq 0.
\end{eqnarray*}
 The above inequalities, along with \cref{lemma:regular}, yields that $                 \mathcal{L}'({\mathbf{s}}^{(\bar{j})},\xi,\zeta,\gamma ; {d}) \geq 0$ for any  ${d} \in \mathbb{R}^{N_\mathbf{w} + N_\mathbf{a} + 2rT}.
   $ 
    Hence, ${\bf {s}}^{(\bar{j})}$ is a d-stationary point of problem \cref{subproblem_2cons}. } 

If there is {no}  $\bar{j}$ such that (\ref{cond:criBCD2}) holds,
    then \cref{Alg:BCD} generates an infinite sequence  $\{\mathbf{s}^{(j)}\}$.
By \cref{subpro_u_subproblem}, we have
\begin{eqnarray*}
  {\cal L}({\bf s}^{(j)},\xi,\zeta,\gamma) \le {\cal L}({\bf s}^{(j)},\xi,\zeta,\gamma) + \frac{\mu}{2}\|{\bf u}^{(j)} - {\bf u}^{(j-1)}\|^2 \le {\cal L}({\bf s}_{\bf h}^{(j)}, \xi,\zeta,\gamma).
\end{eqnarray*}
Letting $j\to \infty$ in the above inequalities and using \cref{theo:BCD_blockSeq_conv} (i), we have
\begin{eqnarray*}
\lim_{j\to \infty} \|{\bf u}^{(j)} - {\bf{u}}^{(j-1)}\| = 0.
\end{eqnarray*}
By \cref{theo:BCD_blockSeq_conv} (ii), let
  $\{\mathbf{s}^{(j_i)}_{\sss z}\}$, $\{\mathbf{s}^{(j_i)}_{\sss h}\}$ and $\{\mathbf{s}^{(j_i)}\}$ be any convergent subsequences with limit $\bar{
  \bf{s}}$.
  Furthermore, {by} \cref{subpro_z_subproblem}-\cref{subpro_u_subproblem} in  \cref{Alg:BCD}, we have {for any $\lambda>0$ and any ${d}_{\bf{z}} \in \mathbb{R}^{N_{\bf{w}}+N_{\bf{a}}}$, ${d}_{\bf{h}} \in \mathbb{R}^{rT}$, ${d}_{\bf{u}} \in \mathbb{R}^{rT}$,}
\begin{gather*}
    \mathcal{L}(\mathbf{s}^{(j_i)}_{\bf{z}},\xi,\zeta,\gamma) \leq \mathcal{L}\big(\mathbf{s}^{(j_i)}_{\bf{z}} + \lambda ({d}_{\bf{z}}, {0},{0}),\xi,\zeta,\gamma\big), \\ \mathcal{L}\big(\mathbf{s}^{(j_i)}_{\bf{h}}, \xi,\zeta,\gamma\big) \leq \mathcal{L}\big(\mathbf{s}^{(j_i)}_{\bf{h}} + \lambda ({0}, {d}_{\bf{h}},{0}),\xi,\zeta,\gamma\big),\\
    \mathcal{L}(\mathbf{s}^{(j_i)},\xi,\zeta,\gamma) \leq \mathcal{L}\big(\mathbf{s}^{(j_i)} + \lambda ({0}, {0},{d}_{\bf{u}}),\xi,\zeta,\gamma\big) + \tfrac{\mu}{2}\|\mathbf{u}^{(j_i)} + \lambda d_{\bf{u}} - \mathbf{u}^{(j_i-1)}\|^2.
\end{gather*}
As $i \to \infty$, the above equality and inequalities imply that {for any $\lambda>0$} and  any ${d}_{\bf{z}}$, ${d}_{\bf{h}}$, ${d}_{\bf{u}}$,
\begin{equation*}
\begin{split}
\label{ieq:L_bars_d}
    \mathcal{L}(\bar{\mathbf{s}},\xi,\zeta,\gamma) \leq \mathcal{L}\big(\bar{\mathbf{s}} + \lambda ({d}_{\bf{z}}, {0},{0}),\xi,\zeta,\gamma\big), & \ \mathcal{L}(\bar{\mathbf{s}},\xi,\zeta,\gamma) \leq \mathcal{L}\big(\bar{\mathbf{s}} + \lambda ({0}, {d}_{\bf{h}}, {0}),\xi,\zeta,\gamma\big),\\
    \mathcal{L}(\bar{\mathbf{s}},\xi,\zeta,\gamma) \leq \mathcal{L}\big(\bar{\mathbf{s}} + \lambda ({0}, {0},& {d}_{\bf{u}}),\xi,\zeta,\gamma\big)  + \tfrac{\mu}{2} \lambda^2 \|d_{\bf{u}}\|^2.
\end{split}
\end{equation*}
%
%
By \cref{lemma: directional derivative} and the definition of directional derivative, it follows that
\begin{align*}
    \mathcal{L}'(\bar{\mathbf{s}},\xi,\zeta,\gamma ; ({d}_{\bf{z}}, {0},{0})) \geq 0, \
    \mathcal{L}'(\bar{\mathbf{s}},\xi,\zeta,\gamma ; ( {0}, {d}_{\bf{h}}, {0})) \geq 0, \
    \mathcal{L}'(\bar{\mathbf{s}},\xi,\zeta,\gamma ; ({0}, {0}, {d}_{\bf{u}})) \geq 0,
\end{align*}
for any ${d}_{\bf{z}}$, ${d}_{\bf{h}}$ and ${d}_{\bf{u}}$.
The above inequalities, along with \cref{lemma:regular}, yield that $\bar{\bf {s}}$ is a d-stationary point of problem \cref{subproblem_2cons}.
\end{proof}

\subsection{Convergence analysis of \cref{Alg:ALM}}
\label{subsec:conv_alm}

By \cref{lemma:stop_iter}, the ALM in \cref{Alg:ALM} is well-defined, since Step 1 can always be fulfilled in finite steps by the BCD method in \cref{Alg:BCD}.

{It is well known that the classical ALM may converge to an infeasible point. In contrast, the following theorem guarantees that any accumulation point of the ALM in Algorithm \ref{Alg:ALM} is a feasible point. The delicate strategy for updating the penalty parameter $\gamma_k$ in Step 3 of Algorithm \ref{Alg:ALM} plays an important role in the proof of the theorem.}

\begin{theorem}
\label{feasiblilty}
Let $\left\{\mathbf{s}^{k}\right\}$ be the sequence generated by  \cref{Alg:ALM}. Then the following statements hold.
    \begin{enumerate}[label=(\roman*)]
    \item \label{theo_item:ALM_feas} $\lim_{k \rightarrow \infty}\left\|\mathbf{u}^{k} - \Psi(\mathbf{h}^{k})\mathbf{w}^{k}\right\|=0$ and  $\lim _{k \rightarrow \infty}\left\|\mathbf{h}^{k} - (\mathbf{u}^{k})_+\right\|=0$.

     \item \label{theo_item:ALM_ELP}
     There exists at least one accumulation point of $\{{\bf{s}}^k\}$, and any accumulation point is a feasible point of \eqref{prob:ror}.
    \end{enumerate}
\end{theorem}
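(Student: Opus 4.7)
The plan is to split part (i) by whether $\{\gamma_k\}$ is bounded, use the polynomial growth bounds on the multipliers supplied by the update rule \eqref{update:gamma} in the unbounded case, and then deduce part (ii) by establishing a uniform upper bound on $\mathcal{R}({\bf s}^k)$. The starting point is the descent property of \cref{Alg:BCD}: combined with the initialization \eqref{ini_bcd} (for which a direct computation gives $\mathcal{L}({\bf s}^0,\xi,\zeta,\gamma) = \mathcal{R}({\bf s}^0) \leq \Gamma$ regardless of $\xi,\zeta,\gamma$, since ${\bf s}^0$ is feasible), it yields $\mathcal{L}({\bf s}^k,\xi^{k-1},\zeta^{k-1},\gamma_{k-1}) \leq \Gamma$ for every $k$. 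Substituting the multiplier update \eqref{lar_mul_update} into the AL function \eqref{alm function for 2constraints}, the cross terms telescope and produce the workhorse energy inequality
\[
\mathcal{R}({\bf s}^k) + \frac{\|\xi^k\|^2 + \|\zeta^k\|^2}{2\gamma_{k-1}} \leq \Gamma + \frac{\|\xi^{k-1}\|^2 + \|\zeta^{k-1}\|^2}{2\gamma_{k-1}}.
\]

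To prove (i), let $M_k := \max\{\|\mathcal{C}_1({\bf s}^k)\|,\|\mathcal{C}_2({\bf s}^k)\|\}$. If $\{\gamma_k\}$ is bounded, then rule \eqref{update:gamma}, which multiplies $\gamma$ by at least $1/\eta_2 > 1$ whenever \eqref{cond:update_gamma} fails, permits only finitely many failures; hence \eqref{cond:update_gamma} holds for all $k$ beyond some $k_0$, and $M_k \leq \eta_1^{k-k_0} M_{k_0} \to 0$. If instead $\gamma_k \to \infty$, the index set $K_B$ at which \eqref{cond:update_gamma} fails is infinite, and \eqref{update:gamma} enforces $\|\xi^k\|, \|\zeta^k\| \leq \gamma_k^{1/(1+\eta_3)}$ for $k \in K_B$. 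The hypothesis $\eta_3 > 1$ is decisive here, for it makes $\|\xi^k\|^2/\gamma_k \leq \gamma_k^{(1-\eta_3)/(1+\eta_3)} \to 0$. Combining this with the monotonicity $\gamma_{k-1} \leq \gamma_k$ and the geometric decay $M_j \leq \eta_1^{j-k'} M_{k'}$ on the $K_A$-stretches between consecutive $K_B$-indices $k'$, one propagates $\|\xi^k\|/\gamma_{k-1} \to 0$ (and likewise for $\zeta^k$) uniformly in $k$, so that $M_k \leq (\|\xi^k\|+\|\xi^{k-1}\|)/\gamma_{k-1} \to 0$.

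For (ii), the uniform control on $(\|\xi^{k-1}\|^2+\|\zeta^{k-1}\|^2)/\gamma_{k-1}$ obtained while proving (i) turns the energy inequality into $\mathcal{R}({\bf s}^k) \leq \Gamma + C$ for some constant $C$. By \cref{lemma:solutionset_nonempty}, the level set $\mathcal{D}_{\mathcal{R}}(\Gamma+C)$ is compact, so $\{{\bf s}^k\}$ is bounded and has an accumulation point $\bar{\bf s}$; continuity of $\mathcal{C}_1$ and $\mathcal{C}_2$ together with part (i) then force $\mathcal{C}_1(\bar{\bf s}) = 0 = \mathcal{C}_2(\bar{\bf s})$, i.e., $\bar{\bf s} \in \mathcal{F}$.

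The step I expect to be the hardest is the bookkeeping in the unbounded-$\gamma_k$ case. The polynomial bound $\|\xi^k\| \leq \gamma_k^{1/(1+\eta_3)}$ is only directly available at $K_B$-indices, whereas the ratio $\gamma_k/\gamma_{k-1}$ may be arbitrarily large (when $\gamma_k = \|\xi^k\|^{1+\eta_3}$ dominates $\gamma_{k-1}/\eta_2$). Controlling how $\|\xi^k\|$ and $\|\zeta^k\|$ evolve across the intervening $K_A$-intervals, where $\gamma_j$ is frozen but $M_j$ decays at rate $\eta_1$, and verifying that the exponent $1/(1+\eta_3) < 1/2$ is small enough to absorb these jumps, is the delicate technical step that couples the two cases into a single uniform estimate.
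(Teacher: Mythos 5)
Your part (i) follows the paper's argument essentially step for step (the paper phrases the dichotomy as the index set $\mathcal{K}$ of penalty increases being finite or infinite rather than $\{\gamma_k\}$ being bounded or not, but these are equivalent), and your ``workhorse energy inequality'' is a correct and clean restatement of the paper's inequality obtained from $\mathcal{L}(\mathbf{s}^k,\xi^{k-1},\zeta^{k-1},\gamma_{k-1})\leq\Gamma$ together with the multiplier update. The propagation of $\|\xi^{k-1}\|/\gamma_{k-1}\to 0$ across the frozen-$\gamma$ stretches via the geometric decay of the feasibility violation is exactly the paper's computation with $l_k$ and the telescoped sums.

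Part (ii), however, has a genuine gap: you deduce boundedness of $\{\mathbf{s}^k\}$ from $\mathcal{R}(\mathbf{s}^k)\leq\Gamma+C$ by invoking the compactness of the level set in \cref{lemma:solutionset_nonempty}, but $\mathcal{D}_{\mathcal{R}}(\rho)$ is by definition a subset of the feasible set $\mathcal{F}$, and its compactness proof bounds the $\mathbf{h}$-block precisely by using the feasibility relation $\mathbf{h}=(\mathbf{u})_+$. The ALM iterates are not feasible, and neither $\ell$ nor the regularizer $P$ in \eqref{P} contains a coercive term in $\mathbf{h}$ (take $\mathbf{a}=0$ and let $\|\mathbf{h}\|\to\infty$: $\mathcal{R}$ stays bounded), so $\mathcal{R}(\mathbf{s}^k)\leq\Gamma+C$ alone does not bound $\mathbf{h}^k$ and the step fails as written. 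The paper instead keeps the quadratic penalty terms in the energy inequality, i.e.
\begin{equation*}
\mathcal{R}(\mathbf{s}^k)+\tfrac{\gamma_{k-1}}{2}\bigl\|\mathbf{h}^k-(\mathbf{u}^k)_+ +\tfrac{\zeta^{k-1}}{\gamma_{k-1}}\bigr\|^2\leq\Gamma+\tfrac{\|\xi^{k-1}\|^2}{2\gamma_{k-1}}+\tfrac{\|\zeta^{k-1}\|^2}{2\gamma_{k-1}},
\end{equation*}
and argues as in \cref{gradient}\ref{lem_ele:levelset_pro}: boundedness of $\mathcal{R}(\mathbf{s}^k)$ yields boundedness of $\mathbf{w}^k,\mathbf{a}^k,\mathbf{u}^k$, and then the bounded penalty term (with $\gamma_{k-1}\geq\gamma_0$ and $\|\zeta^{k-1}\|/\gamma_{k-1}$ bounded) yields boundedness of $\mathbf{h}^k$. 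Your own inequality already contains this information in the guise of $\|\zeta^k\|^2/(2\gamma_{k-1})=\tfrac{\gamma_{k-1}}{2}\|\mathcal{C}_2(\mathbf{s}^k)+\zeta^{k-1}/\gamma_{k-1}\|^2$, so the repair is short, but it must be made. A secondary caveat: your claimed ``uniform control on $(\|\xi^{k-1}\|^2+\|\zeta^{k-1}\|^2)/\gamma_{k-1}$'' does not follow from part (i) for all $k$ in the unbounded case (there you only obtained $\|\xi^{k-1}\|/\gamma_{k-1}\to 0$, and multiplying by $\gamma_{k-1}\to\infty$ is indeterminate); the paper sidesteps this by establishing the bound, and hence boundedness of the iterates, only along the subsequence $k-1\in\mathcal{K}$, which is all that is needed to produce an accumulation point.
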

\begin{proof}
\ref{theo_item:ALM_feas}
Let the index set
\begin{align}
    \mathcal{K} := \left\{k: \gamma_{\sss k}= \max \{\gamma_{\sss k-1} / \eta_2,\|\xi^{k}\|^{1+\eta_3}, \|\zeta^{k}\|^{1+\eta_3} \} \right\}.
\end{align}
If $\cal{K}$ is a finite set, then there exists $K\in \mathbb{N}_+$, such that for all $k>K$,
\begin{eqnarray}
\label{inq:ForProof_feasibility}
    \max\left\{
    \|{\cal C}_1({\bf{s}}^k)\|, \|{\cal C}_2({\bf{s}}^k)\|\right\}
    &\leq&
    \eta_1  \max\left\{
    \|{\cal C}_1({\bf{s}}^{k-1})\|, \|{\cal C}_2({\bf{s}}^{k-1})\|\right\} \nonumber\\
    &\leq&   \eta_1^{k-K} \max\left\{\|{\cal C}_1({\bf{s}}^K)\|, \|{\cal C}_2({\bf{s}}^K)\|\right\}.
\end{eqnarray}
Since $\eta_1 \in (0, 1)$, we get  $\lim_{k \to \infty}$ $\max\big\{\|\mathbf{u}^{k} - \Psi(\mathbf{h}^{k})\mathbf{w}^{k}\|, \|\mathbf{h}^{k} - (\mathbf{u}^{k})_{+}\|\big\} = 0.$
The statement (i) can thus be proved for this case.

Otherwise, $\cal{K}$ is an infinite set. Then for those $k-1\in {\cal{K}}$,
$$\max\left\{\tfrac{\left\|\xi^{k-1}\right\|}{\gamma_{\sss k-1}},\tfrac{\left\|\zeta^{k-1}\right\|}{\gamma_{\sss k-1}}\right\}  \leq \left(\gamma_{\sss k-1}\right)^{\tfrac{-\eta_3}{1+\eta_3}},\
\max\left\{\tfrac{\left\|\xi^{k-1}\right\|^2}{\gamma_{\sss k-1}}, \tfrac{\left\|\zeta^{k-1}\right\|^2}{\gamma_{\sss k-1}}\right\} \leq \left(\gamma_{\sss k-1}\right)^{\tfrac{1-\eta_3}{1+\eta_3}}.$$
The above inequalities, together with $\eta_3 > 1$
yields that
{\small{\begin{gather}
\label{lim_K:xixi2/gamma=0}
    \lim_{k \to \infty, k-1 \in \mathcal{K}} \  \max\left\{\frac{\left\|\xi^{k-1}\right\|}{\gamma_{\sss k-1}},  \frac{\left\|\zeta^{k-1}\right\|}{\gamma_{\sss k-1}},\frac{\left\|\xi^{k-1}\right\|^2}{\gamma_{\sss k-1}}, \frac{\left\|\zeta^{k-1}\right\|^2}{\gamma_{\sss k-1}}  \right\} = 0.
\end{gather}}}Recalling  \eqref{alm function for 2constraints}, and
employing condition \cref{cond:func_value bounded} and Step 1 of \cref{Alg:BCD}, we have
\begin{equation}
\begin{split}
\label{leq:ubound_feas}
    0 \leq &\big\|\mathbf{u}^{k}-\Psi(\mathbf{h}^{k})\mathbf{w}^{k}+\tfrac{\xi^{k-1}}{\gamma_{\sss k-1}}\big\|^2 + \big\|\mathbf{h}^{k}-(\mathbf{u}^{k})_{+}+\tfrac{\zeta^{k-1}}{\gamma_{\sss k-1}}\big\|^2 \\
    \leq &\tfrac{2}{\gamma_{\sss k-1}}\big(\Gamma - \mathcal{R} (\mathbf{s}^{k})\big) + \big(\tfrac{\|\xi^{k-1}\|}{\gamma_{\sss k-1}}\big)^2 + \big(\tfrac{\|\zeta^{k-1}\|}{\gamma_{\sss k-1}}\big)^2.
\end{split}
\end{equation}
Then by \cref{lim_K:xixi2/gamma=0} and the lower boundedness of $\left\{\mathcal{R} (\mathbf{s}^{k}) \right\}$, we have
\begin{align}
\label{eq:lim con in K}
    \lim_{k \to \infty, k-1 \in \mathcal{K}} \ \|\mathbf{u}^{k}-\Psi(\mathbf{h}^{k})\mathbf{w}^{k}\| = 0 \quad \text{and} \quad \lim_{k \to \infty, k-1 \in \mathcal{K}} \ \|\mathbf{h}^{k}-(\mathbf{u}^{k})_{+}\| = 0.
\end{align}

To extend the results in \cref{eq:lim con in K} to any $k > K$, let $l_k$ denote the largest element in $\mathcal{K}$ satisfying $l_k < k$. If $l_k = k-1$, the limitations are the same as \cref{eq:lim con in K}. If $l_k < k-1$, let us define an index set ${\mathcal{I}}_{k} := \left\{i : l_k<i < k \right\}$. The updating rule for the penalty parameter, as stated in \cref{update:gamma}, implies that $\gamma_{i} = \gamma_{l_k}$. This, combined with the updating rules for the Lagrangian multipliers, yields that for all $i \in {\mathcal{I}}_k$, the following holds:
\begin{align}
\frac{\|\xi^{i}\|}{\gamma_{i}} &= \frac{\|\xi^{i}\|}{\gamma_{i-1}} \leq \frac{\|\xi^{i-1}\|}{\gamma_{i-1}} + \left\|\mathbf{u}^{i} - \Psi(\mathbf{h}^{i})\mathbf{w}^{i}\right\|, \label{inq:xi1/gammaLEQpre}\\
\frac{\|\zeta^{i}\|}{\gamma_{i}} &= \frac{\|\zeta^{i}\|}{\gamma_{i-1}} \leq \frac{\|\zeta^{i-1}\|}{\gamma_{i-1}} + \left\|\mathbf{h}^{i} - (\mathbf{u}^{i})_+\right\| \label{inq:xi2/gammaLEQpre}.
\end{align}
Summing up inequalities \cref{inq:xi1/gammaLEQpre} and \cref{inq:xi2/gammaLEQpre} for every $i \in {\mathcal{I}}_k$, we have
\begin{align}
 \frac{\|\xi^{k-1}\|}{\gamma_{\sss k-1}} &\leq \frac{\|\xi^{l_k}\|}{\gamma_{l_k}} + \sum_{i=1}^{k-l_k-1}\left\|\mathbf{u}^{k-i} - \Psi(\mathbf{h}^{k-i})\mathbf{w}^{k-i}\right\|, \label{inq:xi/gamma_lk}\\
 \frac{\|\zeta^{k-1}\|}{\gamma_{\sss k-1}} &\leq \frac{\|\zeta^{l_k}\|}{\gamma_{l_k}} + \sum_{i=1}^{k-l_k-1}\left\|\mathbf{h}^{k-i} - (\mathbf{u}^{k-i})_+\right\|.\label{inq:zeta/gamma_lk}
\end{align}
By the updating rule of $\gamma_{\sss k}$ in \eqref{cond:update_gamma}, \cref{inq:xi/gamma_lk} and \cref{inq:zeta/gamma_lk}, we obtain
{\small
\begin{align*}
 \frac{\|\xi^{k-1}\|}{\gamma_{\sss k-1}}
 \leq \frac{\|\xi^{l_k}\|}{\gamma_{l_k}}
 + \frac{\eta_1}
 {1-\eta_1}\max\left\{\left\|\mathbf{u}^{l_k+1} - \Psi(\mathbf{h}^{l_k+1})\mathbf{w}^{l_k+1}\right\|, \left\|\mathbf{h}^{l_k+1} - (\mathbf{u}^{l_k+1})_+\right\|\right\}, \notag \\
 \frac{\|\zeta^{k-1}\|}{\gamma_{\sss k-1}} \leq \frac{\|\zeta^{l_k}\|}{\gamma_{l_k}} + \frac{\eta_1
 }{1-\eta_1}\max\left\{\left\|\mathbf{u}^{l_k+1} - \Psi(\mathbf{h}^{l_k+1})\mathbf{w}^{l_k+1}\right\|, \left\|\mathbf{h}^{l_k+1} - (\mathbf{u}^{l_k+1})_+\right\|\right\}.
\end{align*}}This, together with \cref{lim_K:xixi2/gamma=0}, \cref{eq:lim con in K} and $\eta_1 \in (0, 1)$, yields that
\begin{align}
    \label{lim:xi/gamma=0}
    \lim_{k \to \infty} \  \frac{\left\|\xi^{k-1}\right\|}{\gamma_{\sss k-1}} = 0, \quad \lim_{k \to \infty} \  \frac{\left\|\zeta^{k-1}\right\|}{\gamma_{\sss k-1}} = 0.
\end{align}
By the inequality \cref{leq:ubound_feas} and nondecreasing sequence $\{\gamma_{\sss k}\}$, we conclude that
\begin{align}
    \lim_{k \to \infty} \ \|\mathbf{u}^{k}-\Psi(\mathbf{h}^{k})\mathbf{w}^{k}\| = 0, \quad
    \lim_{k \to \infty} \ \|\mathbf{h}^{k}-(\mathbf{u}^{k})_{+}\| = 0,
\end{align}
using the same manner for showing \cref{eq:lim con in K}.

\ref{theo_item:ALM_ELP}
When $\mathcal{K}$ is finite, there exists a constant $K$ such that $\gamma_{\sss k-1} = \gamma_{\sss K}$ for those $k > K$. Then, we turn to consider the boundedness of $\{\xi^{k-1}\}$ and $\{\zeta^{k-1}\}$.
Summing up \cref{lar_mul_update} for those $k > K$,  and using  \eqref{cond:update_gamma}, we find
\begin{eqnarray*}
  & & \max\{\{\|{\xi^{k-1}}\|, \|\zeta^{k-1}\|\} \\
   & & \leq \max\{\|\xi^{K}\|,\|\zeta^{K}\|\}
   + \frac{\eta_1 {\gamma_{\sss K}}}
   {1-\eta_1}
   \max\left\{\left\|\mathbf{u}^{K} - \Psi(\mathbf{h}^{K})\mathbf{w}^{K}\right\|, \left\|\mathbf{h}^{K} - (\mathbf{u}^{K})_+\right\|\right\}.
\end{eqnarray*}
From the above, the boundedness of $\{\xi^{k-1}\}$ and $\{\zeta^{k-1}\}$ are thus proved. Together with $\gamma_{\sss k-1} = \gamma_{\sss K}$ for those $k > K$, we can further deduce that $\|\xi^{k-1}\|^2/\gamma_{\sss k-1}$ and $\|\zeta^{k-1}\|^2/\gamma_{\sss k-1}$ are bounded for those $k \in \mathbb{N}_{+}$. 

When the set $\mathcal{K}$ is infinite, by \cref{lim_K:xixi2/gamma=0} we know that $\|\xi^{k-1}\|^2/\gamma_{\sss k-1}$ and $\|\zeta^{k-1}\|^2/\gamma_{\sss k-1}$ are bounded for $k-1 \in {\cal K}$. {Therefore, no matter $\mathcal{K}$ is finite or infinite, $\|\xi^{k-1}\|^2/\gamma_{\sss k-1}$ and $\|\zeta^{k-1}\|^2/\gamma_{\sss k-1}$ are bounded for $k-1 \in {\cal K}$.}

Moreover, we can deduce the following inequality according to the expression of ${\cal{L}}_{k-1}$,
condition \cref{cond:func_value bounded}, and ${\bf s}^k = {\bf s}^{k-1,j}$:
\begin{equation}
\begin{split}
\label{ieq:bounded R(s)}
    &\mathcal{R} (\mathbf{s}^{k}) + \frac{\gamma_{\sss k-1}}{2}\left\|\mathbf{u}^{k}-\Psi(\mathbf{h}^{k})\mathbf{w}^{k}+\frac{\xi^{k-1}}{\gamma_{\sss k-1}}\right\|^2  + \frac{\gamma_{\sss k-1}}{2}\left\|\mathbf{h}^{k}-(\mathbf{u}^{k})_{+}+\frac{\zeta^{k-1}}{\gamma_{\sss k-1}}\right\|^2\\
   \leq &\ \Gamma + \frac{\|\xi^{k-1}\|^2}{2\gamma_{\sss k-1}} + \frac{\|\zeta^{k-1}\|^2}{2\gamma_{\sss k-1}}.
\end{split}
\end{equation}
The above inequality, along with the boundedness of {$\left\{{\|\xi^{k-1}\|^2}/{\gamma_{k-1}}\right\}_{k-1 \in {\cal K}}$} {and} {$\left\{{\|\zeta^{k-1}\|^2}/{\gamma_{k-1}}\right\}_{k-1 \in {\cal K}}$}, yields the boundedness of $\{\mathbf{s}^{k}\}_{k-1\in {\cal K}}$ by the same manner in \cref{gradient} \ref{lem_ele:levelset_pro}.
Hence there exists at least one accumulation point of $\{{\bf s}^k\}$.

Any accumulation point is a feasible point of \cref{prob:ror}, which can be derived immediately by (i), because of the continuity of the functions in the constraints of \cref{prob:ror}.
\end{proof}

Below we show the main convergence result of the ALM.
\begin{theorem}
\label{ALM-convergence}
    Every accumulation point of $\{\mathbf{s}^{k}\}$ generated by \cref{Alg:ALM} is a KKT point of problem \cref{prob:ror}.
\end{theorem}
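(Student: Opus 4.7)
Let $\bar{\mathbf{s}}$ be any accumulation point of $\{\mathbf{s}^k\}$ and let $\{\mathbf{s}^{k_j}\}$ be a subsequence converging to $\bar{\mathbf{s}}$. Existence of an accumulation point and its feasibility $\bar{\mathbf{s}}\in\mathcal F$ have already been established in Theorem \ref{feasiblilty}. Since problem \eqref{prob:ror} has the same constraints as problem \eqref{prob:cor}, the NNAMCQ of Lemma \ref{lemma: NNAMCQ} applies at $\bar{\mathbf{s}}$. The goal is to produce multipliers $(\bar\xi,\bar\zeta)$ with
\begin{equation*}
0\ \in\ \nabla\mathcal{R}(\bar{\mathbf{s}})+J\mathcal{C}_1(\bar{\mathbf{s}})^{\sss\top}\bar\xi+\partial\bigl(\bar\zeta^{\sss\top}\mathcal{C}_2(\bar{\mathbf{s}})\bigr).
\end{equation*}

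The first step is to rewrite the inexact stationarity condition \eqref{cond:subdiff} in KKT form. By \eqref{cond:subdiff}, there exists $v^k\in\partial\mathcal{L}(\mathbf{s}^k,\xi^{k-1},\zeta^{k-1},\gamma_{k-1})$ with $\|v^k\|\le\epsilon_{k-1}$, and $\epsilon_{k-1}\downarrow 0$ by construction. Splitting $\mathcal L$ into its smooth part and the nonsmooth contribution coming from $(\mathbf u)_+$, and combining the linear terms $\xi^{k-1}+\gamma_{k-1}(\mathbf u^k-\Psi(\mathbf h^k)\mathbf w^k)$ and $\zeta^{k-1}+\gamma_{k-1}(\mathbf h^k-(\mathbf u^k)_+)$ via the multiplier updates \eqref{lar_mul_update} to obtain $\xi^k$ and $\zeta^k$, I would express $v^k$ as
\begin{equation*}
v^k\ \in\ \nabla\mathcal{R}(\mathbf{s}^k)+J\mathcal{C}_1(\mathbf{s}^k)^{\sss\top}\xi^k+\partial\bigl((\zeta^k)^{\sss\top}\mathcal{C}_2(\mathbf{s}^k)\bigr),
\end{equation*}
using the subdifferential sum rule together with the fact that $\mathcal R$ and $\mathcal C_1$ are smooth and $\zeta\mapsto\zeta^{\sss\top}\mathcal C_2(\mathbf{s})$ is linear in $\zeta$.

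The second step splits into two cases depending on the behaviour of the multipliers along $\{k_j\}$. In the bounded case, where $\{(\xi^{k_j},\zeta^{k_j})\}$ admits a convergent subsequence with limit $(\bar\xi,\bar\zeta)$, I would pass to the limit in the displayed inclusion: continuity of $\nabla\mathcal R$ and $J\mathcal C_1$, outer semicontinuity of the limiting subdifferential, and $v^{k_j}\to 0$ together yield the desired KKT inclusion at $\bar{\mathbf{s}}$. In the unbounded case, set $M_{k_j}:=\max\{\|\xi^{k_j}\|,\|\zeta^{k_j}\|,1\}\to\infty$, divide the inclusion by $M_{k_j}$, and extract a further subsequence with $(\xi^{k_j}/M_{k_j},\zeta^{k_j}/M_{k_j})\to(\tilde\xi,\tilde\zeta)$; by construction $\|(\tilde\xi,\tilde\zeta)\|_\infty=1$, so the limit is nonzero. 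Since $\nabla\mathcal R(\mathbf{s}^{k_j})$ and $v^{k_j}$ are bounded while $M_{k_j}\to\infty$, passing to the limit produces $0\in J\mathcal{C}_1(\bar{\mathbf{s}})^{\sss\top}\tilde\xi+\partial(\tilde\zeta^{\sss\top}\mathcal{C}_2(\bar{\mathbf{s}}))$ with $(\tilde\xi,\tilde\zeta)\neq 0$, contradicting the NNAMCQ from Lemma \ref{lemma: NNAMCQ}. Hence only the bounded case occurs, completing the proof.

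The step I expect to be most delicate is the first: the precise accounting of the subdifferential of the nonsmooth piece $\zeta^{\sss\top}(\mathbf h-(\mathbf u)_+)+\tfrac{\gamma}{2}\|\mathbf h-(\mathbf u)_+\|^2$ with respect to $\mathbf u$, and verifying that after substituting the multiplier updates the quadratic penalty terms collapse cleanly so that only $\xi^k,\zeta^k$ (and no residual $\gamma_{k-1}$ factors) survive. The unbounded-case argument additionally requires that the outer semicontinuity of the limiting subdifferential can be applied as $(\mathbf{s}^{k_j},\zeta^{k_j}/M_{k_j})\to(\bar{\mathbf{s}},\tilde\zeta)$, which is a standard property of $\partial$ when the second argument enters linearly, but needs to be stated carefully.
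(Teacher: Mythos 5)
Your proposal is correct and follows essentially the same route as the paper's proof: rewriting the inexact stationarity condition \eqref{cond:subdiff} as an approximate KKT inclusion in terms of the updated multipliers $(\xi^{k},\zeta^{k})$, then ruling out unbounded multipliers by normalizing and invoking the NNAMCQ of Lemma \ref{lemma: NNAMCQ}, and finally passing to the limit via outer semicontinuity of the limiting subdifferential. The delicate step you flag is exactly where the paper spends its effort — it verifies the identity \eqref{sub_Q_s} componentwise in $\mathbf{u}$, treating the case $\mathbf{u}_j^{k_i}=0$ separately in \eqref{subdiff_q_u_left} to show the quadratic penalty and multiplier terms collapse exactly to $\partial\bigl((\zeta^{k_i})^{\sss\top}\mathcal{C}_2(\mathbf{s}^{k_i})\bigr)$ with no residual $\gamma_{k_i-1}$ factors.
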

\begin{proof}
Let $\{\mathbf{s}^{k_{i}}\}$ be a subsequence of $\{\mathbf{s}^k\}$ converging to $\bar{\mathbf{s}}$.
Then $\bar{s} \in \mathcal{F}$
by  
\cref{feasiblilty}.
We claim that
\begin{equation}
\begin{split}
\label{eq:AL_update}
    &\partial
\mathcal{L}\big(\mathbf{s}^{k_i}, \xi^{k_i-1}, \zeta^{k_i-1},\gamma_{\sss k_i-1}\big) \\
    =\ & \nabla\mathcal{R}(\mathbf{s}^{k_i}) + \nabla_\mathbf{s}\Big(\langle  \xi^{k_i-1},  {\mathbf{u}^{k_i}}-{\Psi(\mathbf{h}^{k_i})}\mathbf{w}^{k_i}\rangle + \tfrac{\gamma_{\sss k_i-1}}{2}\big\|{\mathbf{u}^{k_i}}-{\Psi(\mathbf{h}^{k_i})}\mathbf{w}^{k_i}\big\|^2\Big) \\
    \ & \quad + \partial_\mathbf{s} \Big(\langle  \zeta^{k_i-1},  {\mathbf{h}^{k_i}}-(\mathbf{u}^{k_i})_{+}\rangle + \tfrac{\gamma_{\sss k_i-1}}{2}\big\|{\mathbf{h}^{k_i}}-(\mathbf{u}^{k_i})_{+}\big\|^2\Big)\\
    =\ & \nabla\mathcal{R}(\mathbf{s}^{k_i}) + J{\mathcal{C}_1}({\mathbf{s}^{k_i}})^{\top} \xi^{k_i} + \partial\Big((\zeta^{k_i})^{\top} \mathcal{C}_2(\mathbf{s}^{k_i})\Big),
\end{split}
\end{equation}
where $\mathcal{C}_{1}$ and $\mathcal{C}_{2}$ are defined in \cref{func:cons}.

First, by employing \cref{lar_mul_update} and by direct computation, we have
\begin{equation}
\begin{split}
\label{KKT_grad_smooth}
    &\nabla_\mathbf{s} \big(\langle  \xi^{k_i-1},  {\mathbf{u}^{k_i}}-{\Psi(\mathbf{h}^{k_i})}\mathbf{w}^{k_i}\rangle + \tfrac{\gamma_{\sss k_i-1}}{2}\big\|{\mathbf{u}^{k_i}}-{\Psi(\mathbf{h}^{k_i})}\mathbf{w}^{k_i}\big\|^2\big)\\
    =\ &J{\mathcal{C}_1}({\mathbf{s}^{k_i}})^{\top} \big(\xi^{k_i-1}+\gamma_{\sss k_i-1}({\mathbf{u}^{k_i}}-{\Psi(\mathbf{h}^{k_i})}\mathbf{w}^{k_i})\big)
    =J{\mathcal{C}_1}({\mathbf{s}^{k_i}})^{\top} \xi^{k_i}.
\end{split}
\end{equation}
Then, it remains to verify that
\begin{align}
\label{sub_Q_s}
    \partial_\mathbf{s} (\langle  \zeta^{k_i-1},  {\mathbf{h}^{k_i}}-(\mathbf{u}^{k_i})_{+}\rangle + \tfrac{\gamma_{\sss k_i-1}}{2}\|{\mathbf{h}^{k_i}}-(\mathbf{u}^{k_i})_{+}\|^2) = \partial\Big((\zeta^{k_i})^{\top} \mathcal{C}_2(\mathbf{s}^{k_i})\Big).
\end{align}
To verify \cref{sub_Q_s}, it can be divided into the subdifferential associated with $\mathbf{h}$ and $\mathbf{u}$. We first prove that \cref{sub_Q_s} is satisfied associated with $\mathbf{h}$. By simple computation,
\begin{equation}
\begin{split}
\label{subdiff_q_h}
    &\nabla_{\mathbf{h}}\big(\langle  \zeta^{k_i-1},  {\mathbf{h}^{k_i}}-(\mathbf{u}^{k_i})_{+}\rangle +\tfrac{\gamma_{\sss k_i-1}}{2}\big\|{\mathbf{h}^{k_i}}-(\mathbf{u}^{k_i})_{+}\big\|^2\big)\\
    =\ &J_{\mathbf{h}}\mathcal{C}_2(\mathbf{z}^{k_i}, \mathbf{h}^{k_i}, \mathbf{u}^{k_i})^{\top} \big(\zeta^{k_i-1} + \gamma_{\sss k_i-1}({\mathbf{h}^{k_i}}-(\mathbf{u}^{k_i})_{+})\big)\\
    =\ &J_{\mathbf{h}}\mathcal{C}_2(\mathbf{z}^{k_i}, \mathbf{h}^{k_i}, \mathbf{u}^{k_i})^{\top}\zeta^{k_i} = \nabla_{\mathbf{h}} \big(\langle\zeta^{k_i}, {\mathbf{h}^{k_i}}-(\mathbf{u}^{k_i})_{+}\rangle\big).
\end{split}
\end{equation}

Then we prove that \cref{sub_Q_s} is satisfied associated with $\mathbf{u}$, which can be replaced by proving $rT$ one dimensional equations with the similar structure as follows:
\begin{align}
\label{subdiff_q_u_new}
    &\partial_{\mathbf{u}_{j}}\Big(\zeta^{k_i-1}_{j}(\mathbf{h}_{j}^{k_i} - (\mathbf{u}_{j}^{k_i})_{+}) + \tfrac{\gamma_{\sss k_i-1}}{2}({\mathbf{h}_{j}^{k_i}}-(\mathbf{u}_{j}^{k_i})_{+})^2\Big) = \partial_{\mathbf{u}_{j}} \Big(\zeta^{k_i}_{j}(\mathbf{h}_{j}^{k_i} - (\mathbf{u}_{j}^{k_i})_{+})\Big),
\end{align}
where $j = 1, 2, ..., rT$.
When $\mathbf{u}_{j}^{k_i} \neq 0$, equation \cref{subdiff_q_u_new} can be easily deduced by the same proof method as in \cref{subdiff_q_h}. When $\mathbf{u}_{j}^{k_i} = 0$, the validity of (\ref{subdiff_q_u_new}) can be proved as follows:
\begin{equation}
\begin{split}
\label{subdiff_q_u_left}
    &\partial_{\mathbf{u}_{j}}\Big(\zeta^{k_i-1}_{j}(\mathbf{h}_{j}^{k_i} - (\mathbf{u}_{j}^{k_i})_{+}) + \tfrac{\gamma_{\sss k_i-1}}{2}({\mathbf{h}_{j}^{k_i}}-(\mathbf{u}_{j}^{k_i})_{+})^2\Big)\\
    =\ &\begin{cases}
    \{0, -\zeta^{k_i-1}_{j} - \gamma_{\sss k_i-1}(\mathbf{h}_j^{k_i} - \mathbf{u}_j^{k_i})\}, &\text{if} \ \gamma_{\sss k_i-1}\mathbf{h}_j^{k_i}+\zeta^{k_i-1}_{j} \geq 0, \\
    \big[0, -\zeta^{k_i-1}_{j} - \gamma_{\sss k_i-1}(\mathbf{h}_j^{k_i} - \mathbf{u}_j^{k_i})\big], &\text{if} \ \gamma_{\sss k_i-1}\mathbf{h}_j^{k_i}+\zeta^{k_i-1}_{j} < 0,
    \end{cases}\\
    =\ &\begin{cases}
    \{0, -\zeta^{k_i}_{j}\}, &\text{if} \ \zeta^{k_i}_{j} \geq 0, \\
    \big[0, -\zeta^{k_i}_{j}\big], &\text{if} \ \zeta^{k_i}_{j} < 0,
  \end{cases}\\
  =\ &\partial_{\mathbf{u}_{j}} \Big(\zeta^{k_i}_{j}(\mathbf{h}_{j}^{k_i} - (\mathbf{u}_{j}^{k_i})_{+})\Big).
\end{split}
\end{equation}
Combining \cref{KKT_grad_smooth} and \cref{sub_Q_s} yields the validity of \cref{eq:AL_update}.

Up to now, we have verified that equation \cref{eq:AL_update} holds. Thus, there exists a sequence $\{\varsigma^{k_i}\}$ satisfying $\|\varsigma^{k_i}\| \leq \epsilon^{k_i}$ such that
\begin{align}
\label{L_function_ki}
    \varsigma^{k_i} \in \nabla\mathcal{R}(\mathbf{s}^{k_i}) + J{\mathcal{C}_1}({\mathbf{s}^{k_i}})^{\top} \xi^{k_i} + \partial\Big((\zeta^{k_i})^{\top} \mathcal{C}_2(\mathbf{s}^{k_i})\Big).
\end{align}
However, the boundedness of $\{\xi^{k_i}\}$ and $\{\zeta^{k_i}\}$ in \cref{L_function_ki} are still not sure. Define $\varrho^{i}$ $=\max\{\|\xi^{k_i}\|_{\infty}, \|\zeta^{k_i}\|_{\infty}\}$ and assume that $\{\varrho^{i}\}$ is unbounded. It is trivial to have bounded sequences $\{\xi^{k_i}/\varrho^{i}\}$ and $\{\zeta^{k_i}/\varrho^{i}\}$ according to the definition of $\varrho^{i}$. Without loss of generality, we assume $\{\xi^{k_i}/\varrho^{i}\}$ $\to$ $\bar{\xi}$ and $\{\zeta^{k_i}/\varrho^{i}\} \to \bar{\zeta}$ as $k \to \infty$ and thus have
\begin{align}
\label{max_norm_1}
    \max\{\|\bar{\xi}\|_{\infty}, \|\bar{\zeta}\|_{\infty}\} = 1.
\end{align}
Dividing by $\varrho^{i}$ on both sides of \cref{L_function_ki} and taking $i \to \infty$, and using the facts that the limiting subdifferential is outer semicontinuous \cite[Proposition 8.7]{rockafellar2009variational}, and $\varsigma^{k_i} \to 0$ as $i \to \infty$,  we derive that
\begin{align}
\label{zero in sub L}
    0 \in J{\mathcal{C}_1}(\bar{\mathbf{s}})^{\top} \bar \xi + \partial\Big({\bar{\zeta}}^{\top} \mathcal{C}_2(\bar{\mathbf{s}})\Big).
\end{align}
 Combining \cref{zero in sub L} and \cref{lemma: NNAMCQ} yields that $\bar{\xi} = 0$ and $\bar{\zeta} = 0$, which contradicts \cref{max_norm_1}.
Therefore, $\{\xi^{k_i}\}$ and $\{\zeta^{k_i}\}$ are bounded. Without loss of generality, we assume $\{\xi^{k_i}\} \to \bar{\xi}$ and $\{\zeta^{k_i}\} \to \bar{\zeta}$ as $i \to \infty$. Letting $i \to \infty$ in \cref{L_function_ki}, we obtain
\begin{displaymath}
    0 \in \ \nabla\mathcal{R}(\bar{\mathbf{s}}) + J{\mathcal{C}_1}(\bar{\mathbf{s}})^{\top} \bar{\xi} + \partial\Big({\bar{\zeta}}^{\top} \mathcal{C}_2(\bar{\mathbf{s}})\Big).
\end{displaymath}
Therefore, $\bar{\mathbf{s}}$ is a KKT point of problem \cref{prob:ror}.
\end{proof}

\subsection{{Extensions to other activation functions}}
\label{subsec:exten_act_model}
{Now we discuss the possible extensions of our methods, algorithms and theoretical analysis, using other activation functions rather than the ReLU.}

{First, we claim that the activation functions are required to be {locally Lipschitz continuous}, because the {locally Lipschitz} continuity of the ReLU function is used in $L_2(\xi, \zeta, \gamma, {\hat r})$ of Lemma \ref{lem3.2} that depends on the Lipschitz constant of the ReLU function on a compact set.}
{Then we find that in the analysis above only the following two places make use of the special piecewise linear structure of the ReLU  function: 
\begin{itemize}
\item[P1.] Explicit formula for ${\bf{u}}^{k-1,j}$ in \eqref{subpro_u_subproblem} of the BCD method in Algorithm \ref{Alg:BCD}.
\item[P2.] Equations (\ref{subdiff_q_u_left}) for proving (\ref{subdiff_q_u_new})
in the proof of Theorem \ref{ALM-convergence}.
\end{itemize}}
{For P1, even if the activation function in \eqref{ReLU} 
is replaced by others, the objective function  in problem \eqref{subpro_u_subproblem} can still be separated into $rT$ one-dimensional functions, which is obtained by substituting the ReLU function $(u)_+$ in \eqref{subpro_u_ele} by a more general activation function.}
{For P2, if an arbitrary smooth activation function is considered, then (4.29) holds obviously because the limiting subdifferential reduces to the gradient. Below we illustrate in detail the leaky ReLU and the ELU activation functions as examples for extensions. 
It is clear that the expression of $L_{2}(\xi, \zeta, \gamma, \hat{r})$ in \cref{lem3.2} remains unchanged for the two activation functions because they all have Lipschitz constant 1, the same as that of the ReLU.  }

\subsubsection*{Extension to the leaky ReLU}
{

Let us replace the ReLU activation function $\sigma(u)=(u)_+$
with the leaky ReLU activation function defined by 
\begin{align*}
    \sigma_{\rm{lRe}}(u): = \max\{u, \varpi u\}, 
\end{align*}
where $\varpi\in (0,1)$ is a fixed parameter. The leaky ReLU activation function has been widely used in recent years. 
With regard to P1, by direct computation, a closed-form global solution of 
\begin{equation}
\label{gen_subpro_u_ele}
    \mathop{\min}_{u\in \mathbb{R}} \ \varphi_{\rm{lRe}}(u) := \tfrac{\gamma}{2}(u - \theta_{1})^2 + \tfrac{\gamma}{2}(\theta_{2}-\sigma_{\rm{lRe}}(u))^2 + \tfrac{\mu}{2} (u - \theta_{3})^2 + \lambda_6 u^2,
\end{equation}
can be obtained similarly using the procedures  for ReLU in \eqref{sub_u_ele_u_all}-\eqref{subpro_u_ele_u_minus}, except that the expression $u^{-}$ of \eqref{subpro_u_ele_u_minus} changes to 
\begin{eqnarray}
\label{subpro_relu_ele_u_min}
    u^{-} = \left\{
    \begin{array}{ll}
\dfrac{\gamma\theta_{1} + \gamma \varpi \theta_2+ \mu \theta_{3}}{\gamma + \gamma \varpi^2 + 2\lambda_6 + \mu}, & \text{if} \ \gamma\theta_{1} + \mu \theta_{3}< 0,  \\
    0, & \text{otherwise}.
    \end{array}
\right.
\end{eqnarray}
For P2, \eqref{subdiff_q_u_left} is modified as follows: when $\mathbf{u}_{j}^{k_i}=0$,
\begin{equation}
\begin{split}
\label{prelu_subdiff_q_u_left}
    &\partial_{\mathbf{u}_{j}}\Big(\zeta^{k_i-1}_{j}(\mathbf{h}_{j}^{k_i} -\sigma_{\rm{lRe}}(\mathbf{u}_{j}^{k_i})) + \tfrac{\gamma_{\sss k_i-1}}{2}({\mathbf{h}_{j}^{k_i}}-\sigma_{\rm{lRe}}(\mathbf{u}_{j}^{k_i}))^2\Big)\\
    =\ &\begin{cases}
    \{-\varpi \zeta^{k_i}_{j}, -\zeta^{k_i-1}_{j} - \gamma_{\sss k_i-1}(\mathbf{h}_j^{k_i} - \mathbf{u}_j^{k_i})\}, &\text{if} \ \gamma_{\sss k_i-1}\mathbf{h}_j^{k_i}+\zeta^{k_i-1}_{j} \geq 0, \\
    \big[-\varpi \zeta^{k_i}_{j}, -\zeta^{k_i-1}_{j}-\gamma_{\sss k_i-1}(\mathbf{h}_j^{k_i} - \mathbf{u}_j^{k_i})\big], &\text{if} \ \gamma_{\sss k_i-1}\mathbf{h}_j^{k_i}+\zeta^{k_i-1}_{j} < 0,
    \end{cases}\\
    =\ &\begin{cases}
    \{-\varpi \zeta^{k_i}_{j}, -\zeta^{k_i}_{j}\}, &\text{if} \ \zeta^{k_i}_{j} \geq 0, \\
    \big[-\varpi \zeta^{k_i}_{j}, -\zeta^{k_i}_{j}\big], &\text{if} \ \zeta^{k_i}_{j} < 0,
  \end{cases}\\
  =\ &\partial_{\mathbf{u}_{j}} \Big(\zeta^{k_i}_{j}(\mathbf{h}_{j}^{k_i} - \sigma_{\rm{lRe}}(\mathbf{u}_{j}^{k_i}))\Big).
\end{split}
\end{equation}
}

\subsubsection*{Extension to the  ELU}
{ Let us replace the ReLU activation function with the convex and smooth activation function ELU defined by 
\begin{align*}
\sigma_{\rm{ELU}}(u) := 
\begin{cases} 
u & \text{if } u \ge 0, \\
e^{u} - 1 & \text{if } u < 0.
\end{cases}
\end{align*}
When $u\ge 0$, the ELU activation function is the same as the ReLU function. Thus for P1, the solution of \eqref{gen_subpro_u_ele} can be obtained similarly  as the ReLU case, except that we do not have the  explicit formula of $u^{-}$, which is a global solution of 
\begin{align}
\label{sub_ele_elu}
    \min_{u \in (-\infty, 0]} \ \varphi_{ \text{\tiny ELU}}(u) = \tfrac{\gamma}{2}(u - \theta_{1})^2 + \tfrac{\gamma}{2}(\theta_{2}-(e^u - 1))^2 + \tfrac{\mu}{2} (u - \theta_{3})^2 + \lambda_6 u^2,
\end{align}
 due to the presence of the exponential function in the ELU activation function. 

Now we illustrate that $u^{-}$ 
can be obtained numerically through solving several one-dimensional minimization problems.
First, using the formula of $\varphi_{\rm{ELU}}(u)$ and $\varphi_{\rm{ELU}}(u) \to +\infty$ as $u\to -\infty$, 
we can easily find a lower bound $\underline{u}<0$ such that   (\ref{sub_ele_elu}) is equivalent to 
\begin{equation}
\label{bounded_ELU}
\min_{u \in [\underline{u},0]} \varphi_{\rm{ELU}}(u).
\end{equation}
The objective function $\varphi_{ \text{\tiny ELU}}(u)$ is smooth on $(-\infty, 0]$. We thus calculate the second-order derivative of $\varphi_{ \text{\tiny ELU}}(u)$ as
\begin{align}
\label{sub_ele_elu_so}
    \varphi_{ \text{\tiny ELU}}''(u) = 2\gamma e^{2u} - \gamma(\theta_2 + 1)e^{u} + \mu + \gamma + 2\lambda_{6}.
\end{align}
Letting ${z}=e^{u}$, \eqref{sub_ele_elu_so} can thus be represented as 
\begin{align}
    \psi_{\text{\tiny ELU}}(z) := 2\gamma z^2 - \gamma(\theta_2 + 1)z + \mu + \gamma + 2\lambda_{6}, 
\end{align}
which is a quadratic function. {Hence there are at most two distinct roots of $$\psi_{\rm{ELU}}(z) = 0,$$ and consequently at most two distinct roots for $\varphi''(u)=0$ on  $[\underline{u},0]$. Hence the convexity and concavity can only be changed at most three times in $[\underline{u},0]$. That is, we can divide $[\underline{u},0]$ into at most three closed intervals, and in each interval $\varphi_{\rm{ELU}}$ is either convex or concave.} {We  minimize the objective function $\varphi_{\rm{ELU}}$ in each of those intervals that $\varphi_{\rm{ELU}}$ is 
convex, and obtain a global solution in each interval numerically. Then, we  select a point among those solutions, $0$, and $\underline{u}$   
 that has the minimal objective value.  This point is a global solution of  (\ref{sub_ele_elu}).}


}



\section{Numerical experiments}
\label{sec:ne}
We employ a real world dataset, \textbf{Volatility of S$\mathit{\&}$P index}, and synthetic datasets to evaluate the effectiveness of our reformulation (\ref{prob:ror}) and \cref{Alg:ALM} with \cref{Alg:BCD}.  To be specific, we first use RNNs with unknown weighted matrices to model these sequential datasets, and then utilize the ALM with the BCD method to train RNNs. After the training process, we can predict future values of these sequential datasets using the trained RNNs.

The numerical experiments consist of two components. The first part involves assessing whether the outputs generated by the ALM adhere to the constraints in \cref{prob:ror}. The second part is to compare the training and forecasting performance of the ALM with state-of-the-art gradient descent-based algorithms (GDs).
{All the numerical experiments were conducted using Python 3.9.8. 
For the datasets, \textbf{Synthetic dataset ($T = 10$)} and \textbf{Volatility of S$\&$P index}, experiments were carried out on a desktop (Windows 10 with 2.90 GHz Inter Core i7-10700 CPU and 32GB RAM). Additionally, experiments for \textbf{Synthetic dataset ($T = 500$)} were implemented on a server} 
{(2 Intel Xeon Gold 6248R CPUs and 768GB RAM) at the high-performance servers of the Department of Applied Mathematics, the Hong Kong Polytechnic University.
}

\subsection{Datasets}
\label{subsec:datasets}
The process of generating synthetic datasets is as follows. We randomly generate the weighted matrices $\hat{A}$, $\hat{W}$, $\hat{V}$, $\hat{b}$ and $\hat{c}$, noise $\tilde e_{t}$, $t = 1, 2, ..., T$ and the input data $X = (x_1,..., x_T)$ with some distributions. Then we calculate the output data $Y = (y_1, ..., y_T)$ by $y_t=(\hat{A} (\hat{W}(...(\hat{V}{x}_{1}+\hat{b})_{+}...)+\hat{V}{x}_{t}+\hat{b})_{+}+\hat{c})+ \tilde{e}_{t}$
for $t \in [T]$. In the numerical experiments, we generate two synthetic datasets with $T = 10$ and $T = 500$. The detailed information of the two synthetic datasets is listed in \cref{tab:syn_inf}. Moreover, the ratio of splitting for the training and test sets is about $9:1$.
\begin{table}[tbhp]
\footnotesize
\caption{Synthetic datasets}
\renewcommand{\arraystretch}{1.3}
\begin{center}
\label{tab:syn_inf}
\begin{tabular}{|c|c|c|c|ccc|}
\hline
\multirow{2}{*}{$T$} & \multirow{2}{*}{$n$} & \multirow{2}{*}{$m$} & \multirow{2}{*}{$r$} & \multicolumn{3}{c|}{Distributions}\\
\cline{5-7}
~ & ~  & ~ & ~ & \multicolumn{1}{c|}{weight matrices}& \multicolumn{1}{c|}{the noise}& the input data\\
\hline
10 & 5 & 3 & 4 & \multicolumn{1}{c|}{$\mathcal{N}(0, 0.8)$} & \multicolumn{1}{c|}{$\mathcal{N}(0, 10^{-3})$} & $\mathcal{U}(-1, 1)$\\
\hline
500 & 80 & 30 & 100 & \multicolumn{1}{c|}{$\mathcal{N}(0, 0.05)$} & \multicolumn{1}{c|}{$\mathcal{N}(0, 10^{-5})$} & $\mathcal{U}(-1, 1)$\\
\hline
\end{tabular}
\end{center}
\end{table}

The dataset, \textbf{Volatility of S$\&$P index}, consists of the monthly realized volatility of the S$\&$P index and 11 corresponding exogenous variables from February 1973 to June 2009, totaling 437 time steps, i.e., $T = 437$, $n = 11$ and $m = 1$. The dataset was collected in strict adherence to the guidelines in \cite{bucci2020realized} and contains no missing values.
In the dataset, the monthly realized volatility of S$\&$P index is appointed as the output variable, while 11 exogenous variables are input variables. For training the RNNs, we first standardize the dataset as zero mean and unit variance, and then allocate 90$\%$ of the dataset, consisting of 393 time steps, as the training set, while the remaining 44 time steps are the test set. Moreover, we have $r = 20$ for the real dataset.

\subsection{Evaluations}
We define
$\textbf{FeasVio} := \text{max}\{\|\mathbf{u} - \Psi(\mathbf{h})\mathbf{w}\|, \|\mathbf{h} - (\mathbf{u})_{+}\|\}$ to evaluate the feasibility violation for constraints $\mathbf{u}=\Psi(\mathbf{h})\mathbf{w}$ and $\mathbf{h}=(\mathbf{u})_+$.
Moreover, the training and test errors are used to evaluate the forecasting accuracy of RNNs in training and test sets denoted as \begin{eqnarray*}
\textbf{TrainErr} := \frac{1}{T_{1}} \sum_{t=1}^{T_{1}}\|y_{t}-({A} ({W}(...({V}{x}_{1}+{b})_{+}...)+{V}{x}_{t}+{b})_{+} + {c}\|^2\\
\textbf{TestErr} := \frac{1}{T_{2}} \sum_{t=T_{1}+1}^{T_{1}+T_{2}}\|y_{t}-({A} ({W}(...({V}{x}_{1}+{b})_{+}...)+{V}{x}_{t}+{b})_{+} + {c})\|^2,
\end{eqnarray*}
where $T_1$ and $T_2$ are the time length of the training set and test set, and ${A}$, ${W}$, ${V}$, ${b}$ and ${c}$ are the output solutions from ALM.

\subsection{Investigating the feasibility}
\label{subsec:infea}
In this subsection, we aim to verify the outputs from the  ALM satisfying the constraints of (\ref{prob:cor}) through numerical experiments, while we have already proved the feasibility of any accumulation point of a sequence generated by the ALM in section 4. Initial values of weight matrices ${A}^{0},$ ${W}^{0},$ ${V}^{0}$ are randomly generated from the standard Gaussian distribution $\mathcal{N}\left(0, 0.1 \right)$. Moreover, the bias ${b}^{0}$ and ${c}^{0}$ are set as $0$. For all three datasets, we stop the outer loop (ALM) when it reaches 100 iterations, and the inner loop  (BCD method) terminates at 500 iterations. Other parameters are listed in \cref{tab:paras}.

\begin{table}[tbhp]
\footnotesize
\caption{Parameters of the ALM: the parameters for the given datasets are set as $\gamma^{0}=1$, $\xi^{0}=\boldsymbol{0}$, $\zeta^{0}=\boldsymbol{0}$, $\epsilon_{0} = 0.1$, $\Gamma = 10^2$, $\mu = 10^{-5}$, $\lambda_1 = \tau/rm$, $\lambda_2 = \tau/r^2$, $\lambda_3 = \tau/rn$, $\lambda_4 = \tau/r$, $\lambda_5 = \tau/m$, $\lambda_6 = 10^{-8}$.}
\label{tab:paras}
\begin{center}
\renewcommand\arraystretch{1.5}
\begin{tabular}{|c|c|c|}
\hline
Datasets & Regularization parameters & Algorithm parameters \\ \hline
\makecell{\textbf{Synthetic dataset ($T = 10$)}} & {$\tau = 1.2$} & \multirow{2}{*}{\makecell{$\eta_1=0.99$, $\eta_2=5/6$,\\ $\eta_3=0.01$, $\eta_4=5/6$.} } \\ \cline{1-2}
\makecell{\textbf{Volatility of S$\mathit{\&}$P index}} & $\tau = 1$ & \\ \hline
\makecell{\textbf{Synthetic dataset ($T = 500$)}} &  $\tau = 500$ &  \makecell{$\eta_1=0.90$, $\eta_2=0.90$,\\ $\eta_3=0.015$, $\eta_4=0.8$.} \\ \hline
\end{tabular}
\end{center}
\end{table}

From \cref{fig:fv}, we observe that the feasibility violation in each dataset is very small at the beginning, which implies that the selected initial point is feasible. As it turns to the first iteration, the feasibility violation goes to a large value. After that, the value goes to exhibit an oscillatory decrease and tends to zero.
This indicates that the points generated by the ALM gradually satisfy the constraint conditions as the number of iterations increases.
\begin{figure}[tbhp]
    \centering
    \subfloat[Synthetic dataset ($T=10$)]{\includegraphics[width=4.3cm]{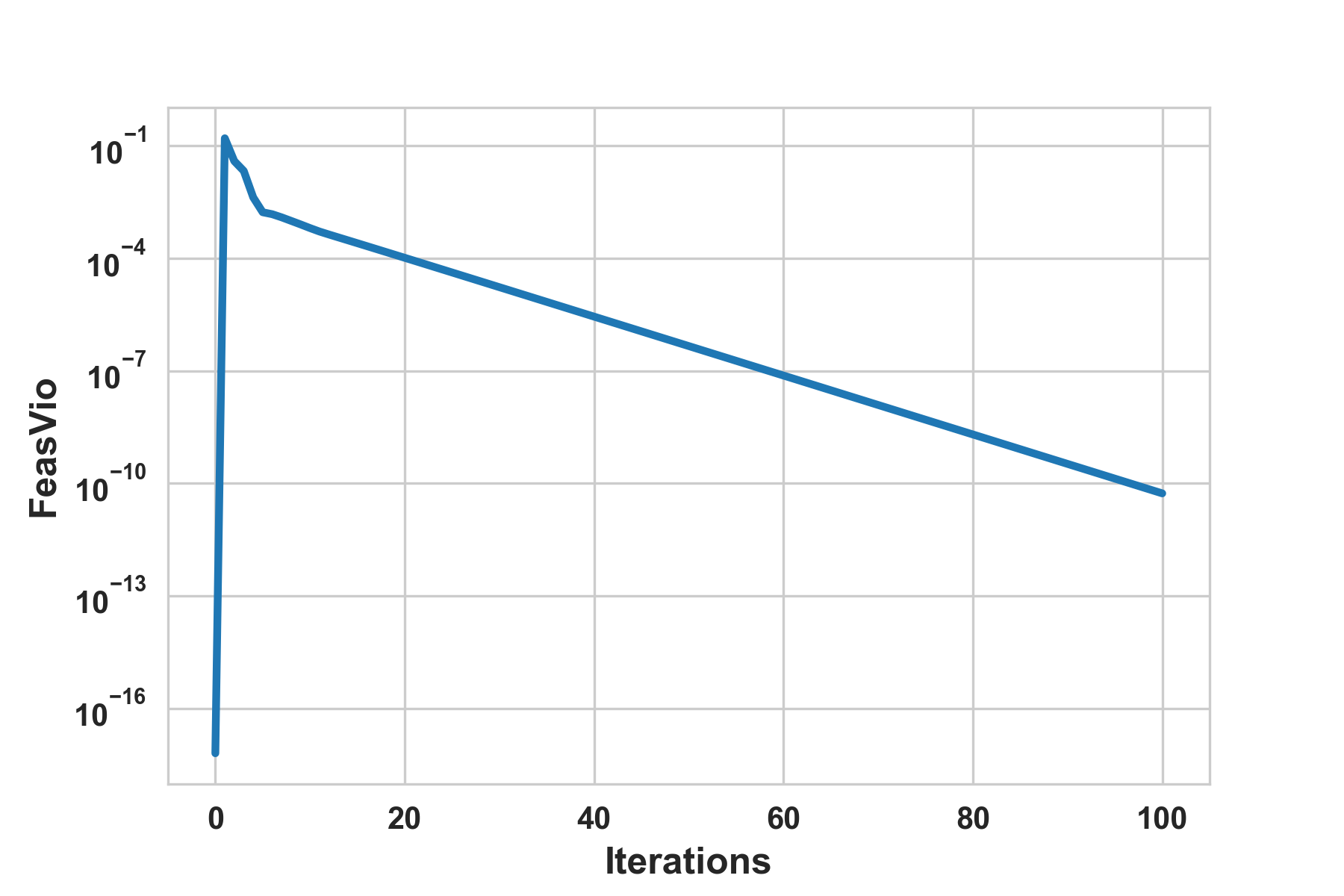}}
    \subfloat[Synthetic dataset ($T=500$)]{\includegraphics[width=4.3cm]{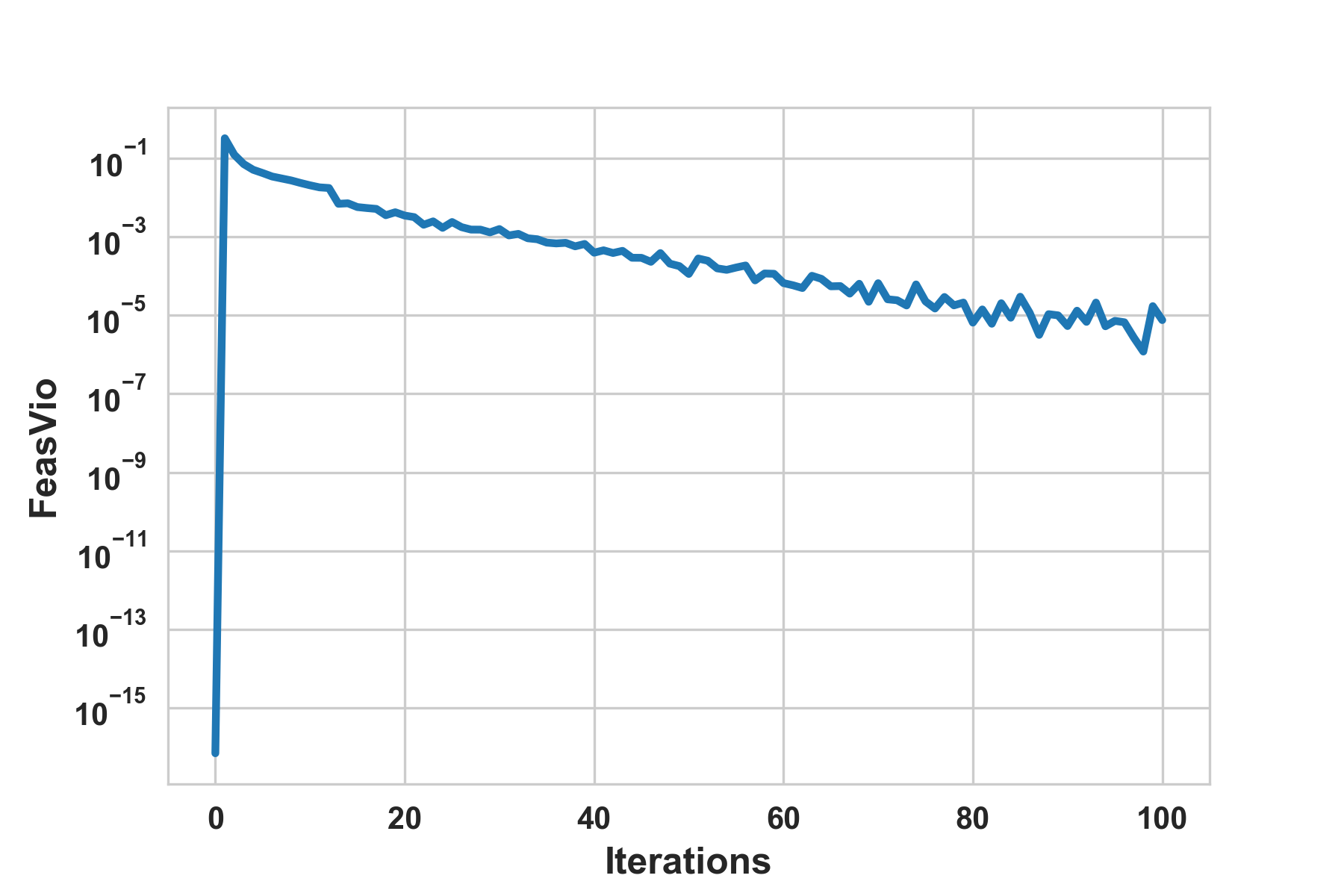}}
    \subfloat[Volatility of S$\mathit{\&}$P index]{\includegraphics[width=4.3cm]{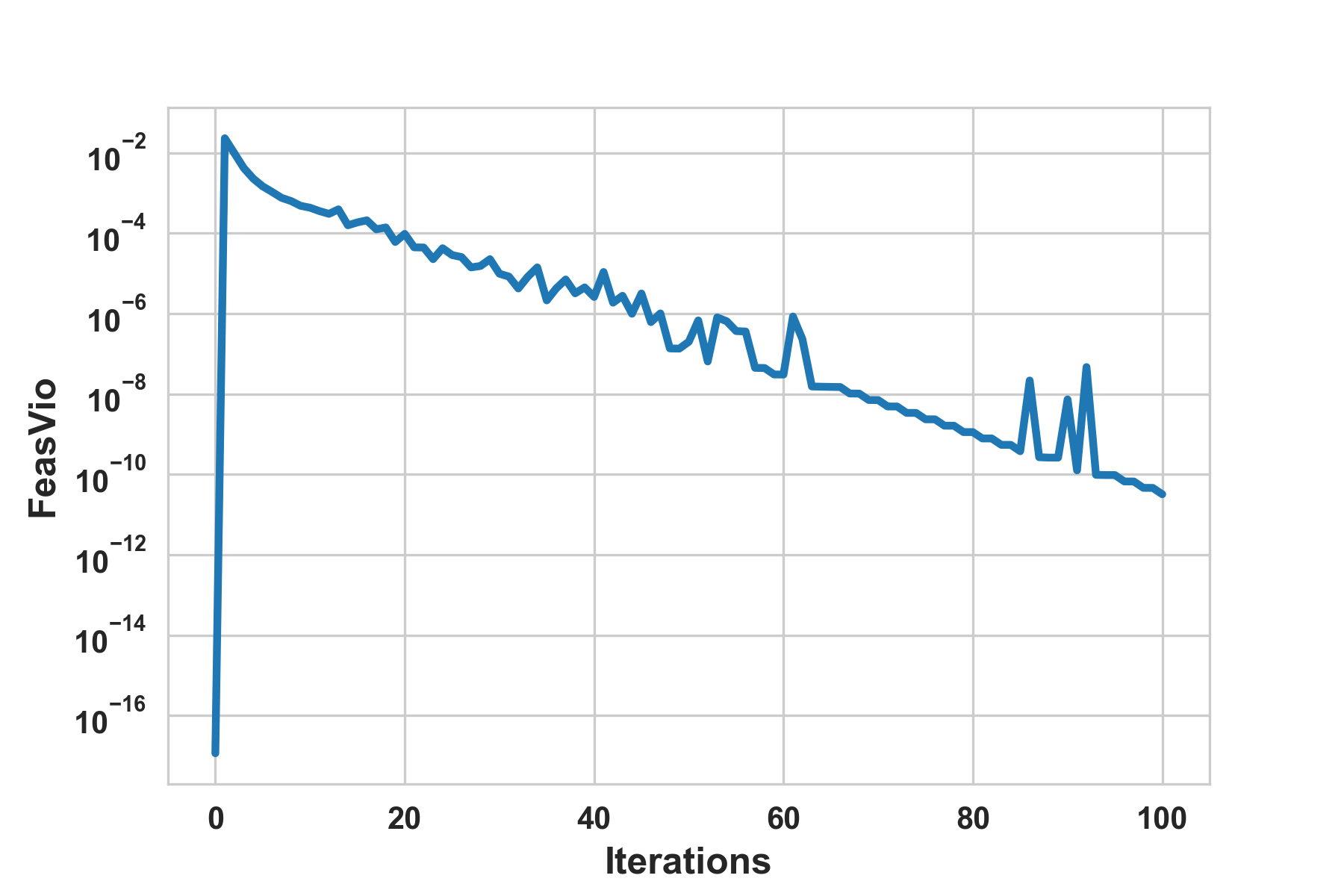}}
    \caption{The feasibility violation of the ALM in different datasets}
    \label{fig:fv}
\end{figure}

\subsection{Comparisons with state-of-the-art GDs}
\label{subsec:comp_GDs}
In this subsection, we compare the training and forecasting accuracy of RNNs using different methods. Specifically, we compare our ALM with the state-of-the-art GDs and SGDs with special techniques, i.e., gradient descent (GD), gradient descent with gradient clipping (GDC), gradient descent with Nesterov momentum {(GDNM)}, Mini-batch SGD and Adam.  

{For the initial values of ${A}^0$, ${W}^0$, ${V}^0$, we use the   following initialization strategies: random normal initialization \cite{bengio2009learning} with zero mean and standard deviations of \(10^{-3}\) and \(10^{-1}\), He initialization \cite{he2015delving}, Glorot initialization \cite{glorot2010understanding}, and LeCun initialization \cite{klambauer2017self}.  Notably, the initial values of bias, $b^0$ and $c^0$, were both set to $0$ according to \cite[pp. 305]{goodfellow2016deep}.}
{

We search the  learning rates for GDs and SGDs over $\{10^{-4}$, $10^{-3}$, $10^{-2}$, $10^{-1}$, $1\}$, as well as the clipping norm of GDC  
over $\{0.5, 1, 1.5, 2, 3, 4, 5, 6\}$.} 
%
{
We employ the leave-P-out cross validation and repeated each method 30 trials with $P=1$ in \textbf{Synthetic dataset ($T=10$)}, and $P=10$ in \textbf{Volatility of  S$\mathit{\&}$P index} and \textbf{Synthetic dataset ($T=500$)}. We then select the learning rates and clipping norm with the best test error averaged over 30 trials, which are recorded in Table \ref{tab:lr} of  \cref{subsec:hyperpare}.}
{The batch size for SGDs is set to 2 for \textbf{Synthetic dataset ($T=10$)}, 50 for \textbf{Volatility of S$\mathit{\&}$P index}, and 100 for \textbf{Synthetic dataset ($T=500$)}. We employ the Keras API \cite{chollet2015keras} running on TensorFlow 2 to implement the GDs and SGDs. Additionally, the parameters for the ALM are listed in \cref{tab:paras}.  }

{To evaluate the performance of different methods under various initialization strategies, we conducted the following experiments: each method was repeated 10 times under each initialization strategy. In each repetition, we recorded the final test error 
and 
the training error. We then calculated their means (\textbf{TrainErr} and \textbf{TestErr}) and the corresponding standard deviations, and listed them  
 in \cref{tab:comp_methods}.
}
{Each row  records the results for a certain optimization method from different initialization strategies, with the best  \textbf{TrainErr} or \textbf{TestErr}
 highlighted in bold. 
Each column provides the results of all the optimization methods with the same initial values, where the best \textbf{TrainErr} and \textbf{TestErr} are highlighted underline.} 

%

{
\cref{tab:synT10_comp_methods} and 
Table 3c

demonstrate that for \textbf{Synthetic dataset ($T=10$)} and \textbf{Synthetic dataset ($T=500$)},  no matter which initialization strategy is employed, our ALM method achieves the best {\bf{TrainErr}} and {\bf{TestErr}} among all the methods.} 
{\cref{tab:sp500_comp_methods} illustrates that our ALM achieves  
the best {\bf{TrainErr}}
under two types of initialization strategies, and 
obtains 
the best  
{\bf{TestErr}} under three types of initialization strategies for  \textbf{Volatility of  S$\mathit{\&}$P index}. For any of the three datasets, our ALM  
achieves the best {\bf{TrainErr}} and {\bf{TestErr}} among all combinations of optimization methods and initialization strategies,
which we highlight in blue. 
 }
  
\setlength{\tabcolsep}{1.5pt}
\renewcommand\arraystretch{1.5}
\begin{table}[H]
    \centering
    \caption{Results of training Elman RNNs using different optimization methods and initialization strategies across multiple trials.}
    \scriptsize
    \label{tab:comp_methods}
    \subfloat[\textbf{Synthetic dataset ($T=10$)}: For the ALM method, the maximum iteration for the outer loop is 50 and 10 for the inner loop. For GDs and SGDs, the number of epochs is set to 500.]
    {
    \label{tab:synT10_comp_methods}
    \begin{tabular}{|c|c|c|c|c|c|c|}
    \hline
    &   & He  & $\mathcal{N}(0, 10^{-3})$ & $\mathcal{N}(0, 10^{-1})$ & Glorot  & LeCun \\ \hline
    \multirow{2}{*}{ALM} 
    & \textbf{TrainErr} & $\underline{0.345 \pm 0.24}$ & $\textcolor{blue}{{\underline{\bf{0.113 \pm 0.03}}}}$ & $\underline{0.143 \pm 0.04}$ & $\underline{0.206 \pm 0.10}$ & $\underline{0.279 \pm 0.22}$ \\ \cline{2-7} 
    & \textbf{TestErr}  & {$\underline{4.770 \pm 1.25}$} & {$\textcolor{blue}{\underline{\bf{4.437 \pm 0.28}}}$} & {$\underline{4.660\pm 0.35}$} & {$\underline{4.628 \pm 1.17}$} & {$\underline{4.650 \pm 0.62}$} \\ 
    \hline
    \multirow{2}{*}{GD}  
    & \textbf{TrainErr} & $4.459 \pm 0.77$ & $2.747 \pm 1.5\text{e-}6$ & $2.768 \pm 0.01$ & $1.814 \pm 0.27$ & $\bf{1.604 \pm 0.17}$ \\ 
    \cline{2-7} 
    & \textbf{TestErr}  & $6.432 \pm 2.15$ & $5.311 \pm 9.3{\text{e-}}6$  & $5.057 \pm 0.07$ & {$\bf{4.696 \pm 0.90}$} & $5.056 \pm 1.10$\\ 
    \hline
    \multirow{2}{*}{GDC}    
    & \textbf{TrainErr} & $\bf{1.479 \pm 0.32}$ & $2.769 \pm 1.4{\text{e-}}6$ & $2.768 \pm 0.01$ & $1.684 \pm 0.23$ & $1.502 \pm 0.26$ \\ \cline{2-7} 
    & \textbf{TestErr}  & $5.376 \pm 0.88$ & $5.079 \pm 1.0{\text{e-}}6$ & $5.057 \pm 0.07$ & {$\bf{4.922 \pm 1.20}$} & $5.266 \pm 0.96$\\ 
    \hline
    \multirow{2}{*}{GDNM} 
    & \textbf{TrainErr} & $2.689 \pm 0.40$ & $2.769 \pm 1.4{\text{e-}}6$ & $2.768 \pm 0.01$ & $3.340 \pm 0.54$ & $\bf{0.801 \pm 0.60}$\\ \cline{2-7} 
    & \textbf{TestErr}  & $6.169 \pm 2.06$ & $5.079 \pm 1.0{\text{e-}}6$ & $5.057 \pm 0.07$  & $7.469 \pm 2.30$ & {$\bf{4.844 \pm 0.64}$}\\ 
    \hline
    \multirow{2}{*}{SGD}    
    & \textbf{TrainErr} & $\bf{2.224 \pm 0.02}$ & $2.247 \pm 0.02$ & $2.232 \pm 0.02$ & $2.238 \pm 0.02$ & $2.225 \pm 0.02$\\ \cline{2-7} 
    & \textbf{TestErr}  & $6.455 \pm 0.23$ &  {$\bf{6.230 \pm 0.23}$} &$6.373 \pm 0.18$ & $6.543 \pm 0.23$ & $6.446 \pm 0.18$ \\
    \hline
    \multirow{2}{*}{Adam}  
    & \textbf{TrainErr} & $2.283 \pm 0.07$ & $2.244 \pm 0.02$ & $2.237 \pm 0.02$ & $\bf{2.231 \pm 0.01}$ & $2.239 \pm 0.03$\\ 
    \cline{2-7} 
    & \textbf{TestErr}  & {$\bf{6.335 \pm 0.61}$} & $6.432 \pm 0.27$ & $6.411 \pm 0.25$ & $6.508 \pm 0.14$ & $6.406 \pm 0.20$\\ 
    \hline
    \end{tabular}
    }
    \quad
    \quad
    \subfloat[\textbf{Volatility of S$\mathit{\&}$P index}: For the ALM method, the maximum iteration for the outer loop is 200 and 500 for the inner loop. For GDs and SGDs, the number of epochs is set to 5000.]{
    \label{tab:sp500_comp_methods}
    \begin{tabular}{|c|c|c|c|c|c|c|}
    \hline
    &   & He  & $\mathcal{N}(0, 10^{-3})$ & $\mathcal{N}(0, 10^{-1})$ & Glorot  & LeCun \\ \hline
    \multirow{2}{*}{ALM} 
    & \textbf{TrainErr} & $0.058 \pm 0.02$ & $\underline{0.004 \pm 3.6{\text{e-}}{5}}$ & $\textcolor{blue}{\underline{\bf{0.003 \pm 1.4{\text{e-}}4}}}$ & $0.009 \pm 0.002$ & $0.013 \pm 0.002$ \\ \cline{2-7} 
    & \textbf{TestErr}  & $0.229 \pm 0.13$ & {$\underline{0.041 \pm 4.7{\text{e-}}4}$} & {$\textcolor{blue}{\underline{\bf{0.032 \pm 0.005}}}$} & {$\underline{0.064 \pm 0.04}$} & $0.053 \pm 0.03$ \\ 
    \hline
    \multirow{2}{*}{GD}     
    & \textbf{TrainErr} & $\bf{0.005 \pm 0.001}$ & $0.015 \pm 1.8{\text{e-}}4$ & $0.012 \pm 9.2{\text{e-}}4$ & $0.020 \pm 0.003$ & $0.025 \pm 0.006$ \\ 
    \cline{2-7} 
    & \textbf{TestErr}  & $0.124 \pm 0.10$ & $0.077 \pm 0.03$  & {$\bf{0.0429 \pm 0.01}$} & $0.206 \pm 0.20$ & $0.307 \pm 0.20$\\ 
    \hline
    \multirow{2}{*}{GDC}    
    & \textbf{TrainErr} & $0.567 \pm 0.47$ & $0.015 \pm 1.8{\text{e-}}4$ & $0.016 \pm 0.009$ & $\underline{\bf{0.003 \pm 5.6{\text{e-4}}}}$ & $0.011 \pm 0.003$ \\ \cline{2-7} 
    & \textbf{TestErr}  & $1.135 \pm 0.55$ & $0.077 \pm 0.03$ & $0.047 \pm 0.02$ & $0.107 \pm 0.03$ & {$\underline{\bf{0.041 \pm 0.01}}$}\\ 
    \hline
    \multirow{2}{*}{GDNM}  
    & \textbf{TrainErr} & $0.005 \pm 0.001$ & $0.015 \pm 1.8{\text{e-}}4$ & $0.012 \pm 9.2{\text{e-}}4$ & $\bf{0.003 \pm 5.8{\text{e-}}4}$ & $\underline{0.004 \pm 6.6{\text{e-}}4}$\\ \cline{2-7} 
    & \textbf{TestErr}  & $0.124 \pm 0.10$ & $0.077 \pm 0.03$ & {$\bf{0.043} \pm 0.01$}  & $0.097 \pm 0.03$ & $0.102 \pm 0.02$\\ 
    \hline
    \multirow{2}{*}{SGD}    
    & \textbf{TrainErr} & $\underline{\bf{0.005 \pm 1.8\text{e-}4}}$ & $0.006 \pm 0.002$ & $0.006 \pm 0.002$ & $0.006 \pm 0.002$ & $0.006 \pm 0.002$\\ \cline{2-7} 
    & \textbf{TestErr}  & {$\underline{\bf{0.072 \pm 0.01}}$} & $0.095 \pm 0.02$ & $0.086 \pm 0.02$ & $0.085 \pm 0.01$ & $0.096 \pm 0.01$ \\
    \hline
    \multirow{2}{*}{Adam}   
    & \textbf{TrainErr} & $0.006 \pm 0.001$ & $\bf{0.005 \pm 7.6{\text{e-}}4}$ & $0.006 \pm 0.002$ & $0.006 \pm 0.001$ & $\bf{0.005 \pm 7.6{\text{e-4}}}$\\ 
    \cline{2-7} 
    & \textbf{TestErr}  & $0.079 \pm 0.01$ & {$\bf{0.074 \pm 0.01}$} & $0.084 \pm 0.01$ & $0.080 \pm 0.02$ & $0.080 \pm 0.02$\\ 
    \hline
    \end{tabular}
    }
\end{table}

\begin{table}[H]
\ContinuedFloat
\centering
\footnotesize
\captionsetup{position=top} 
    \subfloat[\textbf{Synthetic dataset ($T=500$)}: For the ALM method, the maximum iteration for the outer loop is 100 and 500 for the inner loop. For GDs and SGDs, the number of epochs is set to 1000.]{
    \label{tab:synT500_comp_methods}
    \resizebox{0.97\textwidth}{!}{%
    \begin{tabular}{|c|c|c|c|c|c|c|}
    \hline
    &   & He  & $\mathcal{N}(0, 10^{-3})$ & $\mathcal{N}(0, 10^{-1})$ & Glorot  & LeCun \\ \hline
    \multirow{2}{*}{ALM} 
    & \textbf{TrainErr} & $\underline{4.639 \pm 0.78}$ & $\textcolor{blue}{\underline{\bf{3.461 \pm 0.06}}}$ & $\underline{3.472 \pm 0.05}$ & $\underline{3.472 \pm 0.06}$ & $\underline{3.475 \pm 0.06}$ \\ \cline{2-7} 
    & \textbf{TestErr}  & {$\underline{14.77 \pm 0.93}$} & {$\underline{12.418 \pm 0.16}$} & {$\underline{12.407 \pm 0.27}$} & {$\textcolor{blue}{\underline{\bf{12.394 \pm 0.22}}}$} & {$\underline{12.517 \pm 0.16}$} \\ 
    \hline
    \multirow{2}{*}{GD}     
    & \textbf{TrainErr} & $58.137 \pm 2.42$ & $30.010 \pm 0.003$ & $30.013 \pm 0.008$ & $30.000 \pm 0.008$ & $\bf{29.985 \pm 0.007}$ \\ 
    \cline{2-7} 
    & \textbf{TestErr}  & $58.314 \pm 2.76$ & $28.644 \pm 0.006$  & $28.641 \pm 0.009$ & $28.630 \pm 0.006$ & {$\bf{28.626 \pm 0.009}$}\\ 
    \hline
    \multirow{2}{*}{GDC}    
    & \textbf{TrainErr} & $250.471 \pm 399.70$ & $\bf{30.004 \pm 0.003}$ & $30.144 \pm 0.001$ & $30.143 \pm 8.8{\text{e-}}4$ & $30.144 \pm 0.001$ \\ \cline{2-7} 
    & \textbf{TestErr}  & $119.007 \pm 66.71$ & {$\bf{28.640 \pm 0.007}$} & $28.723 \pm 0.007$ & $28.730 \pm 0.006$ & $28.725 \pm 0.01$\\ 
    \hline
    \multirow{2}{*}{GDNM}  
    & \textbf{TrainErr} & $58.137 \pm 2.42$ & $30.010 \pm 0.003$ & $30.013 \pm 0.008$ & $30.000 \pm 0.008$ & $\bf{29.985 \pm 0.007}$\\ \cline{2-7} 
    & \textbf{TestErr}  & $58.314 \pm 2.76$ & $28.644 \pm 0.006$ & $28.641 \pm 0.009$  & $28.730 \pm 0.006$ & {$\bf{28.626 \pm 0.009}$}\\ 
    \hline
    \multirow{2}{*}{SGD}    
    & \textbf{TrainErr} & $30.142 \pm 3.5{\text{e-}}6$ & $30.142 \pm 4.7{\text{e-}}6$ & $\bf{30.142 \pm 5.2{\text{e-}}6}$ & $30.142 \pm 4.4\text{e-}6$ & $30.142 \pm 4.8{\text{e-}}6$\\ \cline{2-7} 
    & \textbf{TestErr}  &{$\bf{28.725 \pm 3.2{\text{e-}}5}$} & $28.725 \pm 4.4{\text{e-}}5$ & $28.725 \pm 4.7{\text{e-}}5$ & $28.725 \pm 3.9{\text{e-}}5$ & $28.725 \pm 4.1{\text{e-}}5$ \\
    \hline
    \multirow{2}{*}{Adam}   & \textbf{TrainErr} & $\bf{30.142 \pm 7.1{\text{e-}}5}$ & $30.142 \pm 6.5{\text{e-}}5$ & $30.142 \pm 7.3{\text{e-}}5$ & $30.142 \pm 5.1{\text{e-}}5$ & $30.142 \pm 5.7{\text{e-}}5$\\ 
    \cline{2-7} 
    & \textbf{TestErr}  & $28.726 \pm 6.1{\text{e-}}4$ & $28.725 \pm 5.0{\text{e-}}4$ & $28.726 \pm 5.9{\text{e-}}4$ & $28.726 \pm 5.0{\text{e-}}4$ & {$\bf{28.725 \pm 4.8{\text{e-}}4}$}\\ 
    \hline
    \end{tabular}
    }
    }
\end{table}

\begin{figure}[H]
    \centering
    \subfloat[\textbf{Volatility of S$\mathit{\&}$P index}]{
    \label{fig_i:comalg_sp500}
    \includegraphics[width=0.482\textwidth]{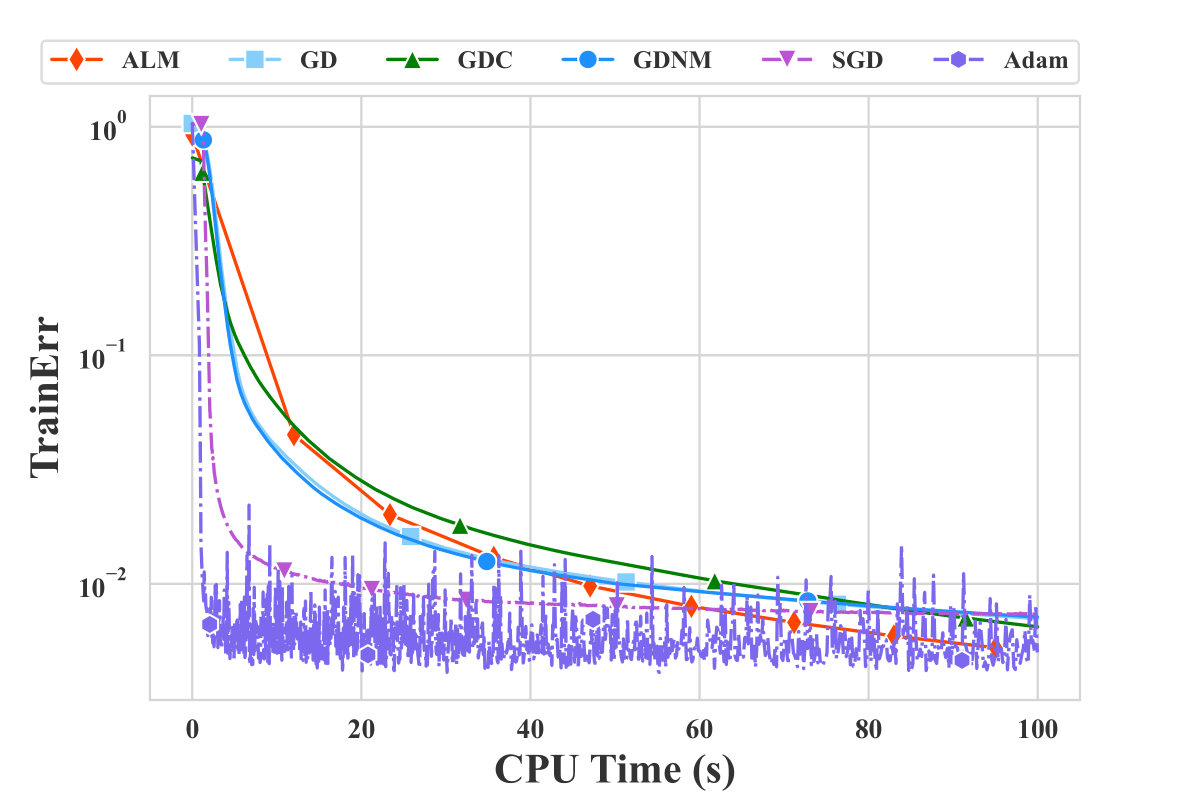}
    \includegraphics[width=0.482\textwidth]{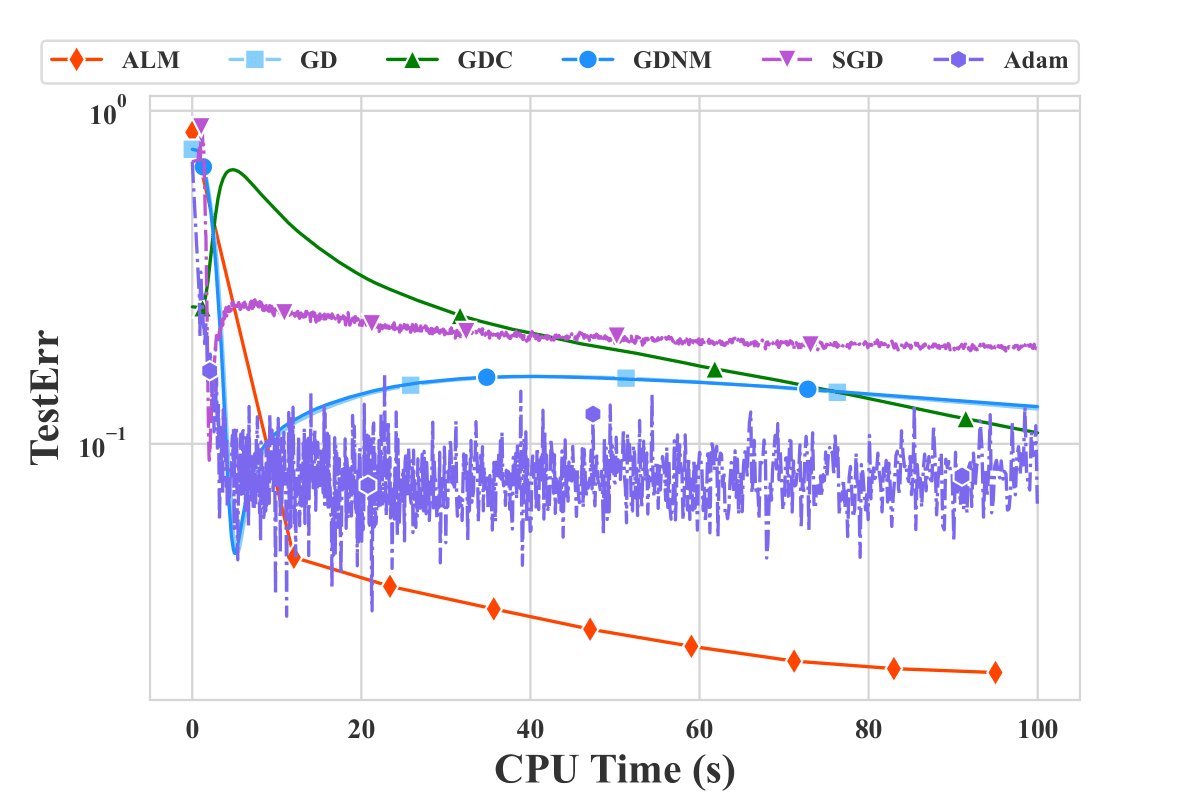}
    }
    \quad
    \subfloat[\bf{Synthetic dataset ($T = 500$)}]{
    \label{fig_i:comalg_synT500}
    \includegraphics[width=0.481\textwidth]{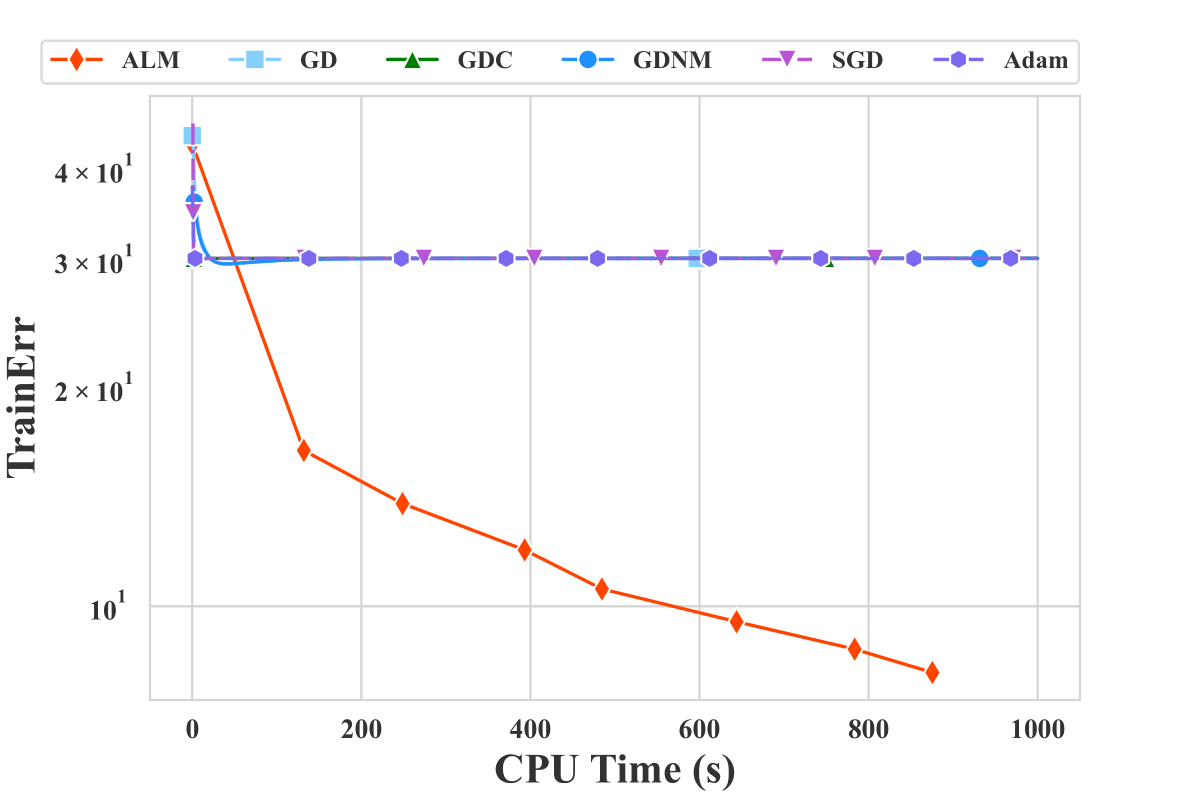}
    \includegraphics[width=0.481\textwidth]{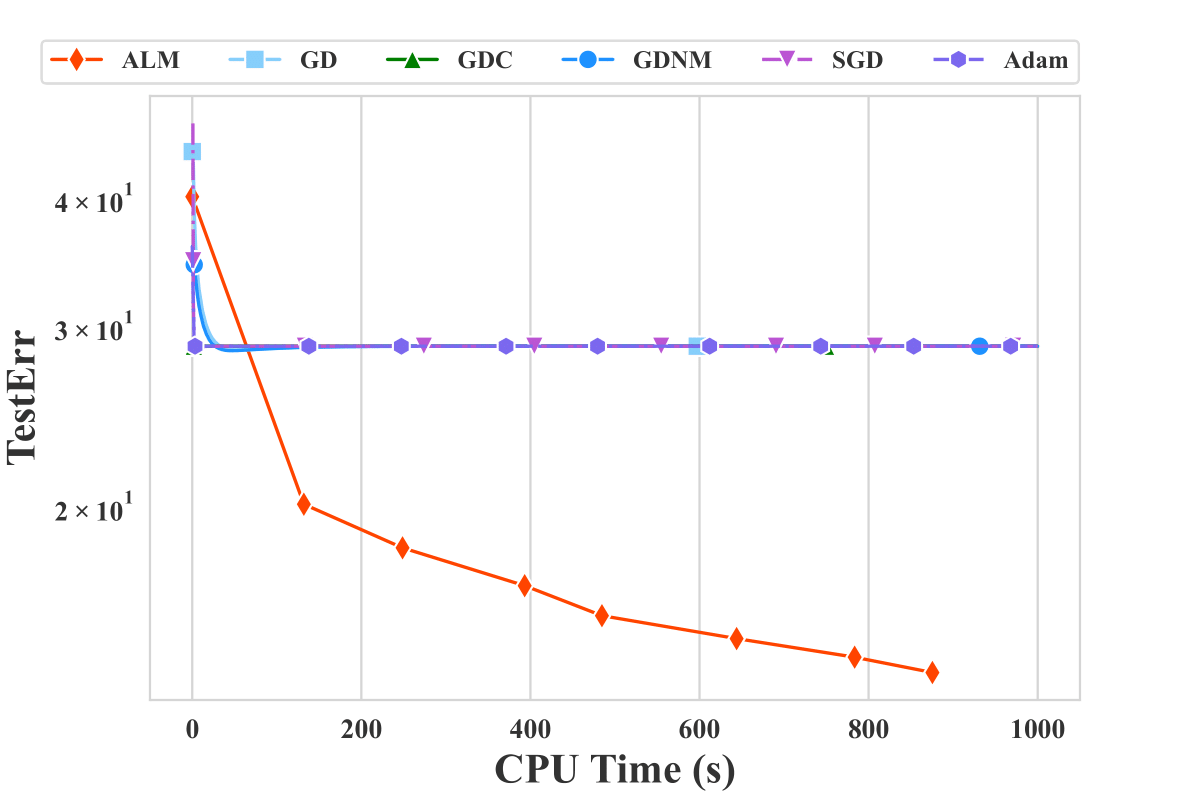}
    }
    \caption{Comparisons of the performance of the ALM, GDs and SGDs across different datasets.}
    \label{fig:comalg}
\end{figure} 
{We plot in \cref{fig:comalg}   the \textbf{TrainErr} and \textbf{TestErr} versus CPU time} {measured in seconds using  \textbf{Volatility of S$\mathit{\&}$P index} and  \textbf{Synthetic dataset ($T=500$)}. Each line corresponds to a certain optimization method as indicated in the legend, with  its most appropriate initialization strategy
that leads to the final {\textbf{TestErr}} in bold as outlined in \cref{tab:comp_methods}. For the real world dataset, \textbf{Volatility of S$\mathit{\&}$P index}, the ALM achieves the smallest test error among all the methods. For the larger-scale \textbf{Synthetic dataset ($T=500$)} with $N_{\mathbf{w}} = 1.81 \times 10^{4}$, $N_{\mathbf{a}} = 3.03 \times 10^{3}$ and $r=500$, the ALM exhibits superior performance in terms of both training and test errors.}

\section{Conclusion}
In this paper, the minimization model (\ref{pb:SAA_rnn}) for training  RNNs is equivalently reformulated as problem (\ref{prob:cor}) by using auxiliary variables.   We propose the ALM in \cref{Alg:ALM} with \cref{Alg:BCD} to solve the regularized problem (\ref{prob:ror}). The BCD method in  \cref{Alg:BCD} is efficient for solving the subproblems of the ALM, which has a closed-form solution for each block problem. We establish the solid convergence results of the ALM to a KKT point of problem (\ref{prob:ror}), as well as the finite termination of the BCD method for the subproblem of the ALM at each iteration.
The efficiency and effectiveness of the ALM for training RNNs are demonstrated by
numerical results with real world datasets and synthetic data, and comparison with state-of-art algorithms.
An interesting further study is to extend our algorithm to a stochastic algorithm that is potential to deal with problems of huge samples efficiently. {We believe that it is possible to extend our method and its corresponding analysis to other more complex RNN architectures, such as LSTMs, and will give rigorous analysis in the near future.}

\section*{Acknowledgments}
We are grateful to Prof. M. Mahoney and the anonymous referees for valuable comments.

\appendix
\section{Proofs of the lemmas}\label{appendixa}
\subsection{Proof of Lemma \ref{lemma: NNAMCQ}}
\begin{proof}
By direct computation,
\begin{equation}
\begin{split}
\label{subdiff_2cons_details}
    J\mathcal{C}_1(\mathbf{s})^{\top} \xi + \partial \big(\zeta^{\sss \top} {\cal C}_2(\mathbf{s})\big)
    = \begin{bmatrix}
    J_{\bf{z}}\mathcal{C}_1(\mathbf{s})^{\top} \xi\\
    {J_{\mathbf{h}}\mathcal{C}_1(\bf{s})^{\top} \xi} + J_{\bf{h}}\mathcal{C}_2(\mathbf{s})^{\top} \zeta\\
    J_{\mathbf{u}}\mathcal{C}_1(\mathbf{s})^{\top} \xi + \partial_{\mathbf{u}}\left(\zeta^{\top} \mathcal{C}_2(\mathbf{s})\right)
    \end{bmatrix},
\end{split}
 \end{equation}
where
\begin{gather}
    J_{\mathbf{h}}\mathcal{C}_1(\mathbf{s})^{\top} \xi + J_{\mathbf{h}}\mathcal{C}_2(\mathbf{s})^{\top} \zeta = \left[-{W}^{\top}\xi_2 + \zeta_1; ...; -{W}^{\top}\xi_{T} + \zeta_{T-1}; \zeta_{T}\right], \label{CQ_partial_h}\\
    J_{\mathbf{u}}\mathcal{C}_1(\mathbf{s})^{\top} \xi + \partial_{\mathbf{u}}\left(\zeta^{\top} \mathcal{C}_2(\mathbf{s})\right)
    = \xi + \partial_{\mathbf{u}}(-\zeta ^{\top}(\mathbf{u})_+). \label{CQ_partial_u}
\end{gather}
In order to achieve ${0} \in J\mathcal{C}_1(\mathbf{s})^{\top} \xi + \partial \big(\zeta^{\sss \top} {\cal C}_2(\mathbf{s})\big)$, it is necessary to require $\zeta_{T} = 0$, which is located in the last row of $J_{\mathbf{h}}\mathcal{C}_1(\mathbf{s})^{\top} \xi + J_{\mathbf{h}}\mathcal{C}_2(\mathbf{s})^{\top} \zeta$. Using $\zeta_{T}=0$ and \cref{CQ_partial_u}, we find  $\xi_{T} = 0$.
Substituting the results into \cref{CQ_partial_h} and \cref{CQ_partial_u} recursively and using \cref{CQ_partial_h} and \cref{CQ_partial_u} equal $0$, we can derive that there exist no nonzero vectors $\xi$ and $\zeta$ such that ${0} \in J\mathcal{C}_1(\mathbf{s})^{\sss \top} \xi + \partial \big(\zeta^{\sss \top} {\cal C}_2(\mathbf{s})\big)$.
\end{proof}

\subsection{Proof of Lemma \ref{lemma:solutionset_nonempty}}
\begin{proof}
It is clear that ${0} \in \mathcal{D}_{\cal R}(\rho)$ and consequently $\mathcal{D}_{\cal R}(\rho)$ is nonempty.
Moreover,
\begin{eqnarray}
\label{AWVb}\|{A}\|_F^2 \leq \rho/\lambda_1, \|{W}\|_F^2 \leq \rho/\lambda_2, \|{V}\|_F^2 \leq \rho/\lambda_3,\\
\|{b}\|^2 \leq \rho/\lambda_4, \|{c}\|^2 \leq \rho/\lambda_5, {\|\bf{u}\|^2} \leq \rho/\lambda_6,
\notag
\end{eqnarray}
from  $\mathcal{R}(\mathbf{s}) \leq \rho$, $\ell(\mathbf{s})  \geq 0$ and $P(\mathbf{s}) \geq 0$. Hence for ${\bf{s}}=({\bf{z}};{\bf{h}};{\bf{u}})\in {\cal D}_{\cal R}(\rho)$, $\bf{z}$ and  $\bf{u}$ are bounded, and
consequently $\mathbf{h}$ is also bounded because ${\mathbf{h}} = ({\mathbf{u}})_+$.

Up to now, we have obtained the boundedness of $\mathcal{D}_{\cal R}(\rho)$. By the continuity of $\mathcal{R}(\mathbf{s})$, we can assert that $\mathcal{D}_{\cal R}(\rho)$ is closed according to \cite[ Theorem 1.6]{rockafellar2009variational}.
Thus we can claim that the level set $\mathcal{D}_{\cal R}(\rho)$ is nonempty and compact for any $\rho > {\cal R}(0)$. Then the solution set $\mathcal{S}_1$ is nonempty and compact according to \cite[Proposition A.8]{bertsekas1997nonlinear}.
\end{proof}

\subsection{ Proof of Lemma \ref{gradient}}
\begin{proof}
Statement  \ref{lem_ele:func_lb}   can be easily obtained by the expression of ${\cal{L}}({\bf s},\xi,\zeta,\gamma)$ in  \eqref{alm function for 2constraints} and the nonnegativity of ${\cal R}(\bf{s})$ in \eqref{prob:ror}.

For statement \ref{lem_ele:levelset_pro}, the nonemptyness and closedness of  the level set $\Omega_{{\cal L}}(\hat \Gamma)$ are obvious. Moreover,  we have ${\cal R}({\bf{s}})$ and $\|{\bf h} - ({\bf u})_+ + \frac{\zeta}{\gamma}\|$ are upper bounded for all ${\bf s}$ in $\Omega_{{\cal L}}(\hat \Gamma)$. The function ${\cal R}({\bf s})$ is upper bounded implies that ${\bf w}, \mathbf{a}, {\bf u}$ are bounded. Then the boundedness of $\|{\bf h} - ({\bf u})_+ + \frac{\zeta}{\gamma}\|$ indicates that $\bf h$ is also bounded. Thus, ${\bf s}$ is bounded and statement \ref{lem_ele:levelset_pro} holds.

Statements \ref{lem_ele:grad_z} and \ref{lem_ele:grad_h} can be obtained by direct computation.
\end{proof}

\subsection{Proof of Lemma \ref{lem3.2}}
\begin{proof}
Using \cref{gradient} \ref{lem_ele:grad_z}, we have
\begin{align}
\label{cond1}
 &\nabla_{\bf{z}} {\cal L}({\bf z},{\bf h}', {\bf u}',\xi,\zeta,\gamma)  - \nabla_{\bf{z}} {\cal L}({\bf z},{\bf h}, {\bf u},\xi,\zeta,\gamma)\\
=\ &\left[
\begin{array}{c}
\gamma \Delta_1 {\bf{w}} -\left(\Psi(\mathbf{h}')-\Psi(\mathbf{h}) \right)^{\sss \top} \xi - \gamma \Delta_3 \\
\frac{2}{T} \sum_{t=1}^T  \Delta_{2,t}  {\bf a} - \frac{2}{T} \sum_{t=1}^T \left(\Phi(h'_t) - \Phi(h_t)\right)^{\sss \top} y_t
\end{array}
\right],
\notag
\end{align}
where  $\Delta_1 = \Psi(\mathbf{h}')^{\sss \top} \Psi(\mathbf{h}') - \Psi(\mathbf{h})^{\sss \top} \Psi(\mathbf{h})$ and $\Delta_{2,t} = \Phi(h'_t)^{\sss \top}\Phi(h'_t) - \Phi(h_t)^{\sss \top} \Phi(h_t) $ and
$\Delta_3 = \Psi(\mathbf{h}'){\bf u}' - \Psi({\mathbf{h}}){\bf u}$.
It is easy to see that
\begin{eqnarray}
\label{cond2} \|\Delta_1\|
&=& \nonumber\|\Psi(\mathbf{h}')^{\sss \top} \Psi(\mathbf{h}') - \Psi(\mathbf{h}')^{\sss \top} \Psi(\mathbf{h})
+ \Psi(\mathbf{h}')^{\sss \top} \Psi(\mathbf{h})\
-\Psi(\mathbf{h})^{\sss \top} \Psi(\mathbf{h})
 \|\\
 &\le &  \left( \|\Psi(\mathbf{h}')\| + \|\Psi(\mathbf{h})\| \right) \|\Psi(\mathbf{h}') - \Psi(\mathbf{h}) \|.
\end{eqnarray}
Similarly, we have
\begin{eqnarray}
  \|\Delta_{2,t}\| &\le& \left( \|\Phi(h'_t)\| + \|\Phi(h_t)\| \right) \|\Phi(h'_t) - \Phi(h_t)\|,\ \forall t\in [T],\\
  \|\Delta_3\| &\le& \|\Psi({\bf{h}'})\|\|{\bf u}' - {\bf u}\| + \|{\bf u}\|\|\Psi({\bf h}') - \Psi({\bf h})\|.
\end{eqnarray}

Since ${\bf s}, {\bf s}' \in \Omega_{{\cal L}}(\hat \Gamma)$, we know that
\begin{eqnarray*}
{\ell}({\bf s}) + P({\bf s}) +\frac{\gamma}{2} {\left\|{\mathbf{u}}-\Psi(\mathbf{h})\mathbf{w} + \frac{\xi}{\gamma}\right\|}^2
    + \frac{\gamma}{2} {\left\|\mathbf{h}-(\mathbf{u})_{+}+ \frac{\zeta}{\gamma}\right\|}^2\le \delta.
\end{eqnarray*}
This, together with the expressions of $\ell(\bf{s})$ in \eqref{prob:ror} and $P(\mathbf{s})$ in \eqref{P}, yields
{\small
\begin{align}
\label{cond5}
    \quad\quad \|W\|_F \le  \sqrt{\frac{\delta}{\lambda_2}}, \ \|{\bf a}\| \le \sqrt{\frac{\delta}{\min\{\lambda_1,\lambda_5\}}},\ \|{\bf{w}}\|\le \sqrt{ \frac{\delta}{\min\{\lambda_2,\lambda_3,\lambda_4\}}},\ \|{\bf u}\| \le \sqrt{\frac{\delta}{\lambda_6}}.
\end{align}}Moreover, since
$\|\mathbf{h}\| - \|(\mathbf{u})_{+} - \tfrac{\zeta}{\gamma}\| \leq \|\mathbf{h}-(\mathbf{u})_{+} + \tfrac{\zeta}{\gamma}\| \leq \sqrt{\tfrac{2 \delta}{\gamma}}$,
we find
\begin{eqnarray}
\label{cond6}
\|\mathbf{h}\| \le \delta_0.
\end{eqnarray}

Using \eqref{PhiPsi}, we can easily obtain that \begin{eqnarray}
\label{cond7}
\|\Psi({\bf{h}})- \Psi({\bf{h}}')\| \leq \sqrt{r}\|{\bf{h}}' - {\bf{h}}\|, \quad \|\Phi(h_t') - \Phi(h_t)\| \leq \sqrt{m}\|{h}_{t}' - {h}_{t}\|,\\
\label{cond8}
\|\Psi({\bf{h}})\| = \sqrt{r(\|\mathbf{h}\|^2 + \|X\|^2 + T)}, \quad \|\Phi(h_t)\| = \sqrt{m(\|{h_t}\|^2 +1)}.
\end{eqnarray}

Using the facts that for any  $\iota_1,\iota_1, \ldots,\iota_j \in \mathbb{R}$, any  $g_1,g_2,\ldots,g_j \in \mathbb{R}^{ n_r}$, and any matrices $B_1,B_2,\ldots,B_j \in \mathbb{R}^{n_c \times n_r}$, $\|B_1\|\le \|B_1\|_F$, and
\begin{eqnarray}
\label{cond}\quad\quad
\|\sum_{i=1}^{(j)}\iota_j B_j g_j \| \le \sum_{i=1}^j |\iota_j| \|B_j\|\|g_j\| ,\quad
\sum_{i=1}^j\|\iota_i g_i\| \le \max_{1\le i\le j}\{|\iota_i|\}\sqrt{j}\|(g_1;\ldots;g_j)\|,
\end{eqnarray}
taking the norm of both sides of \eqref{cond1}, and employing \eqref{cond2}-\eqref{cond8},
we can get \eqref{grd_lip_z} with the expression of $L_1(\xi,\zeta, \gamma,\hat{r})$ in \eqref{L12} as desired.

Using \cref{gradient} \ref{lem_ele:grad_h}, we have by direct computation
\begin{eqnarray*}
& & \nabla_{\bf{h}} {\cal L}({\bf z},{\bf h}, {\bf u}',\xi,\zeta,\gamma)  - \nabla_{\bf{h}} {\cal L}({\bf z},{\bf h}, {\bf u},\xi,\zeta,\gamma)\\
&=& \gamma W^T \sum_{t=1}^{T-1} (u_{t+1} - u'_{t+1}) + \gamma \sum_{t=1}^T ((u_t)_+ -(u'_t)_+).
\end{eqnarray*}
Taking the norm of both sides of the above system of equations, employing \eqref{cond5},
\eqref{cond}, and the facts
$\|(u_t)_+ - (u'_t)_+\| \le \|u'_t-u_t\|$ for each $t$,
we can get \eqref{grd_lip_h} with
$L_2(\xi,\zeta, \gamma, \hat r)$ in the form of \eqref{L12} as desired.
\end{proof}

\subsection{Proof of Lemma \ref{lemma:sj_in_O}}
\begin{proof}
By \cref{subpro_z_subproblem}, \cref{subpro_h_subproblem} and \cref{subpro_u_subproblem}, we know that for any $j \in \mathbb{N}$:
\begin{align}
\label{des_L_kj}
   \mathcal{L}\big(\mathbf{s}^{(j)},\xi,\zeta,\gamma\big) \leq \mathcal{L}\big(\mathbf{s}^{(j)}_{\bf{h}},\xi,\zeta,\gamma\big)
   {\leq} \mathcal{L}\big(\mathbf{s}^{(j)}_{\bf{z}},\xi,\zeta,\gamma\big)
    {\leq} \mathcal{L}\big(\mathbf{s}^{(j-1)},\xi,\zeta,\gamma\big).
\end{align}
By the definition of $\Gamma$ in \cref{Alg:BCD}
 and  \cref{des_L_kj}, we can deduce that
\begin{align}
\label{cond:func_value bounded}
    \mathcal{L}\big(\mathbf{s}^{(j)},\xi,\zeta,\gamma\big) \leq \Gamma, \quad \forall j \in \mathbb{N}.
\end{align}
By the definition of $\Omega_{\mathcal{L}}(\Gamma)$ and \cref{gradient} \ref{lem_ele:levelset_pro}, the proof is completed.
\end{proof}

\subsection{Proof of Lemma \ref{lemma: directional derivative}}
\begin{proof}
It is clear that $\Omega_{\mathcal{L}}(\Gamma)$ is compact by \cref{gradient} \ref{lem_ele:levelset_pro}.
For the smooth part $g$ in $\mathcal{L}$, its gradient for those $\mathbf{s} \in \Omega_{\mathcal{L}}(\Gamma)$ is upper bounded. Now, let us turn to consider the nonsmooth part $q$ in $\mathcal{L}$. Let  $\mathbf{s} = (\bf{z}; \bf{h};\bf{u})$ and $\mathbf{s}' = (\bf{z'}; \bf{h'};\bf{u'})$ be any two points in $\Omega_{\mathcal{L}}(\Gamma)$. We have
\begin{align*}
    &\big|q(\mathbf{s}', \zeta, \gamma) - q(\mathbf{s}, \zeta, \gamma)\big|
    \\
    \leq\ &\tfrac{\gamma}{2}\Big| \big\|\mathbf{h}'-(\mathbf{u}')_{+}+\tfrac{\zeta}{\gamma}\big\|^2 - \big\|\mathbf{h}-(\mathbf{u})_{+}+\tfrac{\zeta}{\gamma}\big\|^2\Big|\\
    \leq\ &\tfrac{\gamma}{2}\big\|\mathbf{h}'-(\mathbf{u}')_{+}-(\mathbf{h}-(\mathbf{u})_{+})\big\| \big\|\mathbf{h}'-(\mathbf{u}')_{+} + \mathbf{h}-(\mathbf{u})_{+} + 2\tfrac{\zeta}{\gamma}\big\|\\
    \leq\ & \left(2 \gamma \max_{{\bf{s}}\in \Omega_{\mathcal{L}}(\Gamma)}\{\|\mathbf{h}\|_{\infty}+ \|\mathbf{u}\|_{\infty}\} + \|\zeta\|\right)(\|\mathbf{h}' - \mathbf{h}\|+\|\mathbf{u}' - \mathbf{u}\|).
\end{align*}
Up to now, we have proved the Lipschitz continuity of $g$ and $q$ on $\Omega_{\mathcal{L}}(\Gamma)$, which implies that $\mathcal{L}$ is Lipschitz continuous on $\Omega_{\mathcal{L}}(\Gamma)$.

The above result, together with the piecewise smoothness of function $\mathcal{L}$, yields that $\mathcal{L}$ is directionally differentiable on $\Omega_{\mathcal{L}}(\Gamma)$ by \cite{mifflin1977semismooth}.
\end{proof}

\subsection{Proof of Lemma \ref{lemma:regular}}
\begin{proof}
By  \cref{almfunc_subproblem_2cons}, the directional derivative of $\mathcal{L}$ at $\bar{\mathbf{s}}$ along ${d} \in \mathbb{R}^{N_\mathbf{w} + N_\mathbf{a} + 2rT}$ refers to $\mathcal{L}'(\bar{\mathbf{s}},\xi,\zeta,\gamma ; {d}) = g'(\bar{\mathbf{s}}, \xi, \gamma ; {d}) + q'(\bar{\mathbf{s}}, \zeta, \gamma ; {d})$.
It is clear that
\begin{align}
\label{dir_g}
    g'(\bar{\mathbf{s}},\xi,\gamma ; {d}) = \langle \nabla_{\mathbf{z}}g(\bar{\mathbf{s}}, \xi, \gamma), {d}_{\bf{z}} \rangle + \langle \nabla_{\mathbf{h}}g(\bar{\mathbf{s}}, \xi, \gamma), {d}_{\bf{h}} \rangle + \langle \nabla_{\mathbf{u}}g(\bar{\mathbf{s}}, \xi, \gamma), {d}_{{\bf{u}}} \rangle.
\end{align}
It remains to consider the directional derivative of nonsmooth part $q$. The function $q$ can be separated into $rT$ one dimensional functions with the same structure, i.e.,
\begin{displaymath}
    \phi(\bar{{h}}, \bar{{u}}) =(\bar{{h}} - (\bar{{u}})_{+} + \nu_{1})^2 - \nu_1^2,
\end{displaymath}
where $\bar{{h}}, \bar{{u}} \in \mathbb{R}$ are variables and $\nu_{1} \in \mathbb{R}$ is a constant.
The directional derivative of $\phi$ along the direction $ {({\bar{d}}_{1}; {\bar{d}}_{2})} \in \mathbb{R}^{2}$ can be represented as the sum of the directional derivatives of $\phi$ along ${({\bar{d}}_{1}; 0)}$ and ${(0; {\bar{d}}_{2})}$ by the definition of directional derivative, i.e.,
{\small{
\begin{align*}
    \phi'\big(\bar{{h}}, \bar{{u}};({\bar{d}}_{1}, {\bar{d}}_{2})\big) &= \lim_{\lambda \downarrow 0}\frac{\Big(\bar{{h}} + \lambda {\bar{d}}_{1} - \big(\bar{{u}} + \lambda {\bar{d}}_{2}\big)_{+} + \nu_{1}\Big)^2 - \Big(\bar{{h}} - \big(\bar{{u}}\big)_{+} + \nu_{1} \Big)^2}{\lambda}\\
    &= \phi'\big(\bar{{h}}, \bar{{u}};({\bar{d}}_{1}, 0)\big)
    + \phi'\big(\bar{{h}}, \bar{{u}};(0, {\bar{d}}_{2})\big)
    - \lim_{\lambda \downarrow 0} \
    \frac{2\lambda {\bar{d}}_{1}\big(({u}+ \lambda {\bar{d}}_{2})_+ - ({u})_+\big)}{\lambda}
\end{align*}}}where
\begin{align*}
     \phi'\big(\bar{{h}}, \bar{{u}};({\bar{d}}_{1}, 0)\big)
     &= \lim_{\lambda \downarrow 0} \frac{\left(\bar{{h}} + \lambda {\bar{d}}_{1} - (\bar{{u}})_{+} + \nu_{1}\right)^2 - \left(\bar{{h}} - (\bar{{u}})_{+} + \nu_{1} \right)^2}{\lambda}\\
    &=\lim_{\lambda \downarrow 0} \frac{(\bar{{h}} + \lambda {\bar{d}}_{1} + \nu_{1})^2-(\bar{{h}} + {\nu_{1}})^2-2(\lambda {\bar{d}}_{1})(\bar{{u}})_{+}}{\lambda},
\end{align*}
\begin{align*}
    \phi'\big(\bar{{h}}, \bar{{u}};(0, d_{2})\big)
     &= \lim_{\lambda \downarrow 0}\frac{\left(\bar{{h}} + \nu_{1} - (\bar{{u}} + \lambda {\bar{d}}_{2})_{+} \right)^2 - \left(\bar{{h}} + \nu_{1} - (\bar{{u}})_{+} \right)^2}{\lambda}\\
    &= \lim_{\lambda \downarrow 0} \frac{(\bar{{u}} + \lambda {\bar{d}}_{2})^2_+ -(\bar{{u}})^2_{+} -2(\bar{{h}} + \nu_{1})\big((\bar{{u}} + \lambda {\bar{d}}_{2})_{+} - (\bar{{u}})_{+}\big)}{\lambda},
\end{align*}
and $\lim_{\lambda \downarrow 0}\tfrac{2\lambda {\bar{d}}_{1}(({u} + \lambda {\bar{d}}_{2})_{+} - ({u})_{+})}{\lambda} = 0$. By setting $\bar{h} = \bar{\mathbf{h}}_{i}$, $\bar{u} = \bar{\mathbf{u}}_{i}$, ${\bar{d}}_{1} = (d_{\bf{h}})_{i}$, ${\bar{d}}_{2} = (d_{\bf{u}})_{i}$, $\nu_{1} = \tfrac{\zeta_{i}}{\gamma}$,
we have
\begin{eqnarray*}
q'(\bar{\mathbf{s}}, \zeta, \gamma; {\bar{d}}) &=& \frac{\gamma}{2} \sum_{i=1}^{rT} \phi'\big(\bar{\mathbf{h}}_{i}, \bar{\mathbf{u}}_{i};((d_{\bf{h}})_{i},(d_{\bf{u}})_{i})\big)\\
    &=& \frac{\gamma}{2}\sum_{i=1}^{rT}\phi'\big(\bar{\mathbf{h}}_{i}, \bar{\mathbf{u}}_{i};((d_{\bf{h}})_i, 0)\big) + \phi'_{i}\big(\bar{\mathbf{h}}_{i}, \bar{\mathbf{u}}_{i};(0,(d_{\bf{u}})_i)\big)\\
    &=& q'\big(\bar{\mathbf{s}}, \zeta, \gamma ; (0,{d}_{\bf{h}}, {0})\big)
    + q'\big(\bar{\mathbf{s}}, \zeta, \gamma ; (0, 0, {d}_{\bf{u}})\big).
\end{eqnarray*}
This, along with \cref{dir_g}, yields that
\begin{eqnarray*}
& & \mathcal{L}'(\bar{\mathbf{s}},\xi, \zeta, \gamma ; {d})\\
    &=&  \langle \nabla_{\mathbf{z}}g(\bar{\mathbf{s}}, \xi, \gamma), {d}_{\bf{z}} \rangle + \langle \nabla_{\mathbf{h}}g(\bar{\mathbf{s}}, \xi, \gamma), {d}_{\bf{h}} \rangle + \langle \nabla_{\mathbf{u}}g(\bar{\mathbf{s}}, \xi, \gamma), {d}_{\bf{u}} \rangle\\
    & & \quad\quad  + q'\big(\bar{\mathbf{s}}, \zeta, \gamma ; (0, {d}_{\bf{h}}, {0})\big)
    + q'\big(\bar{\mathbf{s}}, \zeta, \gamma ; (0, 0, {d}_{\bf{u}})\big)\\
   &=&  \mathcal{L}'(\bar{\mathbf{s}},\xi, \zeta, \gamma ; ({d}_{\bf{z}}, {0}, {0}))
    + \mathcal{L}'(\bar{\mathbf{s}},\xi, \zeta, \gamma ; ({0},{d}_{\bf{h}}, {0})) + \mathcal{L}'(\bar{\mathbf{s}},\xi, \zeta, \gamma ; ({0}, {0}, {d}_{\bf{u}})).
\end{eqnarray*}
Hence \cref{lemma:regular} holds.
\end{proof}

\section{Parameters for numerical experiments in section 5.4\label{subsec:hyperpare}}


The final selected learning rates for GDs and SGDs, as well as the clipping norm for GDC, are listed in \cref{tab:lr}.

\addtocounter{table}{1}
\begin{table}[H]
    \centering
    \scriptsize
\begin{tabular}{|c|c|c|c|c|c|c|}
\hline
      &                                                                         & He   & $\mathcal{N}(0, 10^{-3})$ & $\mathcal{N}(0, 10^{-1})$ & Glorot & LeCun \\ \hline
GD    & \bf{Synthetic dataset} ($T = 10$) & 1e-4 & 1e-3                    & 1e-4                  & 1      & 1     \\ \hline
      & \bf{Volatility of  S$\mathit{\&}$P index} & 1e-4 & 0.01                    & 0.01                  & 0.01   & 0.01  \\ \hline
      & \bf{Synthetic dataset} ($T = 500$)    & 0.01 & 0.01                     & 0.01                   & 1e-3   & 1e-3  \\ \hline
GDC   & \bf{Synthetic dataset} ($T = 10$)  & 1 (6)& 1e-4   (1)                 & 1e-4  (1)                & 1  (6)   & 1   (6)  \\ \hline
      &  \bf{Volatility of  S$\mathit{\&}$P index} & 1e-4 (3) & 0.01  (1)                  & 0.1   (1)                & 0.1 (4)   & 0.1 (1)   \\ \hline
      & \bf{Synthetic dataset} ($T = 500$)   & 1e-4 (1)& 0.01  (1)                   & 0.01  (4)                 & 0.01  (1.5) & 0.1  (0.5)  \\ \hline
GDNM & \bf{Synthetic dataset} ($T = 10$)  & 1e-3 & 1e-4                    & 1e-4                  & 1e-4   & 0.1   \\ \hline
      & \bf{Volatility of  S$\mathit{\&}$P index} & 1e-4 & 0.01                    & 0.01                  & 0.01   & 0.01  \\ \hline
      & \bf{Synthetic dataset} ($T = 500$)    & 0.01 & 0.01                     & 0.01                   & 0.01   & 0.01  \\ \hline
SGD   & \bf{Synthetic dataset} ($T = 10$)  & 0.1  & 0.1                     & 0.1                   & 0.1    & 0.1   \\ \hline
      & \bf{Volatility of  S$\mathit{\&}$P index} & 0.01 & 0.01                    & 0.01                  & 0.01   & 0.01  \\ \hline
      & \bf{Synthetic dataset} ($T = 500$)    & 0.01 & 1e-3                    & 0.01                   & 0.01    & 0.01   \\ \hline
Adam  & \bf{Synthetic dataset} ($T = 10$)  & 0.1  & 0.01                    & 0.01                  & 0.01   & 0.01  \\ \hline
      & \bf{Volatility of  S$\mathit{\&}$P index} & 0.01 & 0.01                    & 0.01                  & 0.01   & 0.01  \\ \hline
      & \bf{Synthetic dataset} ($T = 500$)    & 0.01 & 0.01                    & 0.01                  & 0.01   & 0.01  \\ \hline
\end{tabular}

    \caption{The learning rates for GDs and SGDs, and the clipping norm value for GDC (the second number in each cell for parameters)  under different initialization strategies.}
    \label{tab:lr}
\end{table}

\bibliographystyle{siamplain}

\end{document}